\newtheorem{example}{Example}
\newcommand{\rrVert}{\Vert}
\newcommand{\llVert}{\Vert}
\definecolor{pastelred}{rgb}{1.0, 0.74, 0.85}
\definecolor{electriccyan}{rgb}{0.0, 1.0, 1.0}
\begin{document}

\begin{frontmatter}
\title{Inference in Ising Models}
\runtitle{Inference in Ising Models}
%\thankstext{T1}{Footnote to the title with the ``thankstext'' command.}

\begin{aug}
\author{\fnms{Bhaswar~B.} \snm{Bhattacharya}\ead[label=e1]{bhaswar@wharton.upenn.edu}}
\and
\author{\fnms{Sumit} \snm{Mukherjee}\ead[label=e2]{sm3949@columbia.edu}}
\runauthor{Bhattacharya and Mukherjee}

\affiliation{University of Pennsylvania and Columbia University}

\address{Department of Statistics\\ 
University of Pennsylvania\\ Philadelphia, USA\\ \printead{e1}}

\address{Department of Statistics\\ Columbia University\\ New York, USA\\ \printead{e2}}
\end{aug}

\begin{abstract}
The Ising spin glass is a one-parameter exponential family model for
binary data with quadratic sufficient statistic. In this paper, we show
that given a single realization from this model, the maximum
pseudolikelihood estimate (MPLE) of the natural parameter is $\sqrt
{a_N}$-consistent at a point whenever the log-partition function has
order $a_N$ in a neighborhood of that point. This gives consistency
rates of the MPLE for ferromagnetic Ising models on general weighted
graphs in all regimes, extending the results of Chatterjee
(\textit{Ann. Statist.} \textbf{35} (2007) 1931--1946) where only
$\sqrt N$-consistency of the MPLE was shown. It
is also shown that consistent testing, and hence estimation, is
impossible in the high temperature phase in ferromagnetic Ising models
on a converging sequence of simple graphs, which include the
Curie--Weiss model. In this regime, the sufficient statistic is
distributed as a weighted sum of independent $\chi^2_1$ random
variables, and the asymptotic power of the most powerful test is
determined. We also illustrate applications of our results on synthetic
and real-world network data.
\end{abstract}

% KEYWORDS
%
\begin{keyword} %% abc + l.c.
\kwd{exponential family}
\kwd{graph limit theory}
\kwd{hypothesis testing}
\kwd{Ising model}
\kwd{pseudolikelihood estimation}
\kwd{spin glass}
\end{keyword}
\end{frontmatter}

%s1 #&#
\section{Introduction}\label{sec1}

The Ising spin glass is a discrete random field developed in
statistical physics
as a model for ferromagnetism \cite{ising}, and is now widely used in
statistics as a model for binary data with applications in spatial
modeling, image processing, and neural networks (cf. \cite
{spatialdata_book,green,hopfield} and the references therein). To
describe the model, suppose that the data is a vector of dependent $\pm
1$ random variables $\sigma=(\sigma_1, \sigma_2,\ldots, \sigma
_N)$, and the dependence among the coordinates of $\sigma$ is modeled
by a one-parameter exponential family where the sufficient statistic is
a quadratic form:
%
%e1.1 #&#
%
\begin{equation}
H_N(\tau)=\tau'J_N\tau= \sum
_{1\leq i, j\leq N}J_N(i, j)\tau_i
\tau_j \label{q}
\end{equation}
for any $\tau\in S_N:=\{-1, 1\}^N$ and an $N\times N$ symmetric matrix
$J_N$ with zeros on the diagonals. The elements of $J_N$ are denoted by
$J_N(i, j)=J_N(j, i)$, for $1\leq i< j\leq N$. Given any $\beta\geq
0$, the quadratic form (\ref{q}) defines a parametric family of
probability distributions on $S_N$:
%
%e1.2 #&#
%
\begin{equation}
\P_{\beta}(\sigma=\tau)=2^{-N} \exp\biggl\{\frac{1}{2}
\beta H_N(\tau)-F_N(\beta) \biggr\}, \label{pmf}
\end{equation}
where $F_N(\beta)$ is the \textit{log-partition function} which is
determined by the condition $\sum_{\tau\in S_N}\P_\beta\{\sigma
=\tau\}=1$, that is,
%
%e1.3 #&#
%
\begin{equation}
F_N(\beta):=\log\biggl\{\frac{1}{2^N}\sum
_{\tau\in S_N}e^{\frac
{1}{2}\beta H_N(\tau)} \biggr\}=\log\E_0
e^{\frac{1}{2}\beta
H_N(\sigma)}, \label{Z}
\end{equation}
where $\E_0$ denotes the expectation over $\sigma$ distributed as $\P
_0$, the uniform measure on $S_N$. The parameter $\beta=1/T$ is often
referred to as the \textit{inverse temperature}, so the high temperature
regime corresponds to small values of $\beta$. The family (\ref{pmf})
includes many famous statistical physics models: the usual
ferromagnetic Ising model on generals graphs, the
Sherrington--Kirkpatrick mean-field model \cite
{panchenko,talagrand,talagrandII}, and the Hopfield model for neural
networks \cite{hopfield}.

Estimating the parameter $\beta$ in \eqref{pmf}, given one
realization from the model, is extremely difficult using
likelihood-based methods because of the presence of an intractable
normalizing constant $F_N(\beta)$ in the likelihood. A variety of
numerical methods are known for approximately computing the likelihood
\cite{geyer_thompson}, but they are computationally expensive and very
little is known about the rate of convergence.

One alternative to using likelihood-based methods is to consider the
maximum pseudolikelihood estimator (MPLE) \cite
{besag_lattice,besag_nl}. Chatterjee \cite{Chatterjee} showed that
given a single spin configuration from the model \eqref{pmf}, the MPLE
$\hat\beta_N$ is $\sqrt N$-consistent at $\beta=\beta_0$,\footnote
{A sequence of estimators $\{\hat{\beta}_N\}_{N\geq1}$ is said to be
$a_N$-\textit{consistent} at $\beta=\beta_0$ if $a_N|\hat{\beta
}_N-\beta_0|=O_P(1)$, that is, $\limsup_{K\rightarrow\infty}\limsup
_{N\rightarrow\infty}\P_{\beta_0}(a_N|\hat{\beta}_N-\beta
_0|>K)=0$.} whenever $\liminf_{N\rightarrow\infty}\frac
{1}{N}F_N(\beta_0)>0$. However, in many popular models such as regular
graphs, random graphs, and dense graphs, the log-partition
function $F_N(\beta)=o(N)$ for certain ranges of $\beta$, and
Chatterjee's result does not tell us anything about the consistency of
the MPLE.

In this paper, we show that the MPLE is $\sqrt{a_N}$-consistent at
$\beta=\beta_0$, if the log-partition function has order $a_N$ in a
neighborhood of $\beta_0$ (Theorem~\ref{GEN}), for a sequence
$a_N\rightarrow\infty$. This gives the consistency rate of the MPLE
for all values of $\beta>0$ away from the critical points, and shows
that the rate of the MPLE undergoes phase transitions for Ising models
on various graphs ensembles (Corollaries \ref{dregular} and \ref
{erdosrenyirandom}). We also show that \textit{no} consistent test, and
hence no estimator, exists if the log-partition function remains
bounded (Theorem~\ref{genfixed}). As a consequence, consistent
estimation is impossible in the high temperature regime in
ferromagnetic Ising models on a converging sequence (in cut-metric as
defined by Lova\'sz and co-authors \cite
{graph_limits_I,graph_limits_II,lovasz_book}) of graphs
(Theorem~\ref{dense_testing}). This strengthens previous results of
Comets and Gidas \cite{comets_mle} and Chatterjee \cite{Chatterjee}
where the MLE and the MPLE was, respectively, shown to be inconsistent
for $0\leq\beta< 1$ in the Curie--Weiss model, which corresponds to
taking $J_N(i, j)=1/N$, for all $1\leq i < j \leq N$. Finally, using
the emerging theory of graph limits \cite
{graph_limits_I,graph_limits_II,lovasz_book}, the limiting distribution
of the sufficient statistic $H_N(\sigma)$, and the asymptotic power of
the most powerful test are derived for dense graphs in the
high temperature regime (Theorem~\ref{dense}).

%%%%%%%%%%%%%%%%%%%%%%%%%%%%%%%%%%%%%%%%%%%%%%%%%%%%%%%%%%%%%%%%%%%%%%%%%%%%%%%%%
%\begin{figure*}
%%
%\centering
%\begin{minipage}[l]{1.0\textwidth}
%\centering
%\includegraphics[width=3.5in]
% {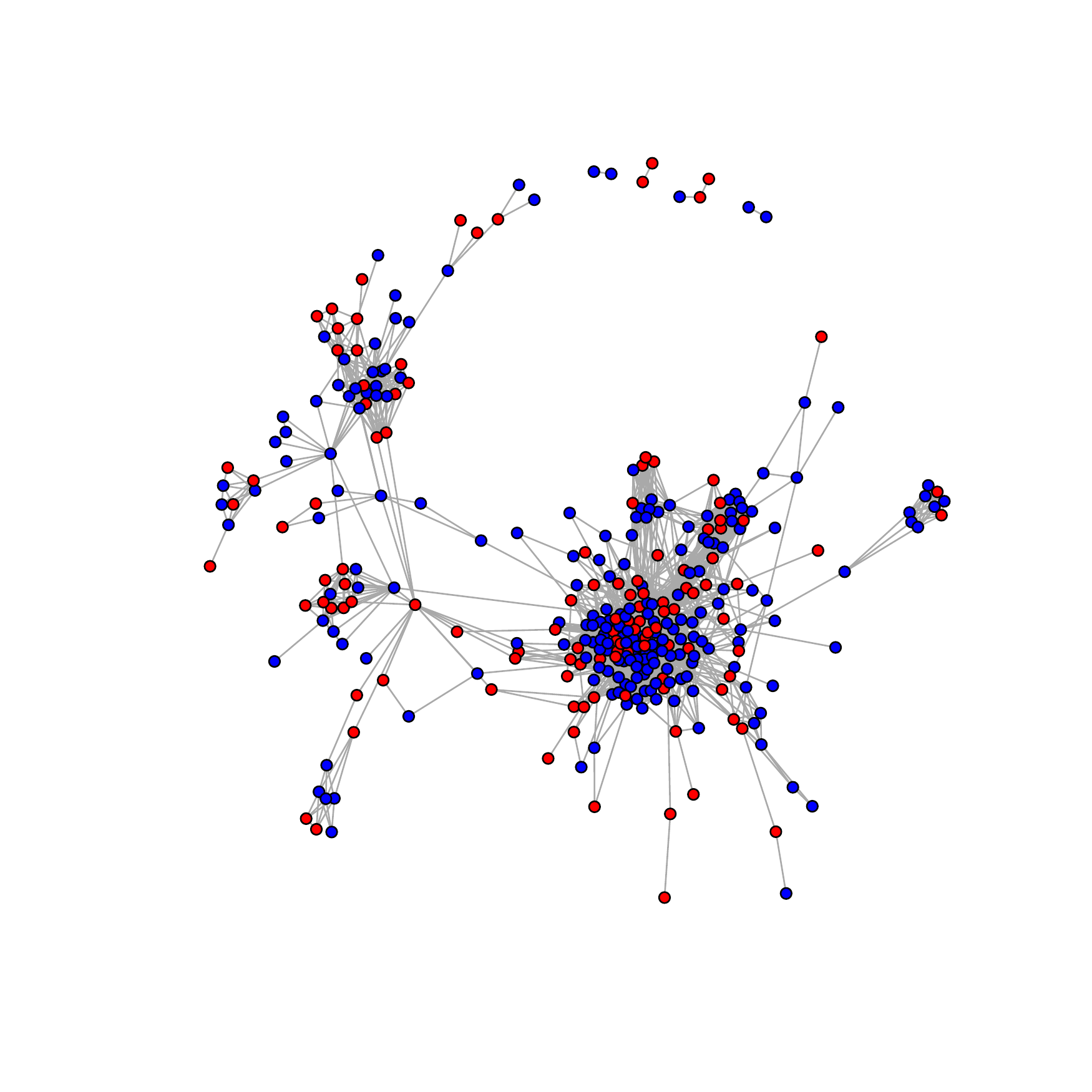}
%\end{minipage}
%
%\caption{\footnotesize{A Facebook friendship-network with 333 nodes
%and 2519 edges, where the vertices are colored red or blue according
%to gender. The MPLE of $\beta$ for the Ising model on this graph is
%0.853. The analysis of this dataset is given in Section~
%\ref{sec:datasets}}.}
%\label{fig:fb1}
%\end{figure*}
%%%%%%%%%%%%%%%%%%%%%%%%%%%%%%%%%%%%%%%%%%%%%%%%%%%%%%%%%%%%%%%%%%%%%%%%%%%%%%%%%%%%%%%%%%%%%%%%%%%%%%%%%%%

While proving the consistency of the MPLE, we show that the asymptotic
order of the sufficient statistic $H_N(\sigma)$ is same as the order
of the log-partition function for general matrices $J_N$; a result
which appears to be new and might be of independent interest. More
precisely, the sequence of random variables $\frac{1}{a_N}H_N(\sigma
)$ is asymptotically tight under $\P_{\beta_0}$, and the limiting
distribution (if any) is non-zero when the log-partition function has
order $a_N$ in a neighborhood of $\beta_0$ (Lemma~\ref{lem1}).
Moreover, simple bounds for matrices $J_N$ with non-negative entries
provide the correct order of the log-partition function in the high
temperature regime for a wide class of Ising models (Lemma~\ref{bound}).

Finally, we illustrate the usefulness of the MPLE and the applicability
of our results on a real dataset: In Section~\ref{sec:datasets}, we
study the effect of gender among friends in two Facebook
friendship-networks from the Stanford Large Network Dataset (SNAP) collection.

%%%%%%%%%%%%%%%%%%%%%%%%%%%%%%%%%%%%%%%%%%%%%%%%%%%%%%%%%%%%%%%%%%%%%%%%%%%%%%%%%
%\begin{figure*}
%%
%\centering
%\begin{minipage}[c]{1.0\textwidth}
%\centering
%\includegraphics[height=3.0in, width=3.5in]
% {2012_Graph.pdf}
%
%\end{minipage}
%\caption{\footnotesize{Ising model on the US neighborhood graph for
%the 2012 presidential election: the vertices are the states and there
%is an edge between two states which share a border. The color of a
%vertex is blue or red depending on whether the state was Democratic or
%Republican in the 2012 presidential elections, respectively. The MPLE
%of the Ising model on this graph is $0.9611$. The analysis of this
%dataset is given in Section~\ref{sec:statesdataset}.}}
%\label{fig:states2012}
%\end{figure*}
%%%%%%%%%%%%%%%%%%%%%%%%%%%%%%%%%%%%%%%%%%%%%%%%%%%%%%%%%%%%%%%%%%%%%%%%%%%%%%%%%%%%%%%%%%%%%%%%%%%%%%%%%%%

Another active area of research is high-dimensional structure
estimation in a sparse Ising model, where the goal is to consistently
estimate the underlying matrix $J_N$, under certain structural
constraints from i.i.d. samples from the model (see \cite
{structure_learning,bresler,highdim_ising,ising_nonconcave} and the
references therein). This is in contrast with the present work, where
the matrix $J_N$ is known and we estimate the natural parameter and its
error rate given a \textit{single} realization from the model.

%s1.1 #&#
\subsection{Organization} The rest of the paper is organized as
follows: The consistency of the MPLE and general inconsistency results
are described in Section~\ref{pl_estimate}. Applications of these
results to various graph ensembles including regular graphs, random
graphs, and general weighted graphs, are explained in Section~\ref
{examples}. Theorems \ref{GEN} and \ref{genfixed} are proved
in Sections \ref{pfmain} and \ref{pfgenfixed}, respectively.
The proofs of Corollaries \ref{dregular} and \ref{erdosrenyirandom} are
given in Section~\ref{pfapplication}. The results on converging
sequence of graphs are in Section~\ref{pfdense}. The analysis
of the Facebook dataset is given in Section~\ref{sec:datasets}.

%s2 #&#
\section{Consistency of the MPLE}
\label{pl_estimate}

The maximum pseudolikelihood estimator (MPLE), introduced by Besag
\cite{besag_lattice,besag_nl}, can be conveniently used to approximate
the joint distribution of $\sigma\sim\P_\beta$ that avoids
calculations with the normalizing constant.

%de2.1 #&#
%
\begin{defn}Given a random vector $(X_1, X_2, \ldots, X_N)$ whose
joint distribution is parame\-trized by a parameter $\beta\in\R$, the
MPLE of $\beta$ is defined as
%
%e2.1 #&#
%
\begin{equation}
\hat\beta_N:=\arg\max\prod_{i=1}^N
f_i(\beta, X), \label{mple}
\end{equation}
where $f_i(\beta, X)$ is the conditional probability density of $X_i$
given $(X_j)_{j \ne i}$.
\end{defn}

Given $\sigma\sim\P_\beta$ from the model (\ref{pmf}), the
conditional density of $\sigma_i$, given $(\sigma_j)_{j \ne i}$ can
be easily computed. To this end, given $\tau\in S_N$, define the
function $L_\tau:[0, \infty)\rightarrow\R$ as
%
%e2.2 #&#
%
\begin{equation}
L_\tau(x):=\frac{1}{N}\sum_{i=1}^N
m_i(\tau) \bigl(\tau_i- \tanh\bigl(x m_i(\tau)
\bigr)\bigr), \label{s}
\end{equation}
where
%
%e2.3 #&#
%
\begin{equation}
m_i(\tau):=\sum_{j=1}^NJ_N(i,j)
\tau_j. \label{mi}
\end{equation}
Note that $m_i(\tau)$ does not depend on $\tau_i$ since the diagonal
element $J_N(i, i)=0$. Interpreting $\tanh(\pm\infty)=\pm1$, the
function $L_{\tau}$ can be extended to $[0, \infty]$ by defining
$L_\tau(\infty):=\frac{1}{N}\sum_{i=1}^N(m_i(\tau)\tau
_i-|m_i(\tau)|)$. Then it is easy to verify that (see Chatterjee \cite
{Chatterjee}, Section~1.2) $\frac{1}{N}\frac{\partial}{\partial
\beta}\sum_{i=1}^N \log f_i(\beta, \tau)= L_{\tau}(\beta)$, and the
function $L_{\tau}(\beta)$ is a decreasing function of $\beta$.
Therefore, the MPLE for $\beta$ in the model (\ref{pmf}) is
%
%e2.4 #&#
%
\begin{equation}
\hat\beta_N(\sigma):=\inf\bigl\{x\geq0: L_\sigma(x)=0\bigr
\}, \label{beta}
\end{equation}
where $\sigma\sim\P_\beta$ is a random element from (\ref{pmf}).
Hereafter, we suppress the dependence on $\sigma$ and denote by $\hat
\beta_N:=\hat\beta_N(\sigma)$ the MPLE of $\beta$.
%
%In this paper we stu\mathrm d y the consistency properties of the MPLE
%estimator $\beta_N$. To this end, we have the following definition:
%
%\begin{defn}
%\label{defn:const}
%\label{consistency}
%\end{equation}
%\end{defn}

Consistency results for the MPLE in Ising models are known in the case
of lattices \cite{comets_exp,gidas,guyon,pickard}, complete graphs
\cite{Chatterjee}, and spatial point processes \cite{jensen_moller}.
However, for general processes where the dependence is neither local
nor mean-field, it is very difficult to prove consistency results for MPLE.
%even though it often performs well in practice.
In a major breakthrough, Chatterjee \cite{Chatterjee} developed a
remarkable technique using exchangeable pairs and showed \cite
{Chatterjee}, Theorem~1.1, that the MPLE $\{\hat{\beta}_N\}_{N\geq
1}$, given a
single realization $\sigma\in S_N$ from (\ref{pmf}), is a $\sqrt
N$-consistent estimator at $\beta=\beta_0>0$, whenever $\sup
_N\llVert J_N\rrVert <\infty$\footnote{For any $N\times N$
symmetric matrix $A$,
denote by $\llVert A\rrVert =\sup_{x\in\R^N}\frac{\llVert A
x\rrVert _2}{\llVert x\rrVert _2}$ the
operator norm of $A$.} and
%
%e2.5 #&#
%
\begin{equation}
\lim\inf_{N\rightarrow\infty}\frac{1}{N}F_N(
\beta_0)>0. \label{ptN}
\end{equation}

%%%%%%%%%%%%%%%%%%%%%%%%%%%%%%%%%%%%%%%%%%%%%%%%%%%%%%%%%%%%%%%%%%%%%%%%%%%%%%%%%

%
%\begin{figure*}[h]
%\centering
%\begin{minipage}[c]{1.0\textwidth}
%\centering
%\includegraphics[width=2.8in,height=2.8in]
% {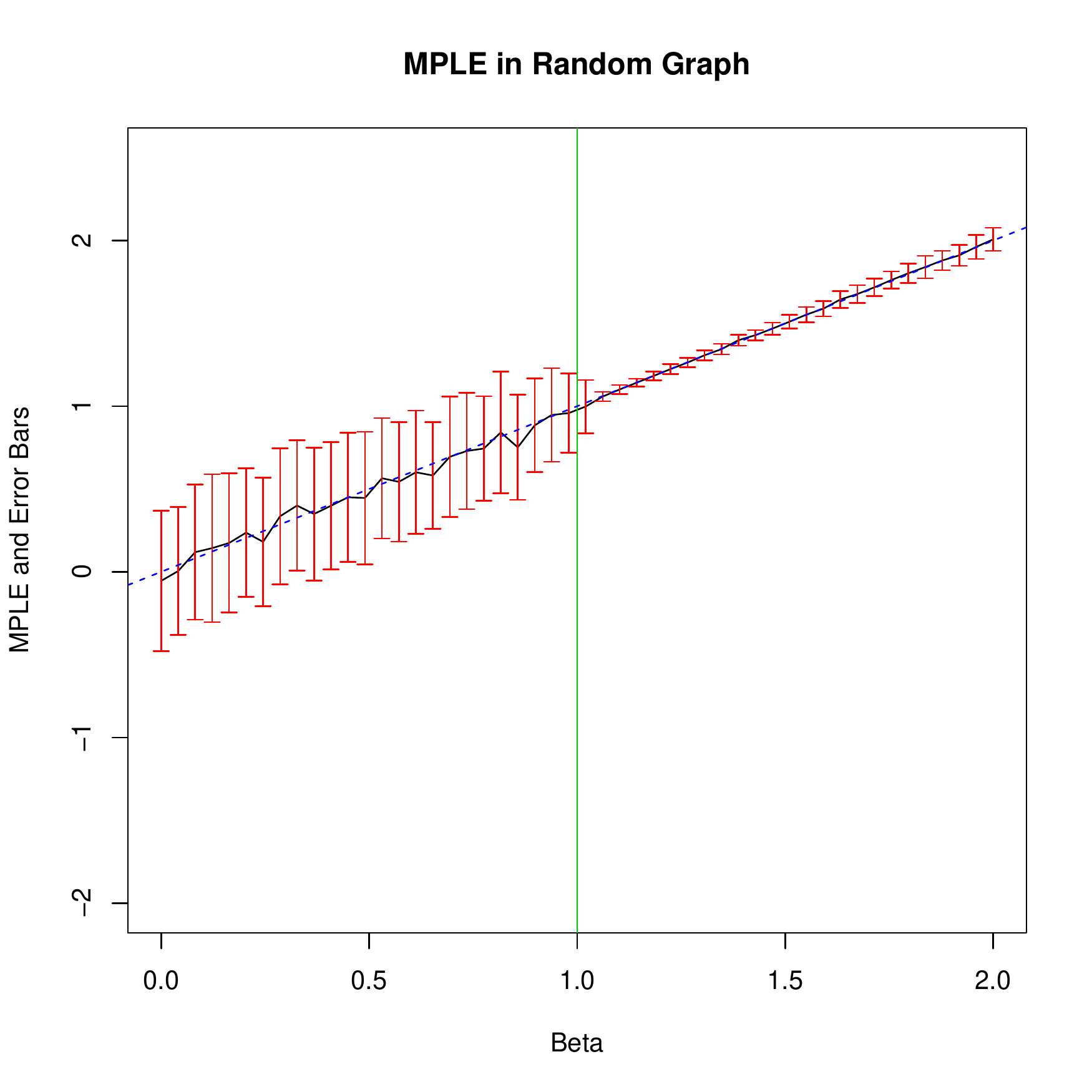}
%\end{minipage}
%%\begin{minipage}[c]{0.49\textwidth}
%%\centering
%%\includegraphics[width=3.0in]
%% {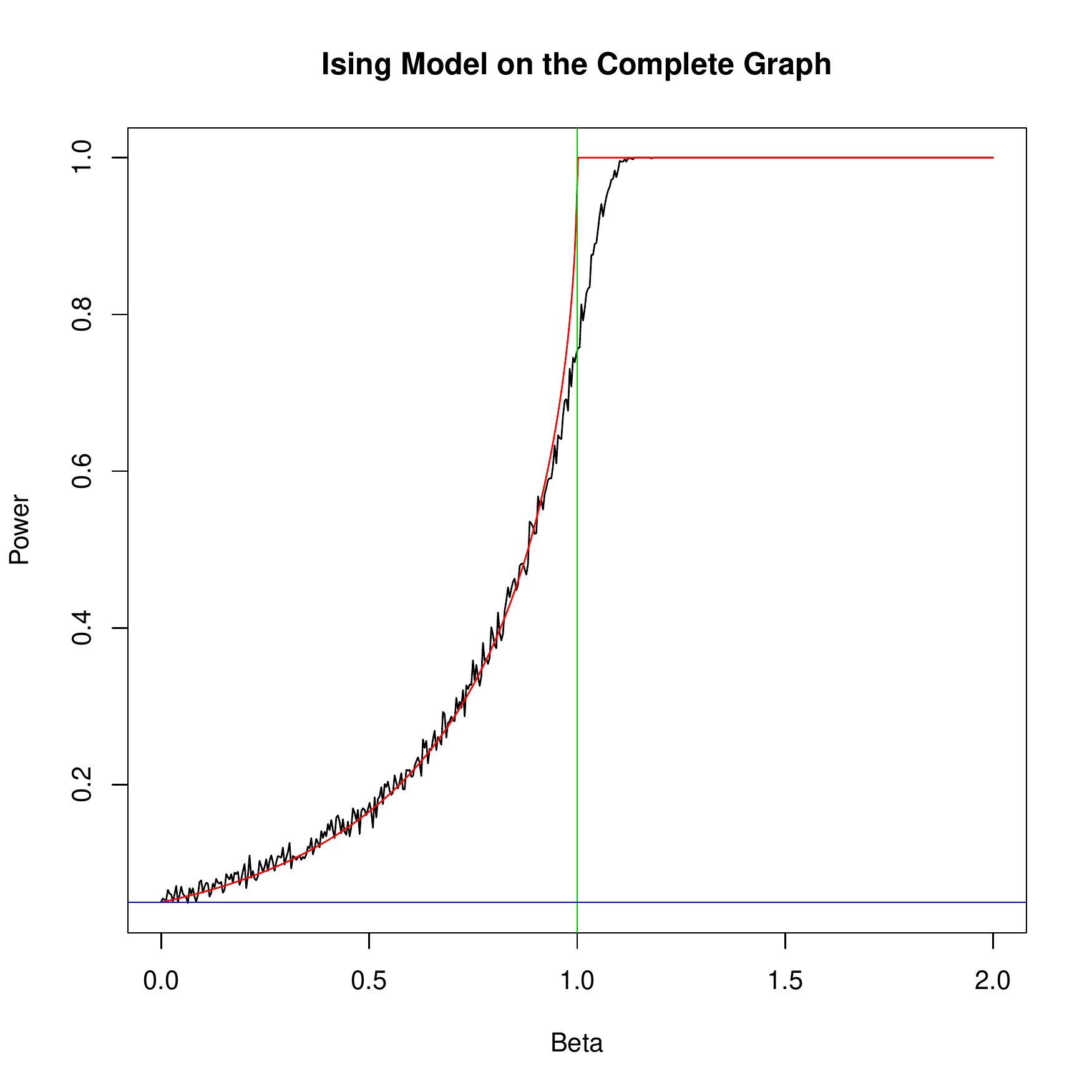}
%%\end{minipage}
%
%\end{figure*}
%%%%%%%%%%%%%%%%%%%%%%%%%%%%%%%%%%%%%%%%%%%%%%%%%%%%%%%%%%%%%%%%%%%%%%%%%%%%%%%%%%%%%%%%%%%%%%%%%%%%%%%%%%%

To the best of our knowledge, all results regarding MPLE $\{\hat{\beta
}_N\}_{N\geq1}$ are in the regime where it is $\sqrt N$-consistent.
However, in many examples such as the Ising model on dense graphs,
$d(N)$-regular graphs with $d(N)\rightarrow\infty$, and
Erd\H os--R\'enyi graphs $G(N, p(N))$, with $\frac{\log N}{N}\ll p(N) \ll1$, the
log-partition function $F_N(\beta)=o(N)$ for certain ranges for $\beta
$. In these cases, the hypothesis \eqref{ptN} is not satisfied, and
Chatterjee's result is not applicable for deriving the consistency of
the MPLE. The following theorem (see Section~\ref{pfgen} for proof)
shows that the consistency of the MPLE at a point is governed by the
order of the log-partition function in a neighborhood of that point.
This generalizes the result of Chatterjee \cite{Chatterjee} giving the
rate of consistency of the MPLE for all values $\beta$ (at all
temperatures) away from the critical points.

%th2.1 #&#
%
\begin{thm}\label{GEN}
Let $\sup_{N\ge1}\llVert J_N\rrVert <\infty$, and $\beta_0>0$ be
fixed. Suppose
$\{a_N\}_{N\ge1}$ is a sequence of positive reals diverging to $\infty
$ such that for some $\delta>0$ we have
%
%e2.6 #&#
%
\begin{align}
0<\liminf_{N\rightarrow\infty}\frac{1}{a_N}F_N(
\beta_0-\delta)\le\limsup_{N\rightarrow\infty}
\frac{1}{a_N}F_N(\beta_0+\delta)<\infty.
\label{maincond}
\end{align}
Moreover, assume that the following conditions hold:
\begin{enumerate}[(a)]

\item[(a)]$\limsup_{K\rightarrow\infty}\limsup_{N\rightarrow
\infty}\frac{1}{a_N}\E_{\beta_0} (\sum_{i=1}^N |m_i(\sigma
)|\cdot\pmb1\{|m_i(\sigma)|>K\} )=0$, where $m_i(\sigma)$ is
as defined in (\ref{mi}).

\item[(b)]$\limsup_{N\rightarrow\infty}\frac{1}{a_N}\sum
_{i,j=1}^N J_N(i, j)^2<\infty$.
\end{enumerate}
Then the MPLE $\{\hat{\beta}_N\}_{N\geq1}$ for the model (\ref
{pmf}) is a $\sqrt{a_N}$-consistent sequence of estimators for $\beta
=\beta_0$.
\end{thm}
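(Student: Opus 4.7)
The plan is to exploit the first-order optimality condition for the pseudo-likelihood. Since $L_\sigma(\beta)$ is strictly decreasing in $\beta$ and $\hat\beta_N$ is defined by $L_\sigma(\hat\beta_N)=0$, the mean value theorem gives
\[
|\hat\beta_N-\beta_0| \;=\; \frac{|L_\sigma(\beta_0)|}{|L_\sigma'(\xi_N)|}
\]
for some (random) $\xi_N$ between $\beta_0$ and $\hat\beta_N$. It therefore suffices to establish (i) an upper bound $|L_\sigma(\beta_0)|=O_P(\sqrt{a_N}/N)$ on the score, and (ii) a matching lower bound $|L_\sigma'(\beta)|\ge c\,a_N/N$ (for some constant $c>0$) that holds with probability tending to one, uniformly for $\beta$ in some deterministic neighborhood of $\beta_0$. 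The ratio then is $O_P(1/\sqrt{a_N})$, which is the claim.

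For (i), since $m_i(\sigma)$ does not depend on $\sigma_i$ and the conditional law of $\sigma_i$ given the remaining coordinates is $\tanh(\beta_0 m_i(\sigma))$, each summand of $L_\sigma(\beta_0)=\frac{1}{N}\sum_{i=1}^N m_i(\sigma)(\sigma_i-\tanh(\beta_0 m_i(\sigma)))$ is conditionally centered. An exchangeable-pair second moment computation in the spirit of Chatterjee~\cite{Chatterjee} controls the cross terms and yields $\E_{\beta_0}L_\sigma(\beta_0)^2\le C N^{-2}\sum_{i,j=1}^N J_N(i,j)^2$, which by condition~(b) is $O(a_N/N^2)$, and Chebyshev gives the score bound.

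For (ii), differentiating~(\ref{s}) gives $L_\sigma'(\beta)=-\frac{1}{N}\sum_i m_i(\sigma)^2\operatorname{sech}^2(\beta m_i(\sigma))$. The starting point is the identity $F_N'(\beta_0)=\tfrac{1}{2}\E_{\beta_0}H_N(\sigma)$; the convexity of $F_N$ combined with the two-sided growth hypothesis~\eqref{maincond} forces $\E_{\beta_0}H_N(\sigma)\ge c_1 a_N$ for some $c_1>0$, which is precisely the content of Lemma~\ref{lem1}. Conditioning coordinate by coordinate gives $\E_{\beta_0}H_N(\sigma)=\sum_i\E_{\beta_0}[m_i(\sigma)\tanh(\beta_0 m_i(\sigma))]$, and using the pointwise bound $m\tanh(\beta_0 m)\le \beta_0 m^2\pmb1\{|m|\le K\}+|m|\pmb1\{|m|>K\}$ together with condition~(a) to render the tail piece smaller than any fixed fraction of $a_N$ for $K$ large, one deduces $\E_{\beta_0}\sum_i m_i(\sigma)^2\pmb1\{|m_i(\sigma)|\le K\}\ge c_2 a_N$. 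Since $\operatorname{sech}^2(\beta m_i)$ is bounded below by a positive constant on $\{|m_i|\le K\}$ whenever $\beta$ lies in a sufficiently small neighborhood of $\beta_0$, this gives the expectation-level bound $\E_{\beta_0}\sum_i m_i(\sigma)^2\operatorname{sech}^2(\beta m_i(\sigma))\ge c_3 a_N$. A second-moment / concentration argument, again using~(b) to bound the variance of $\sum_i m_i(\sigma)^2$, upgrades this to a high-probability lower bound uniformly in $\beta$ over the neighborhood.

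The hard part will be step~(ii): transferring the expectation-level bound on the pseudo-likelihood ``information'' $\sum_i m_i(\sigma)^2\operatorname{sech}^2(\beta m_i(\sigma))$ into a probability statement that is uniform in $\beta$ and then locating the random point $\xi_N$ inside that neighborhood. The random localization is handled by first showing via the preceding bounds that $\hat\beta_N$ lies in the deterministic neighborhood with probability tending to one and then invoking the uniform derivative bound on that event. All three hypotheses enter in concert here: \eqref{maincond} produces $\E_{\beta_0}H_N(\sigma)$ of order $a_N$ through the convexity of $F_N$, condition~(a) makes the truncation argument uniform as $K\to\infty$, and condition~(b) supplies the variance control needed both for the score fluctuation in step~(i) and for the concentration of the Fisher information in step~(ii).
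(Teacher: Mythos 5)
Your overall architecture (bound the score $L_\sigma(\beta_0)$ from above, bound the curvature from below, take the ratio) matches the paper, and steps like conditional centering of the score and the lower bound $\E_{\beta_0}H_N(\sigma)\gtrsim a_N$ via convexity of $F_N$ are exactly the paper's Lemmas~\ref{lem1} and~\ref{lem0}. But there are two concrete gaps.

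First, the claimed score bound $\E_{\beta_0}L_\sigma(\beta_0)^2 \le C N^{-2}\sum_{i,j}J_N(i,j)^2$ is false in general, so condition~(b) alone cannot deliver step~(i). Take the Curie--Weiss model at $\beta_0>1$: there $\sum_{i,j}J_N(i,j)^2\to1$, so your bound would give $\E L_\sigma^2=O(N^{-2})$, yet the ``diagonal'' contribution alone is $\tfrac{1}{N^2}\sum_i\E\,m_i(\sigma)^2\operatorname{sech}^2(\beta_0 m_i(\sigma))$, which is $\Theta(1/N)$ since $m_i(\sigma)$ concentrates at $\pm m^*\ne0$. The correct estimate is $\E L_\sigma^2=O(a_N/N^2)$ (here $a_N=N$), and producing it requires the log-partition hypothesis~\eqref{maincond}: the exchangeable-pair expansion produces a term controlled by $\E_{\beta_0}\sum_i m_i(\sigma)\tanh(\beta_0 m_i(\sigma))=2F_N'(\beta_0)=O(a_N)$ (the paper's Lemma~\ref{lem0}), and only the remaining cross terms are controlled by condition~(b). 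You are missing this input.

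Second, the proposed concentration of $\sum_i m_i(\sigma)^2\pmb1\{|m_i(\sigma)|\le K\}$ ``using~(b) to bound the variance'' is not supported: $\sum_i m_i(\sigma)^2=\sigma'J_N^2\sigma$ can have variance of order $a_N^2$ under the Ising law (it does not follow from~(b) that it is $o(a_N^2)$), so Chebyshev would give only an $O(1)$ bound. The paper sidesteps this by a purely algebraic high-probability argument: on the event where simultaneously $H_N(\sigma)\ge 3\varepsilon\beta_0 a_N$ (Lemma~\ref{lem1}(a)), $|L_\sigma(\beta_0)|\le K_1\sqrt{a_N}/N$ (Chebyshev from Lemma~\ref{2moment}), and $\sum_i|m_i(\sigma)|\pmb1\{|m_i(\sigma)|>K_2\}\le\varepsilon\beta_0 a_N$ (Markov from condition~(a)), one has the deterministic chain
\begin{equation*}
\beta_0\sum_i m_i(\sigma)^2\pmb1\{|m_i(\sigma)|\le K_2\}+\varepsilon\beta_0 a_N
\;\ge\;\sum_i m_i(\sigma)\tanh\bigl(\beta_0 m_i(\sigma)\bigr)
\;=\;H_N(\sigma)-N L_\sigma(\beta_0),
\end{equation*}
from which the desired lower bound follows directly on that event (Lemma~\ref{derivative}). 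No concentration for $\sum_i m_i^2$ is needed.

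Finally, your MVT plus localization scheme for step~(ii) can be made rigorous but is strictly more work than what the paper does: rather than locating the random intermediate point $\xi_N$, the paper integrates $|L'_\sigma(\beta)|=\tfrac1N\sum_i m_i^2\operatorname{sech}^2(\beta m_i)$ over $[\beta_0\wedge\hat\beta_N,\beta_0\vee\hat\beta_N]$, uses $\operatorname{sech}^2(\beta m_i)\ge\operatorname{sech}^2(K_2\beta)$ on the truncated set, and evaluates the exact antiderivative to arrive at $\sqrt{a_N}\,|\tanh(K_2\hat\beta_N)-\tanh(K_2\beta_0)|=O_P(1)$, which is valid \emph{globally} in $\hat\beta_N$ and hence needs no preliminary consistency step. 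If you stick with MVT, you must first show $\hat\beta_N$ enters a fixed neighborhood of $\beta_0$ with high probability (which itself requires the derivative lower bound over a deterministic interval and then an intermediate-value argument).
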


Conditions (a) and (b) are technical requirements arising out of the
proof technique, which ensure that the main contributions come from
$m_i(\sigma)$ that are small, and on average the entries in $J_N$ are
not too large compared to $a_N$. The proof of the result is given in
Section~\ref{pftechnicallemmas} (with technical lemmas proved in
Appendix \ref{pfgenlemmas}). The proof is organized as follows: Using
the two conditions of the theorem, Lemma~\ref{2moment} shows that $\E
_{\beta_0} (L_\sigma(\beta_0)^2)=O(a_N/N^2)$, which implies that
$L_\sigma(\beta_0)$ is small with high probability. To derive the
rate of consistency of the pseudo-likelihood, it thus suffices to get a
lower bound of the derivative $L'_\sigma(\beta)$. Again invoking the
two conditions of Theorem~\ref{GEN}, in Lemma~\ref{pflemderivative}
we derive a lower bound on
\[
\sum_{i=1}^N m_i(
\sigma)^2 \pmb1\bigl\{\bigl|m_i(\sigma)\bigr| \leq K\bigr\}
\]
for $K$ fixed. This translates into the desired lower bound on the
derivative $L'_\sigma(\beta)$ using which the proof of the theorem is
then completed.

The conditions of the theorem are satisfied in most commonly used
models (see Section~\ref{examples}). Moreover, the result of
Chatterjee \cite{Chatterjee}, Theorem~1.1, is an immediate corollary of
Theorem~\ref{GEN} (refer to Section~\ref{pfcorchatterjee} for the proof).

%co2.2 #&#
%
\begin{cor}[({\cite{Chatterjee}, Theorem~1.1})]
\label{sc}
Let $\sup_{N\ge1}\llVert J_N\rrVert <\infty$ and $\beta_0>0$ be
such that (\ref
{ptN}) holds. Then the sequence of estimators $\{\hat{\beta}_N\}
_{N\geq1}$ is $\sqrt{N}$ consistent for $\beta=\beta_0$.
\end{cor}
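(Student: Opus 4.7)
The plan is to apply Theorem~\ref{GEN} with the choice $a_N = N$; the task reduces to verifying its three hypotheses from $\sup_{N\ge 1}\|J_N\| < \infty$ and \eqref{ptN}.

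I would first check the two-sided bound \eqref{maincond}. The upper bound at $\beta_0+\delta$ is immediate: the pointwise estimate $|H_N(\tau)| \le \|J_N\|\cdot\|\tau\|_2^2 = N\|J_N\|$ on $S_N$, together with the definition \eqref{Z}, yields $F_N(\beta_0+\delta) \le \tfrac12(\beta_0+\delta) N \sup_{N} \|J_N\| = O(N)$. For the lower bound at $\beta_0-\delta$ I would use convexity of $F_N$: its derivative $F_N'(\beta) = \tfrac12 \E_\beta H_N(\sigma)$ is non-decreasing in $\beta$ and satisfies $|F_N'(\beta)| \le \tfrac12 N\|J_N\|$ by the same pointwise bound. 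Hence
\[
F_N(\beta_0) - F_N(\beta_0-\delta) \;=\; \int_{\beta_0-\delta}^{\beta_0} F_N'(s)\,ds \;\le\; \delta F_N'(\beta_0) \;\le\; C\delta N,
\]
so $\liminf_N F_N(\beta_0-\delta)/N \ge \liminf_N F_N(\beta_0)/N - C\delta$, which is strictly positive once $\delta$ is small enough by \eqref{ptN}.

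Conditions (a) and (b) both follow from the single deterministic estimate
\[
\sum_{i=1}^N m_i(\tau)^2 \;=\; \tau' J_N^2\, \tau \;\le\; \|J_N\|^2\, \|\tau\|_2^2 \;=\; N\|J_N\|^2 \qquad \text{for all } \tau \in S_N.
\]
Condition (b) is immediate, since $\sum_{i,j} J_N(i,j)^2 = \operatorname{tr}(J_N^2) \le N \|J_N\|^2$ is $O(N)$. For condition (a) I would use the Chebyshev-type truncation
\[
\sum_{i=1}^N \bigl|m_i(\sigma)\bigr|\,\pmb1\bigl\{\bigl|m_i(\sigma)\bigr|>K\bigr\} \;\le\; \frac{1}{K}\sum_{i=1}^N m_i(\sigma)^2 \;\le\; \frac{N\|J_N\|^2}{K};
\]
dividing by $a_N = N$ and taking expectation under $\P_{\beta_0}$ yields a bound of $\|J_N\|^2/K$ uniformly in $N$, which vanishes as $K\to\infty$ because $\sup_N\|J_N\| < \infty$.

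With all three hypotheses verified, Theorem~\ref{GEN} delivers the $\sqrt{N}$-consistency of $\hat\beta_N$ at $\beta_0$. The only step requiring any thought is the left-neighborhood lower bound in \eqref{maincond}: the assumption \eqref{ptN} is a pointwise statement, and I extend it to $[\beta_0-\delta,\beta_0]$ via convexity of $F_N$ combined with the Lipschitz bound $|F_N'| \le \tfrac12 N\|J_N\|$. Everything else is an elementary consequence of $\sup_N \|J_N\| < \infty$.
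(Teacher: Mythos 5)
Your proof is correct and follows essentially the same route as the paper: take $a_N=N$, verify condition (a) via the Chebyshev truncation $\sum_i |m_i|\pmb1\{|m_i|>K\}\le \frac1K\sum_i m_i^2=\frac1K\tau'J_N^2\tau\le N\|J_N\|^2/K$, verify (b) via $\sum_{i,j}J_N(i,j)^2=\operatorname{tr}(J_N^2)\le N\|J_N\|^2$, and extend the pointwise hypothesis \eqref{ptN} to the interval $[\beta_0-\delta,\beta_0+\delta]$ using the derivative bound $F_N'(\beta)=\tfrac12\E_\beta H_N(\sigma)\le\tfrac12 N\|J_N\|$. The only cosmetic difference is that you invoke convexity of $F_N$ to write $\int_{\beta_0-\delta}^{\beta_0}F_N'\le\delta F_N'(\beta_0)$ before applying the Lipschitz bound on $F_N'$; the paper skips this intermediate step and integrates the uniform bound $F_N'\le\tfrac{M}{2}N$ directly, but both give the same estimate.
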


%re2.1 #&#
%
\begin{remark}Condition (\ref{maincond}) in the Theorem~\ref{GEN}
demands the right order of the log-partition function in a small
neighborhood around the point $\beta_0$. This avoids the critical
points, where the order of the log-partition function (and its
derivative) undergoes a sharp transition. It follows from the proof of
Theorem~\ref{GEN} that the following (possibly slightly weaker)
condition works as well instead of \eqref{maincond}:
\[
0<\lim_{\delta\rightarrow0}\liminf_{N\rightarrow\infty}
\frac
{1}{a_N}F_N'(\beta_0-\delta)\le
\limsup_{N\rightarrow\infty}\frac
{1}{a_N}F_N'(
\beta_0)<\infty.
\]
However, for most of the applications estimates of the log-partition
function are more readily available. Thus, the sufficient conditions
are stated in terms of the log-partition function instead of its derivative.
\end{remark}

Note that Theorem~\ref{GEN} does not apply to the case $F_N(\beta
_0)=O(1)$. Next, we show that if $F_N(\beta_0)=O(1)$, then there is no
sequence of estimators which consistently estimates $\beta_0$. In
fact, we show that even testing is impossible in this regime: Given a
single spin-configuration $\sigma\in S_N$ from (\ref{pmf}), there
exists \textit{no} sequence of consistent tests\footnote{A sequence of
test functions $\phi_N: S_N\rightarrow\{0, 1\}$ is said to be {\it
consistent} for the testing problem (\ref{testingbeta12}) if $\lim
_{N\rightarrow\infty}\E_{\beta_1}\phi_N=0$ and $\lim
_{N\rightarrow\infty}\E_{\beta_2}\phi_N=1$.
} for the hypothesis testing problem:
%
%e2.7 #&#
%
\begin{equation}
H_0: \beta=\beta_1 \quad\text{versus}\quad H_1:
\beta=\beta_2. \label{testingbeta12}
\end{equation}
This is summarized in the following theorem (see Section~\ref
{pfgenfixed} for proof):
%
%th2.3 #&#
%
\begin{thm}\label{genfixed}
Let $\sup_{N\ge1}\llVert J_N\rrVert <\infty$, and $\beta_0>0$ be
fixed. Suppose
%
%e2.8 #&#
%
\begin{align}
\limsup_{N\rightarrow\infty} F_N(\beta_0)<\infty.
\label{maincondfixed}
\end{align}
Then for $0\leq\beta_1<\beta_2\leq\beta_0$, there exists no
consistent sequence of tests for the testing problem~(\ref
{testingbeta12}). In particular, there exists no consistent sequence of
estimators for $\beta$ in the interval $[0,\beta_0]$.
\end{thm}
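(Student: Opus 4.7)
The plan is to prove directly that the total variation distance $\|\P_{\beta_1} - \P_{\beta_2}\|_{TV}$ stays bounded away from $1$ as $N\to\infty$ whenever $0 \le \beta_1 < \beta_2 \le \beta_0$. This immediately rules out any consistent sequence of tests for \eqref{testingbeta12}. The nonexistence of a consistent estimator on $[0,\beta_0]$ will then follow by a routine reduction: if $\{\hat\beta_N\}$ were consistent, the thresholding rule $\phi_N := \pmb1\{\hat\beta_N > (\beta_1+\beta_2)/2\}$ would be a consistent test for every pair $\beta_1 < \beta_2$ in $[0,\beta_0]$, contradicting the first part.

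The concrete tool will be the Bhattacharyya (Hellinger) affinity
\[
\rho_N(\beta_1,\beta_2) := \sum_{\tau \in S_N} \sqrt{\P_{\beta_1}(\sigma=\tau)\P_{\beta_2}(\sigma=\tau)}.
\]
Substituting \eqref{pmf} and recognizing, via \eqref{Z}, that $\sum_\tau 2^{-N} e^{\frac{1}{2}\gamma H_N(\tau)} = e^{F_N(\gamma)}$ with $\gamma = (\beta_1+\beta_2)/2$, collapses the sum to the closed form
\[
\rho_N(\beta_1,\beta_2) = \exp\biggl\{F_N\biggl(\frac{\beta_1+\beta_2}{2}\biggr) - \frac{1}{2}F_N(\beta_1) - \frac{1}{2}F_N(\beta_2)\biggr\}.
\]
Since $F_N(\beta) = \log\E_0 \exp(\beta H_N(\sigma)/2)$ is (a reparametrization of) a cumulant generating function, it is convex on $[0,\infty)$. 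Furthermore $F_N(0)=0$ and $F_N'(0) = \tfrac12 \E_0 H_N(\sigma) = 0$, because $J_N$ has zero diagonal and $\E_0 \sigma_i\sigma_j = 0$ for $i\neq j$; hence $F_N$ is nonnegative and nondecreasing on $[0,\infty)$. Consequently, $F_N((\beta_1+\beta_2)/2) \ge 0$ and $F_N(\beta_i) \le F_N(\beta_0)$ for $i=1,2$, so
\[
\rho_N(\beta_1,\beta_2) \ge e^{-F_N(\beta_0)}.
\]

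By assumption \eqref{maincondfixed}, $\limsup_N F_N(\beta_0) < \infty$, so $\rho_N(\beta_1,\beta_2) \ge c$ for some $c > 0$ and all large $N$. The elementary bound $\rho(\P,\Q)^2 \le 1 - \|\P-\Q\|_{TV}^2$ (obtained by writing $\rho = \int \sqrt{(p\wedge q)(p\vee q)}$ and applying Cauchy--Schwarz together with $\int p\wedge q = 1 - \|\P-\Q\|_{TV}$ and $\int p\vee q = 1 + \|\P-\Q\|_{TV}$) then forces $\|\P_{\beta_1}-\P_{\beta_2}\|_{TV} \le \sqrt{1-c^2} < 1$ uniformly in $N$. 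Since $\E_{\beta_2}\phi_N - \E_{\beta_1}\phi_N \le \|\P_{\beta_1}-\P_{\beta_2}\|_{TV}$ for any test $\phi_N$, it is impossible to have $\E_{\beta_1}\phi_N \to 0$ and $\E_{\beta_2}\phi_N \to 1$ simultaneously, completing the first assertion.

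I do not anticipate a genuine obstacle: once the closed form for $\rho_N$ is written down, the remainder is just convexity and monotonicity of $F_N$, both immediate from differentiation under the expectation in \eqref{Z}. The only conceptually interesting point is the observation that the Bhattacharyya affinity in this one-parameter exponential family stays inside the family, which is exactly what causes the assumption \eqref{maincondfixed} to translate into a uniform lower bound on $\rho_N$.
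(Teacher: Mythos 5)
Your proposal is correct, and it takes a genuinely different (and arguably cleaner) route than the paper. The paper bounds the Kullback--Leibler divergence
\[
D(\P_{\beta_1}\|\P_{\beta_2})=F_N(\beta_2)-F_N(\beta_1)-(\beta_2-\beta_1)F_N'(\beta_1),
\]
which is the Bregman divergence of the cumulant generating function, and then invokes a general proposition (proved via a Markov-inequality argument on the log-likelihood ratio) saying that bounded KL divergence precludes consistent testing. You instead work with the Bhattacharyya/Hellinger affinity, which in this one-parameter exponential family also has a closed form in $F_N$:
\[
\rho_N(\beta_1,\beta_2)=\exp\biggl\{F_N\biggl(\frac{\beta_1+\beta_2}{2}\biggr)-\frac{1}{2}F_N(\beta_1)-\frac{1}{2}F_N(\beta_2)\biggr\}.
\]
Your observation that $F_N$ is convex, nonnegative, and nondecreasing on $[0,\infty)$ (since $F_N(0)=0$ and $F_N'(0)=\tfrac12\E_0 H_N(\sigma)=0$) immediately yields $\rho_N\ge e^{-F_N(\beta_0)}$, and then the standard inequality $\llVert\P-\Q\rrVert_{TV}\le\sqrt{1-\rho(\P,\Q)^2}$ gives a uniform, non-asymptotic upper bound on the total variation distance, directly killing consistent testing. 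This buys you a more elementary and fully explicit argument: no auxiliary lemma about likelihood ratios, no use of $F_N'$, and a quantitative lower bound on the sum of type I and type II errors for \emph{every} $N$ (not just in the limit). The paper's KL route is essentially equivalent in strength but slightly less self-contained, since it requires proving Proposition \ref{TESTING} separately. Both approaches correctly isolate the same mechanism -- that \eqref{maincondfixed} forces the family $\{\P_\beta:\beta\in[0,\beta_0]\}$ to remain contiguous in an appropriate sense. One small stylistic note: you may wish to make explicit that the final inequality $\E_{\beta_2}\phi_N-\E_{\beta_1}\phi_N\le\llVert\P_{\beta_1}-\P_{\beta_2}\rrVert_{TV}$ holds because $\phi_N$ takes values in $[0,1]$, but this is standard and does not affect correctness.
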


%This theorem will be used later to show that for Ising models on dense
%graphs no consistent testing and estimation is possible in the high
%temperature regime (Theorem~\ref{dense}).

One of the main applications of above results is in deriving the rate
of the MPLE for Ising models on weighted graphs, that is, for matrices
$J_N$ with non-negative entries. For such matrices, condition (b) in
Theorem~\ref{GEN} can be directly verified, and we have the following
simplified corollary:

%co2.4 #&#
%
\begin{cor}\label{mplepositive}
Consider the model (\ref{pmf}) such that $J_N$ is a sequence of
matrices with non-negative entries with $\lim_{N\rightarrow\infty
}\llVert J_N\rrVert =\lambda>0$.
\begin{enumerate}[(a)]
\item[(a)] The sequence of estimators $\{\hat{\beta}_N\}_{N\geq1}$ is
$\llVert J_N\rrVert _F:=\sqrt{\sum_{i, j=1}^N J_N(i, j)^2}$
consistent at $\beta
=\beta_0$ for any $\beta_0<\frac{1}{\lambda}$, whenever condition
\textup{(a)} in Theorem~\ref{GEN} holds.

\item[(b)] If
$\limsup_{N\rightarrow\infty}\sum_{i, j=1}^N J_N(i, j)^2< \infty$,
then exists no consistent sequence of estimators for $\beta$ in the
interval $[0,\frac{1}{\lambda})$.
\end{enumerate}
\end{cor}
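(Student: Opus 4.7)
The proof is a direct application of the two main results of this section, with the log-partition-function input supplied by Lemma \ref{bound}. For part (a), I would apply Theorem \ref{GEN} with the choice $a_N = \|J_N\|_F^2 = \sum_{i,j=1}^N J_N(i,j)^2$. With this choice condition (b) of Theorem \ref{GEN} reduces to $\limsup_N a_N/a_N = 1 < \infty$ and holds trivially, while condition (a) is part of the hypothesis. So the only nontrivial step is verifying the two-sided bound \eqref{maincond}. Since $\|J_N\| \to \lambda$ and $\beta_0 < 1/\lambda$, I can fix $\delta > 0$ small enough that $\beta_0 + \delta < 1/\lambda$; the matching upper and lower bounds $c\|J_N\|_F^2 \le F_N(\beta) \le C\|J_N\|_F^2$ valid for $\beta < 1/\lambda$ and $J_N$ with non-negative entries, supplied by Lemma \ref{bound}, then yield \eqref{maincond}, after which Theorem \ref{GEN} completes the argument. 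The edge case $\|J_N\|_F = O(1)$ is vacuous for the $\|J_N\|_F$-consistency assertion and is in any event covered by part (b).

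For part (b), I would appeal to Theorem \ref{genfixed}. Fix any $\beta_0 \in (0, 1/\lambda)$; Lemma \ref{bound} gives $F_N(\beta_0) \le C \|J_N\|_F^2$, and by the standing hypothesis $\limsup_N \|J_N\|_F^2 < \infty$, so $\limsup_N F_N(\beta_0) < \infty$. Theorem \ref{genfixed} then rules out consistent tests between any two points in $[0, \beta_0]$. Since $\beta_0$ can be chosen arbitrarily close to $1/\lambda$, and since any consistent estimator on $[0, 1/\lambda)$ would induce (via thresholding) a consistent test between any two fixed points of that interval, no such estimator can exist.

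The main obstacle in this program is concentrated in Lemma \ref{bound} rather than in the corollary itself: obtaining a genuine lower bound $F_N(\beta) \ge c \|J_N\|_F^2$ throughout the high-temperature regime $\beta < 1/\lambda$ is the delicate analytic step, typically resting on a Gaussian-type (Hubbard--Stratonovich) representation or a careful Hanson--Wright-style estimate that exploits the operator-norm cutoff $\beta\|J_N\| < 1$; the naive Jensen lower bound is too weak once $\|J_N\|_F \to \infty$. Once that lemma is in hand, the present corollary is a clean bookkeeping exercise: matching $a_N$ to $\|J_N\|_F^2$ renders the Frobenius-type side condition (b) of Theorem \ref{GEN} automatic, and the operator-norm cutoff $\beta_0 < 1/\lambda$ identifies the safe regime in which both theorems apply.
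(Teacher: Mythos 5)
Your proof matches the paper's exactly: combine Lemma~\ref{bound} with the elementary estimates $\log\cosh x\ge C_1 x^2$ for $|x|\le 1$ and $-\log(1-x)-x\le C_2 x^2$ for $|x|<1$ to get $C_1\beta^2\llVert J_N\rrVert_F^2 \le F_N(\beta) \le C_2\beta^2\llVert J_N\rrVert_F^2$ throughout $\beta<1/\lambda$, and then apply Theorem~\ref{GEN} with $a_N=\llVert J_N\rrVert_F^2$ for part (a) and Theorem~\ref{genfixed} for part (b). One small correction to your closing aside: the lower bound in Lemma~\ref{bound} is not a Hubbard--Stratonovich or Hanson--Wright argument but a one-line Rademacher moment comparison ($\E\prod_{i<j} Y_{ij}^{b_{ij}} \le \E_0\prod_{i<j}(\sigma_i\sigma_j)^{b_{ij}}$ for i.i.d. signs $Y_{ij}$, hence $e^{F_N(\beta)}\ge\prod_{i<j}\cosh(\beta J_N(i,j))$); the Gaussian comparison in the lemma supplies the \emph{upper} bound, not the lower.
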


%s3 #&#
\section{Applications}
\label{examples}

The $\sqrt N$-consistency of the MPLE in the Sherrington--Kirkpatrick
(SK) model and the Hopfield model, for all values of $\beta> 0$,
follows from results of Chatterjee \cite{Chatterjee}. Our results give
the rate of consistency of the MPLE in the regime where it is not
$\sqrt N$-consistent.

We begin with a simple example where the rate of the MPLE undergoes
multiple phase transitions.

%ex1 #&#
%
\begin{example}\label{ptblock} Consider the model \eqref{pmf} with
\begin{align*}
J_N(i,j)=
\begin{cases} \frac{1}{N},
& \mbox{if } 1\le i \ne j\le\frac{N}{2},
\\
\frac{1}{\sqrt{N}}, & \text{if }  \frac{N}{2}<i\ne j\le
\frac
{N}{2}+\sqrt{N},
\\
0, & \text{otherwise.}
\end{cases}
\end{align*}
Then the sequence of estimators $\{\hat{\beta}_N\}_{N\geq1}$ is
inconsistent for $\beta\in(0,1)$, $N^{1/4}$-consistent for $\beta\in
(1,2)$, and $\sqrt{N}$-consistent if $\beta>2$.
\end{example}

The proof of the above example is given in Section~\ref{ptblockpf}. In
fact, this example can be easily generalized to construct a $K$-block
matrix $J_N$ such that the consistency rate of MPLE undergoes $K$ phase
transitions. However, for most popular choices of $J_N$ the rate of the
MPLE undergoes at most one phase transition from $\llVert J_N\rrVert
_F$-consistent
to $\sqrt N$-consistent.

%s3.1 #&#
\subsection{Ising model on regular graphs}
Let $G_N$ be a sequence of $d_N$ regular graphs. Consider the family of
probability distributions (\ref{pmf}) with the sufficient statistic
%
%e3.1 #&#
%
\begin{equation}
H_N(\tau)=\frac{1}{d_N}\tau'A(G_N)
\tau, \label{adjm}
\end{equation}
where $A(G_N)=((a_N(i, j)))$ is the adjacency matrix of the graph
$G_N$. This includes Ising models on lattices, complete graph,
hypercube, and random regular graphs, among others, and have been
extensively studied in probability and statistical physics. Dembo et
al. \cite{dembom_sun,dembom} derived the limit of the log-partition
function for random regular (and other locally-tree like) graphs.
%The Glauber dynamics is one of the most practiced methods to sample
%the Gibbs distribution, and has been extensively studied for regular
%graphs.
Levin \textit{et al.} \cite{peres_complete_graph} showed that the
mixing time
of the Glauber dynamics on the complete graph exhibits the cutoff
phenomenon \cite{diaconis_cutoff} in the high temperature regime. The
cutoff phenomenon for lattices was established by Lubetzky and Sly in a
series of breakthrough papers (refer to \cite
{lubetzky_sly_ip,lubetzky_sly_lattice} and the references therein).

The next result gives the rate of consistency of the MPLE for general
regular graphs. The proofs are deferred to Section~\ref{pfapplication}.

%co3.1 #&#
%
\begin{cor}\label{dregular}
Fix $\beta_0>0$ and let $G_N$ be a sequence of $d_N$ regular graphs.
Suppose $\{\hat{\beta}_N\}_{N\geq1}$ is the MPLE for the model (\ref
{pmf}) with sufficient statistic (\ref{adjm}).
\begin{enumerate}[(a)]

\item[(a)] If $0<\beta_0<1$, $\{\hat{\beta}_N\}_{N\geq1}$ is a $\sqrt
{N/d_N}$-consistent sequence of estimators for $\beta_0$.

\item[(b)] If $\beta_0>1$, $\{\hat{\beta}_N\}_{N\geq1}$ is a $\sqrt
{N}$-consistent sequence of estimators for $\beta_0$.
\end{enumerate}
\end{cor}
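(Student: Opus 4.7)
The plan is to recognize that for $d_N$-regular graphs the matrix $J_N := A(G_N)/d_N$ is entrywise non-negative, has operator norm $\llVert J_N\rrVert = 1$ (the all-ones vector is a Perron eigenvector of $A(G_N)$ with eigenvalue $d_N$), and Frobenius norm
\[
\llVert J_N\rrVert_F^2 \;=\; \frac{1}{d_N^2}\sum_{i,j} A(G_N)(i,j)^2 \;=\; \frac{1}{d_N^2}\cdot N d_N \;=\; \frac{N}{d_N}.
\]
Moreover, the conditional means $m_i(\sigma) = d_N^{-1}\sum_{j\sim i}\sigma_j$ are uniformly bounded by $1$ in absolute value, so condition~(a) of Theorem~\ref{GEN} is trivially satisfied: the indicator $\pmb1\{|m_i(\sigma)|>K\}$ vanishes identically for every $K\ge 1$.

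For part~(a) with $\beta_0\in(0,1)$, I would invoke Corollary~\ref{mplepositive}(a) directly with $\lambda = 1$ and $a_N = \llVert J_N\rrVert_F^2 = N/d_N$. The requisite order of the log-partition function ($F_N(\beta)\asymp N/d_N$ for $\beta$ in a small neighborhood of $\beta_0$ with $\beta_0+\delta<1$) is supplied by Lemma~\ref{bound} for matrices with non-negative entries. This immediately yields $\sqrt{N/d_N}$-consistency.

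For part~(b) with $\beta_0>1$, the target rate matches Chatterjee's original result (Corollary~\ref{sc}), so the only nontrivial task is to verify the hypothesis \eqref{ptN}, namely $\liminf_N F_N(\beta_0)/N>0$. I would establish this via a mean-field variational lower bound that is uniform over all $d_N$-regular graphs. Let $\nu_q$ denote the product measure on $S_N$ with marginals $\nu_q(\sigma_i=+1)=q$. Because $\sum_{i\neq j} A(G_N)(i,j) = Nd_N$ holds for \emph{every} $d_N$-regular graph, one has $\E_{\nu_q} H_N(\sigma) = (2q-1)^2 N$, while $D(\nu_q\Vert \P_0) = N(\log 2 - h(q))$ with $h$ the binary entropy. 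Jensen's inequality applied to the change of measure then gives
\[
\frac{F_N(\beta_0)}{N} \;\ge\; \frac{\beta_0}{2}(2q-1)^2 + h(q) - \log 2 \qquad\text{for every } q\in(0,1),
\]
and optimizing over $m:=2q-1$ produces the Curie--Weiss fixed-point equation $m = \tanh(\beta_0 m)$, which admits a strictly positive root $m^*$ precisely when $\beta_0>1$. Evaluating at $q=(1+m^*)/2$ makes the right-hand side a positive constant independent of $N$ and of the graph sequence, verifying \eqref{ptN}.

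The main obstacle is the lower bound in part~(b). The naive one-configuration bound obtained by restricting the sum in~\eqref{Z} to $\sigma\equiv +1$ (for which $H_N = N$) only gives $F_N(\beta_0)\ge N(\beta_0/2 - \log 2)$, which is meaningful only for $\beta_0 > 2\log 2 \approx 1.386$; it is the entropic term $h(q)$ coming from averaging against the product measure $\nu_q$ that closes the gap all the way down to the mean-field threshold $\beta_0 = 1$. All remaining ingredients (boundedness of the $m_i$, computation of $\llVert J_N\rrVert_F$, and the automatic verification of condition~(b) of Theorem~\ref{GEN} thanks to non-negativity) are immediate from the regular-graph structure.
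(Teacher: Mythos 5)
Your argument is correct and follows essentially the same route as the paper. Part~(a) is verified identically via Corollary~\ref{mplepositive}(a), using $\llVert J_N\rrVert =1$, $\llVert J_N\rrVert _F^2=N/d_N$, and $|m_i(\sigma)|\le 1$. For part~(b), your Jensen-against-a-product-measure bound is exactly the mean-field lower bound the paper obtains via Lemma~\ref{meanfield}: in the notation $m=2q-1$, your right-hand side is precisely $g(m):=\frac{\beta_0 m^2}{2}-I(m)$ with $I(m)=\log 2 - h((1+m)/2)$, so the two approaches coincide. The only step that needs tightening is the last one: noting that $m=\tanh(\beta_0 m)$ has a nonzero root $m^*$ when $\beta_0>1$ and then asserting $g(m^*)>0$ leaves a gap, since $m=0$ is also a critical point with $g(0)=0$ and you have not yet argued that $m^*$ is a maximizer rather than another stationary point of the same value. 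The paper's cleaner observation is that $g(0)=0$, $g'(0)=0$, and $g''(0)=\beta_0-1>0$, so $g$ is strictly positive in a punctured neighborhood of $0$ and hence $\sup_{m\in[-1,1]} g(m)>0$, which is all that (\ref{znbeta0lb}) requires.
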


The above theorem shows that the rate of the MPLE undergoes a phase
transition at $\beta=1$ for general regular graphs. In particular if
$d_N=d=O(1)$ remains bounded, then the above theorem shows that the
MPLE is $\sqrt N$ for all non-negative $\beta\ne1$. However, in this
case, it is easy to argue that $\liminf_{N\rightarrow\infty}\frac
{1}{N}F_N(\beta)>0$, for all $\beta> 0$ (see proof of lower bound in
Corollary~\ref{dregular}). Theorem~\ref{GEN} then concludes that
$\hat\beta_N$ is $\sqrt{N}$-consistent for all values of $\beta>
0$. In fact, using similar arguments as in the proof of Corollary~\ref
{dregular}, it follows that the MPLE $\{\hat{\beta}_N\}_{N\geq1}$ is
$\sqrt N$-consistent for all $\beta>0$ in all bounded degree graphs
with at least $O(N)$ edges. This shows that MPLE is $\sqrt
N$-consistent for lattice graphs re-deriving classical results (see
\cite{guyon} and the references therein).

For $d_N\rightarrow\infty$, the behavior of the MPLE at $\beta=1$
remains unclear. It is believed that the MPLE might have a non-Gaussian
limiting distribution at the critical point $\beta=1$ \cite{Chatterjee}.

%re3.1 #&#
%
\begin{remark}
If $d_N=\Theta(N)$,\footnote{Given non-negative sequences $\{a_N\}
_{N\geq1}$ and $\{b_N\}_{N\geq1}$, the notation $a_N=\Theta(b_N)$
means that there exist constants $k_1, k_2>0$, such that $k_1b_N\leq
a_N \leq k_2 b_N$, for all $N$ large enough.} then Theorem~\ref
{dregular} shows that the MPLE is $O(1)$ consistent for $0<\beta_0<1$,
suggesting that the MPLE might be inconsistent in this regime.
Chatterjee \cite{Chatterjee} showed that this is indeed the case for
the Curie--Weiss model (where $J_N(i, j)=1/N$ for all $i\ne j$) for
$0\leq\beta<1$. Comets and Gidas \cite{comets_mle} showed that even
the MLE of $\beta$ in the Curie--Weiss model is inconsistent for
$0\leq
\beta<1$. Later, in Theorem~\ref{dense} we strengthen this result by
showing that for Ising models on arbitrary dense graphs, there exists
no sequence of consistent estimators before the phase transition point.
This extends the results in \cite{Chatterjee,comets_mle} and justifies
the $O(1)$-rate of the MPLE in the dense case.
\end{remark}

%%%%%%%%%%%%%%%%%%%%%%%%%%%%%%%%%%%%%%%%%%%%%%%%%%%%%%%%%%%%%%%%%%%%%%%%%%%%%%%%%%
%\begin{figure*}[h]
%\centering
%\begin{minipage}[c]{1.0\textwidth}
%\centering
%\includegraphics[width=3.95in, height=3.35in]
% {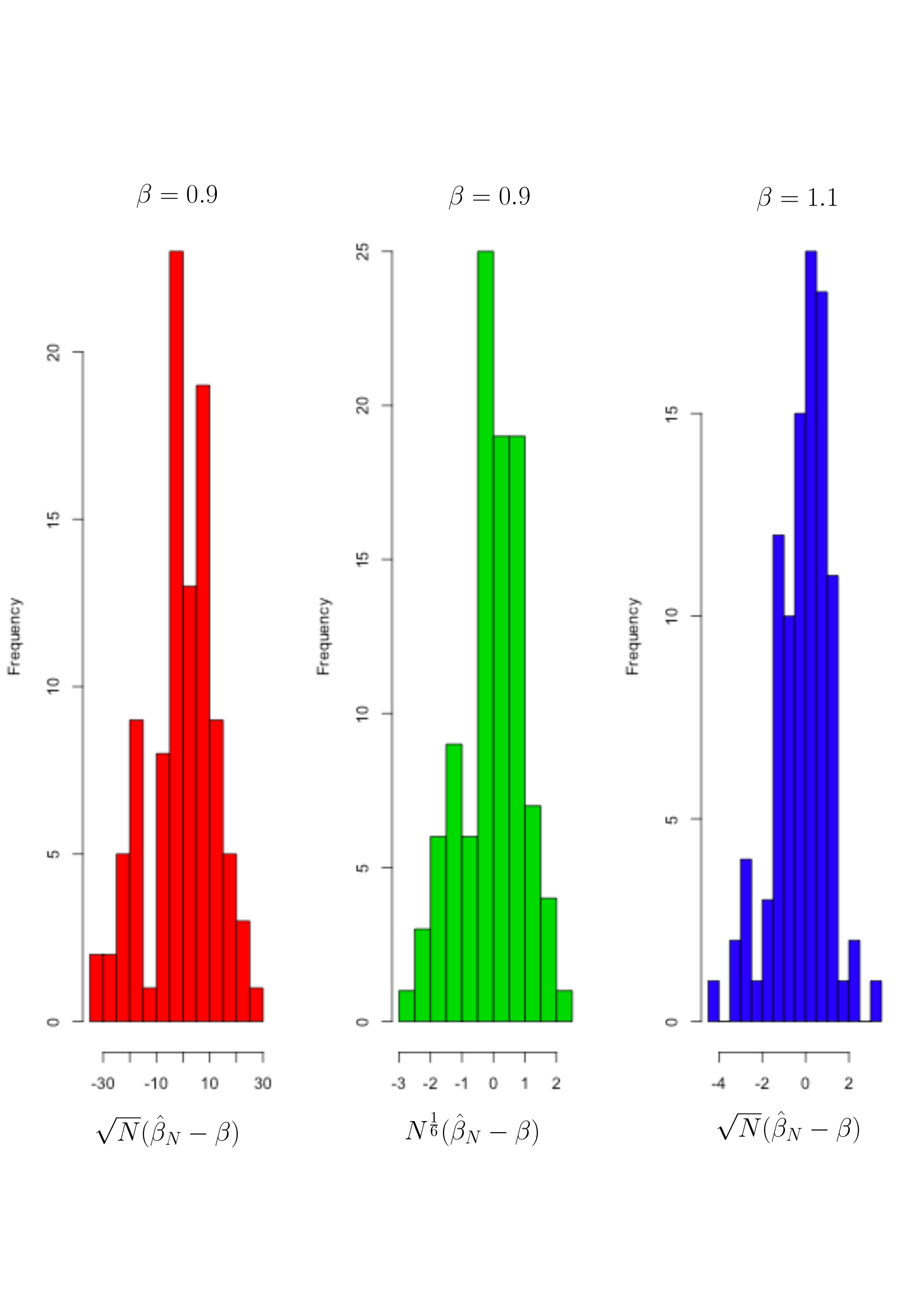}
%\end{minipage}
%%\begin{minipage}[c]{0.49\textwidth}
%%\centering
%%\includegraphics[width=3.0in]
%% {KNPowerL.pdf}
%%\end{minipage}
%\caption{\footnotesize{Histograms of the (scaled) MPLE in an Ising
%model on $G_N\sim\cG(N, p(N))$ with $N=2000$ and $p(N)=N^{-
%\frac{1}{3}}$ over 100 repetitions for $\beta=0.9<1$ and $
%\beta=1.1>1$. Note that at $\beta=0.9$, the MPLE scaled by $\sqrt N$
%(red histogram) has much larger variance compared to the correct
%scaling of $N^{\frac{1}{6}}$ (green histogram) as predicted by
%Corollary~\ref{erdosrenyirandom}. For $\beta=1.1$ the MPLE scales like
%$\sqrt N$ (blue histogram).}}
%\label{mplehist}
%\end{figure*}
%%%%%%%%%%%%%%%%%%%%%%%%%%%%%%%%%%%%%%%%%%%%%%%%%%%%%%%%%%%%%%%%%%%%%%%%%%%%%%%%%%%%%%%%%%%%%%%%%%%%%%%%%%%%
%

%s3.2 #&#
\subsection{Ising model on Erd\H{o}s--R\'enyi graphs}
Let $G_N\sim\cG(N, p(N))$ be a sequence of Erd\H os--R\'enyi graphs.
Consider the family of probability distributions (\ref{pmf}) with the
sufficient statistic
%
%e3.2 #&#
%
\begin{equation}
H_N(\tau)=\frac{1}{N p(N)}\tau'A(G_N)
\tau, \label{er}
\end{equation}
where $A(G_N)=((a_N(i, j)))$ is the adjacency matrix of the graph $G_N$.

%co3.2 #&#
%
\begin{cor}\label{erdosrenyirandom}
Fix $\beta_0>0$ and consider a sequence $G_N\sim\cG(N, p(N))$ of
Erd\H os--R\'enyi graphs, with $\frac{\log N}{N}\ll p(N) \leq1$. Let
$\{\hat{\beta}_N\}_{N\geq1}$ be the MPLE for the model (\ref{pmf})
with sufficient statistic~(\ref{er}).
%\footnote{For $p(N)=O(\frac{\log N}{N})$, the spectral norm of $
%\frac{1}{Np(N)}A(G_N)$ blows up \cite{ks}. Moreover, for $p(N)\leq
%\frac{(1-\varepsilon)\log N}{N}$ an Erd\H os-R\'enyi random graph has
%isolated vertices with high probability.}
%
\begin{enumerate}[(a)]

\item[(a)] If $0<\beta_0<1$, $\{\hat{\beta}_N\}_{N\geq1}$ is a $\sqrt
{1/p(N)}$-consistent sequence of estimators for $\beta_0$.

\item[(b)] If $\beta_0>1$, $\{\hat{\beta}_N\}_{N\geq1}$ is a $\sqrt
{N}$-consistent sequence of estimators for $\beta_0$.
\end{enumerate}
\end{cor}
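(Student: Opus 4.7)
The plan is to apply Corollary \ref{mplepositive}(a) for part (a) and Theorem \ref{GEN} with $a_N=N$ for part (b), after verifying their deterministic hypotheses on an event $\mathcal{E}_N$ over the random graph with $\P(\mathcal{E}_N)\to 1$; conditional consistency on $\mathcal{E}_N$ then yields the claimed $O_P$ statements unconditionally. All the random-graph inputs for $J_N=\frac{1}{Np(N)}A(G_N)$ are standard in the regime $p(N)\gg \log N/N$: F\"uredi--Koml\'os concentration gives $\|A(G_N)\|=(1+o(1))Np(N)$, so $\|J_N\|\to 1$; binomial concentration on $|E(G_N)|$ yields
\begin{align*}
\|J_N\|_F^2=\frac{2|E(G_N)|}{N^2p(N)^2}=\frac{1+o(1)}{p(N)}, \qquad \mathbf{1}'J_N\mathbf{1}=(1+o(1))N;
\end{align*}
and a Chernoff/union-bound estimate gives $\max_i\deg_i(G_N)=(1+o(1))Np(N)$. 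This last bound forces $|m_i(\sigma)|\le \sum_j J_N(i,j)\le 2$ uniformly in $i$ and $\sigma$, making condition (a) of Theorem \ref{GEN} automatic since $\mathbf{1}\{|m_i(\sigma)|>K\}\equiv 0$ for $K>2$.

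For part (a), Corollary \ref{mplepositive}(a) then applies directly with $\lambda=1$: at any $\beta_0<1$ the MPLE is $\|J_N\|_F$-consistent, and $\|J_N\|_F=(1+o(1))\sqrt{1/p(N)}$ on $\mathcal{E}_N$ yields the stated $\sqrt{1/p(N)}$-consistency.

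For part (b), I would invoke Theorem \ref{GEN} with $a_N=N$. Condition (b) is immediate from $N^{-1}\|J_N\|_F^2=(1+o(1))/(Np(N))\to 0$, and condition (a) was handled above. The upper bound in (\ref{maincond}) follows from $F_N(\beta)\le \tfrac{1}{2}\beta\max_\tau H_N(\tau)\le \tfrac{1}{2}\beta\|J_N\|\cdot N=O(N)$. The lower bound is the nontrivial step: using the product measure $Q_m$ on $S_N$ with $Q_m(\sigma_i=+1)=(1+m)/2$ independently, the Gibbs variational inequality gives
\begin{align*}
F_N(\beta)\ge \tfrac{1}{2}\beta\,\E_{Q_m}H_N(\sigma)-\mathrm{KL}(Q_m\|\P_0)=\tfrac{1}{2}\beta m^2\,\mathbf{1}'J_N\mathbf{1}-N\,I(m),
\end{align*}
where $I(m)=\tfrac{1+m}{2}\log(1+m)+\tfrac{1-m}{2}\log(1-m)$. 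Since $\mathbf{1}'J_N\mathbf{1}/N\to 1$ on $\mathcal{E}_N$, dividing by $N$ yields $\liminf F_N(\beta)/N\ge \phi_\beta(m):=\tfrac{1}{2}\beta m^2-I(m)$ for each $m\in(-1,1)$. Because $\phi_\beta''(0)=\beta-1>0$ whenever $\beta>1$, $\sup_m \phi_\beta(m)>0$, and choosing $\delta>0$ so that $\beta_0-\delta>1$ produces the required positive lower bound.

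The main obstacle is the lower bound on $F_N(\beta_0-\delta)$ for $\beta_0>1$, which is exactly the content of the ferromagnetic phase transition at $\beta=1$; by contrast, the spectral and degree concentration for $\cG(N,p)$, the matching upper bound on $F_N$, and the uniform boundedness of the local fields $m_i(\sigma)$ are all routine given the assumption $p(N)\gg\log N/N$.
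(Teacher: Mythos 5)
Your proposal is correct and follows essentially the same route as the paper: for part (a) you verify the hypotheses of Corollary \ref{mplepositive} using Krivelevich--Sudakov for $\|J_N\|\to 1$, concentration of $|E(G_N)|$ for $\|J_N\|_F^2\sim 1/p(N)$, and maximum-degree concentration for the uniform bound on $m_i(\sigma)$ that makes condition (a) trivial; for part (b) you use the mean-field variational lower bound with a constant magnetization vector and the second-derivative argument $\phi_{\beta}''(0)=\beta-1>0$, which is precisely the paper's Lemma \ref{meanfield} computation as invoked in the proofs of Corollaries \ref{dregular} and \ref{erdosrenyirandom}. The only cosmetic difference is that the paper routes part (b) through Corollary \ref{sc} (verifying condition \eqref{ptN} and citing $\sqrt N$-consistency) rather than re-checking the hypotheses of Theorem \ref{GEN} at $a_N=N$, but Corollary \ref{sc} is itself a specialization of Theorem \ref{GEN}, so the two are the same argument.
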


As in the regular case, the rate of the MPLE undergoes a phase
transition at $\beta=1$ for Erd\H os--R\'enyi graphs. Figure~\ref
{mpleerrorrate} shows the error bars for the MPLE for the Ising model
on $G_N\sim\cG(N, p(N))$, with $N=2000$ and $p(N)=N^{-\frac{1}{3}}$,
for a sequence of values of $\beta\in[0, 2]$.

%
%f1 #&#
%

%%%%%%%%%%%%%%%%%%%%%%%%%%%%%%%%%%%%%%%%%%%%%%%%%%%%%%%%%%%%%%%%%%%%%%%%%%%%%%%
\begin{figure*}[h]
\centering
\begin{minipage}[c]{1.0\textwidth}
\centering
\includegraphics[width=2.8in,height=2.8in]
    {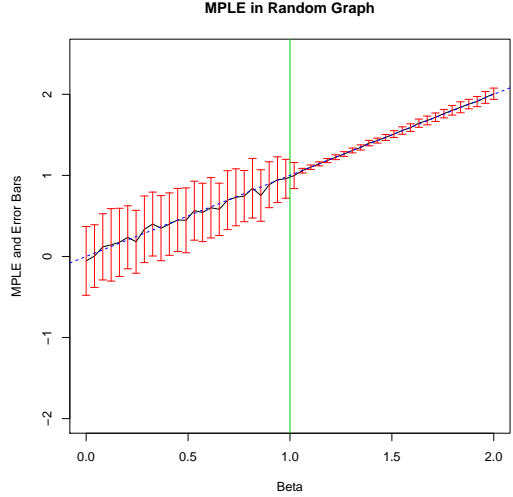}\\
\end{minipage}
%\begin{minipage}[c]{0.49\textwidth}
%\centering
%\includegraphics[width=3.0in]
%    {KNPowerL.pdf}\\
%\end{minipage}
\caption{\footnotesize{The MPLE and the 1-standard deviation error bar in an Ising model on $G_N\sim \cG(N, p(N))$ with $N=2000$ and $p(N)=N^{-\frac{1}{3}}$ averaged over 100 repetitions for a sequence of values of $\beta \in [0, 2]$. Lengths of the error bars undergo a phase transition at $\beta=1$, as predicted by Corollary~\ref{erdosrenyirandom} which shows that for $0\leq \beta<1$ the MPLE is $N^{\frac{1}{6}}$ consistent, and for $\beta>1$, the MPLE is $\sqrt N$-consistent.}}
\label{mpleerrorrate}\vspace{-0.2in}
\end{figure*}
%%%%%%%%%%%%%%%%%%%%%%%%%%%%%%%%%%%%%%%%%%%%%%%%%%%%%%%%%%%%%%%%%%%%%%%%%%%%%%%%%%%%%%%%%%%%%%%%%%%%%%%%%

%s3.3 #&#
\subsection{Ising model on dense graphs}

Recall that the MPLE is inconsistent in the Curie--Weiss model in the
high temperature regime, $0 \leq\beta< 1$ \cite{Chatterjee}. In this
section, using the emerging theory of graph limits and Theorem~\ref
{genfixed} above, we strengthen this result to show that consistent
testing is impossible in the entire high temperature regime in Ising
models on a converging sequence of dense graphs. We also
calculate the distribution of the most powerful test and the asymptotic
power in this regime.

%s3.3.1 #&#
\subsubsection{Graph limit theory} Let $G_N$ be a simple graph with vertices $V(G_N)=\{1, 2, \ldots, N\}$ and adjacency matrix
$A(G_N)$. %which is %=((w(a, b)))_{1\le a,b\leq N}$
%a symmetric matrix of
%edge-weights in $[0, 1]$ with zeros on the %diagonal, that is,
%
%e3.3 #&#
%
%\begin{equation}
%w(a, a)=0,\qquad w(a, b)\in[0, 1],\qquad w(a, b)=w(b, a)\qquad \text{for }1\leq a<
%b\leq N.
%\label{edgeweights}
%\end{equation}
%
%Conversely, any symmetric $N\times N$ matrix with entries in $[0, 1]$
%and zeros on the diagonal is the edge weight matrix of some weighted
%graph. A graph is said to be \textit{simple} if $w(a, a)=0$ and $w(a,
%b)=w(b,a)\in\{0, 1\}$, for $1\leq a<b\leq N$. In this case, $E(G_N)=\{
%w(a, b): w(a, b)=1, 1\leq a<b\leq N\}$ denotes the set of edges in the
%graph $G_N$.
Lov\'asz and co-authors \cite{graph_limits_I,graph_limits_II}
developed a limit theory of graphs, which connects various
topics such as graph homomorphisms, Szemer\'edi regularity lemma, and
extremal graph theory. In the following, we summarize the basic results for converging sequence of graphs  (cf. Lov\'asz \cite{lovasz_book} for
a detailed exposition).
To this end, note that any graph $G_N$ %, with edge-weights as in (\ref{edgeweights}),
can be represented as a function $W_{G_N}: [0,
1]^2\rightarrow[0, 1]$ in a natural way: Define\vadjust{\goodbreak} $W_{G_N}(x, y)
:=1$ if and only if $(\ceil{nx},\ceil{ny})$ is an edge in $G_N$, %w(\ceil{nx}, \ceil{ny})$.
that is, partition $[0, 1]^2$ into $N^2$
squares of side length $1/N$, and define $W_{G_N}(x, y)=1$, when $(x, y)$ is in the
$(a, b)$th square and $(a,b)$ is an edge in $G_N$. Let $\sW$ be the space of all measurable functions
from $[0, 1]^2$ into $[0, 1]$ that satisfy $W(x, y) = W(y,x)$ for all
$x, y\in[0, 1]$. For every $W\in\sW$ and any fixed simple graph
$H=(V(H), E(H))$ define the \textit{homomorphism density}
\[
t(H,W) =
\int_{[0,1]^{|V(H)|}}\prod_{(i,j)\in E(H)}W(x_i,x_j)
\,\mathrm{d} x_1\,\mathrm{d} x_2\cdots\,\mathrm{d}
x_{|V(H)|}.
\]
A sequence of simple graphs $\{G_N\}_{N\geq1}$ is said to converge
to $W\in\sW$ if for every finite simple graph $H$,
%
%e3.4 #&#
%
\begin{equation}
\lim_{N\rightarrow\infty}t(H, G_N) = t(H, W). \label{eq:graph_limit}
\end{equation}
The limit objects, that is, the elements of $\sW$, are called {\it
graph limits} or \textit{graphons}. Conversely, every such function arises
as the limit of an appropriate graph sequence.

It turns out that the above notion of convergence can be suitably
metrized using the so-called \textit{cut-metric} (cf. \cite
{lovasz_book}, Chapter~8, for details).
%$$d_\square(f,g):=\sup_{S,T\subset[0,1]}\left|\int_{S\times
%T}[f(x,y)-g(x,y)]\mathrm d x\mathrm d y\right|,$$ for $f, g\in\sW$.
%Define an equivalence relation on $\sW$ as follows: $f\sim g$ whenever
%$f(x, y)=g_\sigma(x, y):=g(\sigma x, \sigma y)$, for some measure
%preserving bijection $\sigma:[0,1]\mapsto[0,1] $. Denote by $
%\widetilde g$ the closure of the orbit $g_\sigma$ in $(\sW, d_
%\square)$. The quotient space is denoted by $\widetilde{\sW}$ and
%associated with the following natural metric: $$\delta_\square(
%\widetilde{f},\widetilde{g}):=\inf_{\sigma}d_\square(f,g_\sigma)=\inf_{
%\sigma}d_\square(f_\sigma,g)=\inf_{\sigma_1,\sigma_2}d(f_{\sigma_1},g_{
%\sigma_2}).$$ Then the space $(\widetilde{\sW},\delta_\square)$ is
%compact \cite{graph_limits_I}, and the metric $\delta_\square$ is
%commonly referred to as the \textit{cut-metric}. The main result in
%graph
%limit theory is that a sequence of weighted graphs $\{G_N\}_{N\geq1}$
%converges to a limit $W\in\sW$ in the sense defined in (
%\ref{eq:graph_limit}) if and only if $\delta_\square(\widetilde
%W_{G_N}, \widetilde W)\rightarrow0$
%\cite[Theorem~3.8]{graph_limits_I}. More generally, a sequence $\{
%\widetilde W_N\}_{n
%\geq1}$ converges to $\widetilde W\in\widetilde\sW$ if and only if $
%\lim_{N\rightarrow\infty}\delta_\square(\widetilde W_N, \widetilde
%W)=0$.
Moreover, every function $W\in\sW$ defines an operator $T_W: L_2[0,
1]\rightarrow L_2[0, 1]$:
%
%e3.5 #&#
%
\begin{eqnarray}
(T_Wf) (x)=
\int_0^1W(x, y)f(y)\,\mathrm{d} y. \label{eq:T}
\end{eqnarray}
$T_W$ is a Hilbert--Schmidt operator with operator norm denoted by
$\llVert W\rrVert $, which is compact and has a discrete spectrum,
that is, a
countable multiset of non-zero real eigenvalues $\{\lambda_i(W)\}_{i
\in\N}$. In particular, every non-zero eigenvalue has finite
multiplicity and
%
%e3.6 #&#
%
\begin{eqnarray}
\sum_{i=1}^\infty\lambda_i^2(W)=
\int_{[0, 1]^2}W(x, y)^2\,\mathrm{d} x\,\mathrm{d} y:=\llVert
W\rrVert_2^2. \label{lambda2}
\end{eqnarray}

%s3.3.2 #&#
\subsubsection{Consistency and asymptotic power} Recall that for a
graph $G_N$, $A(G_N)$ is the adjacency matrix  of $G_N$. Now, using graph limit theory we show the following result:

\begin{thm}
\label{dense_testing}
Let $\{G_N\}_{N\geq1}$ be a sequence of simple graphs which converges in cut-metric to $W \in\sW$ such that
$\int_{[0, 1]^2}W(x, y)\,\mathrm{d} x\,\mathrm{d} y>0$. Consider the testing
problem (\ref{testingbeta12}) given a single realization $\sigma\in
S_N$ from (\ref{pmf}) with sufficient statistic $H_N(\tau)=\frac
{1}{N}\tau' A(G_N)\tau$.
\begin{enumerate}[(a)]
\item[(a)] If $0\leq\beta_1<\beta_2<\frac{1}{\llVert W\rrVert }$, then
there does
not exist a sequence of consistent tests for (\ref{testingbeta12}).

\item[(b)] If $\beta_0> \frac{1}{\llVert W\rrVert }$, then the MPLE $\{
\hat\beta_N\}
_{N\geq1}$ is a sequence of $\sqrt{N}$-consistent estimators for
$\beta=\beta_0$.
\end{enumerate}
\end{thm}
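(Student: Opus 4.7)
The proof reduces to the two earlier theorems. Part (a) requires $\limsup_N F_N(\beta_2)<\infty$, whence Theorem \ref{genfixed} rules out consistent tests. Part (b) requires the hypotheses of Theorem \ref{GEN} with $a_N=N$, whose key ingredient is $\liminf_N F_N(\beta_0)/N>0$. Throughout I use two consequences of cut-metric convergence $G_N\to W$: $\mu_1(A(G_N))/N\to\|W\|$ and $\|A(G_N)\|_F^2/N^2\to\|W\|_2^2<\infty$.

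For (a), fix $\beta_2<1/\|W\|$ and pick $p>1$ with $p\beta_2\|W\|<1$. Decompose the graphon into its rank-$k$ spectral truncation plus residual, $W=W_k+R_k$ with $\|R_k\|_2\to 0$ as $k\to\infty$, and lift this to $A(G_N)/N=P_{N,k}+E_{N,k}$. H\"older's inequality with conjugates $(p,q)$ gives
\[
\E_0\exp\bigl(\tfrac{\beta_2}{2}H_N\bigr)\le\bigl(\E_0\exp(\tfrac{p\beta_2}{2}\sigma'P_{N,k}\sigma)\bigr)^{1/p}\bigl(\E_0\exp(\tfrac{q\beta_2}{2}\sigma'E_{N,k}\sigma)\bigr)^{1/q}.
\]
The first factor, an MGF of the rank-$k$ quadratic form $\sum_{i\le k}(\mu_i/N)(v_i'\sigma)^2$, converges via the Lindeberg CLT $(v_1'\sigma,\dots,v_k'\sigma)\Rightarrow N(0,I_k)$ (using orthonormality and delocalization $\sqrt{N}\|v_i\|_\infty\le C$) to the finite value $\prod_{i\le k}(1-p\beta_2\lambda_i(W))^{-1/2}$. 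The second is bounded by Hanson--Wright: $\|E_{N,k}\|_{\mathrm{op}}\to|\lambda_{k+1}(W)|\to 0$ and $\|E_{N,k}\|_F^2\to\|R_k\|_2^2\to 0$, so the MGF tends to $1$ as $k\to\infty$. Letting $N\to\infty$ then $k\to\infty$ yields $\limsup_N F_N(\beta_2)\le -\tfrac{1}{2p}\sum_i\log(1-p\beta_2\lambda_i(W))<\infty$.

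For (b), the upper side of the two-sided hypothesis of Theorem \ref{GEN} is immediate: $H_N(\sigma)\le \mu_1$, so $F_N(\beta_0+\delta)/N\le (\beta_0+\delta)\mu_1/(2N)\to (\beta_0+\delta)\|W\|/2<\infty$. For the lower side I apply the Gibbs variational inequality $F_N(\beta_0)\ge \tfrac{\beta_0}{2}\E_Q[H_N]-\mathrm{KL}(Q\Vert\P_0)$ for any product measure $Q$ on $S_N$. Take $Q$ to be product Bernoulli with $\E_Q\sigma_i=c\sqrt{N}\,v_1(i)$, where $v_1$ is the unit top eigenvector of $A(G_N)$ and $c>0$ is small; the delocalization $\sqrt{N}\max_i|v_1(i)|\le C$ needed to keep $|\E_Q\sigma_i|<1$ follows from $L^\infty$-boundedness of the limiting top eigenfunction $\psi_1$ (since $T_W:L^2\to L^\infty$ for bounded $W$). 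Direct computation gives $\E_Q[H_N]=c^2\mu_1$ and, via the Taylor expansion $I(t)=t^2/2+O(t^4)$ of the binary entropy, $\mathrm{KL}(Q\Vert\P_0)=c^2 N/2+O(c^4 N)$. Hence
\[
\tfrac{1}{N}F_N(\beta_0)\ge \tfrac{c^2}{2}\bigl(\beta_0\mu_1/N-1\bigr)-O(c^4),
\]
whose $\liminf$ is $\tfrac{c^2}{2}(\beta_0\|W\|-1)-O(c^4)>0$ for $c$ small, since $\beta_0\|W\|>1$. The remaining conditions (a)-(b) of Theorem \ref{GEN} are routine: $|m_i(\sigma)|\le 1$ since each row of $J_N=A(G_N)/N$ sums to at most $1$ for a simple graph, and $\sum_{i,j}J_N(i,j)^2=\|A(G_N)\|_F^2/N^2\to\|W\|_2^2<\infty$.

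The principal obstacle is the uniform exponential moment bound in part (a). A naive Hubbard--Stratonovich/Gaussian-comparison upper bound would involve $\mathrm{tr}(A(G_N)_+)/N$, which genuinely diverges (e.g.\ as $\sqrt{N}$ for dense Erd\H os--R\'enyi), so the bulk of the spectrum cannot be absorbed into a single Gaussian MGF inequality. The spectral-truncation-plus-H\"older strategy is needed precisely to separate the finitely many large eigenvalues (handled by the CLT and the $\chi^2$-MGF formula) from the small-operator-norm bulk residual (handled by its Frobenius norm alone, through Hanson--Wright).
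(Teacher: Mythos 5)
Your route for part (a) via spectral truncation, H\"older, and Hanson--Wright may well be made to work, but the ``principal obstacle'' you identify does not exist, and because of that your proof is far more elaborate than needed. The Gaussian comparison bound is exactly what the paper uses (Lemma~\ref{bound}(a)): since $J_N=A(G_N)/N$ has non-negative entries, the moment comparison $\E_0 e^{\frac\beta2\sigma'J_N\sigma}\le\E\, e^{\frac\beta2 W'J_NW}$ gives
\[
F_N(\beta)\le -\tfrac12\sum_{i}\log\bigl(1-\beta\lambda_i(J_N)\bigr)
\]
for $\beta<1/\|J_N\|$. This does \emph{not} blow up with $\operatorname{tr}(A(G_N)_+)/N$, because $\operatorname{tr}(J_N)=0$ (zero diagonal) lets you subtract $\beta\lambda_i$ from each term: $-\log(1-x)-x\le Cx^2$ on a compact interval gives
\[
F_N(\beta)\le \tfrac{C\beta^2}{2}\sum_i\lambda_i(J_N)^2=\tfrac{C\beta^2}{2}\|J_N\|_F^2,
\]
and $\|J_N\|_F^2=\operatorname{tr}(A(G_N)^2)/N^2\to\|W\|_2^2<\infty$ by cut-metric convergence. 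Your numerology $\operatorname{tr}(A(G_N)_+)/N\asymp\sqrt N$ is correct but irrelevant: the linear terms cancel, so the bulk spectrum only enters through the Frobenius norm, which is tight. Once this is in hand, part (a) follows immediately from Corollary~\ref{mplepositive}(b)/Theorem~\ref{genfixed}; no truncation or Hanson--Wright is needed.

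For part (b), your idea (mean-field lower bound with a product measure aligned to the top eigenvector) is the right one, but you implement it at finite $N$ with the eigenvector $v_1$ of $A(G_N)$ and claim $\sqrt N\|v_1\|_\infty\le C$ ``follows from $L^\infty$-boundedness of $\psi_1$.'' This does not follow. Cut-metric convergence gives you no pointwise control of $v_1(A(G_N))$ in terms of the eigenfunction $\psi_1$ of $T_W$: a vanishing fraction of vertices with anomalous local structure can localize $v_1$ while having negligible effect on the cut norm, so the delocalization statement you need is not a consequence of the hypotheses as stated. The paper avoids this entirely by passing to the limit first. It invokes the Borgs--Chayes--Lov\'asz--S\'os--Vesztergombi free-energy limit $\frac1N F_N(\beta)\to\sE(W,\beta)$, expresses $\sE(W,\beta)$ as the variational problem over $m:[0,1]\to[-1,1]$, and then plugs in $m(x)=zv_1(x)$ with $v_1$ the top eigen\emph{function} of $T_W$, which is bounded because $T_W$ maps $L^2$ into $L^\infty$. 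The second-derivative check $h''(0)=(\beta\|W\|-1)\int v_1^2>0$ then gives $\sE(W,\beta)>0$, which feeds into Corollary~\ref{sc}. If you want to keep a finite-$N$ argument without the limit theorem, you must either prove eigenvector delocalization under cut-metric convergence or instead plug a bounded trial function (e.g.\ a step-function discretization of $\psi_1$, not the matrix eigenvector) into the mean-field bound of Lemma~\ref{meanfield}.
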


The proof of the theorem is given in Section~\ref{pfdense}. It
involves showing that $F_N(\beta_0)=O(1)$ whenever $0 \leq\beta
_0<\frac{1}{\llVert W\rrVert }$, for any converging sequence of graphs,
which together with Theorem~\ref{genfixed} proves (a). To show (b) it
suffices to show that $\lim_{N\rightarrow\infty}\frac
{1}{N}F_N(\beta_0)>0$, for $\beta_0> \frac{1}{\llVert W\rrVert }$
(by Corollary~\ref{sc}). For Ising models on a convergence sequence of
graphs, $\lim_{N\rightarrow\infty}\frac{1}{N}F_N(\beta_0)$ is
given by a
variational problem (\ref{denseopt}) (cf. \cite{graph_limits_II},
Theorem~2.14). Even though explicitly solving this
variational problem
for large values of $\beta$ is extremely difficult, a simple argument
can be used to show that the value of the variational problem is
positive for $\beta> \frac{1}{\llVert W\rrVert }$.

%%%%%%%%%%%%%%%%%%%%%%%%%%%%%%%%%%%%%%%%%%%%%%%%%%%%%%%%%%%%%%%%%%%%%%%%%%%%%%%%%

%\begin{figure*}[h]
%\centering
%\begin{minipage}[c]{1.0\textwidth}
%\centering
%\includegraphics[width=2.8in,height=2.8in]
% {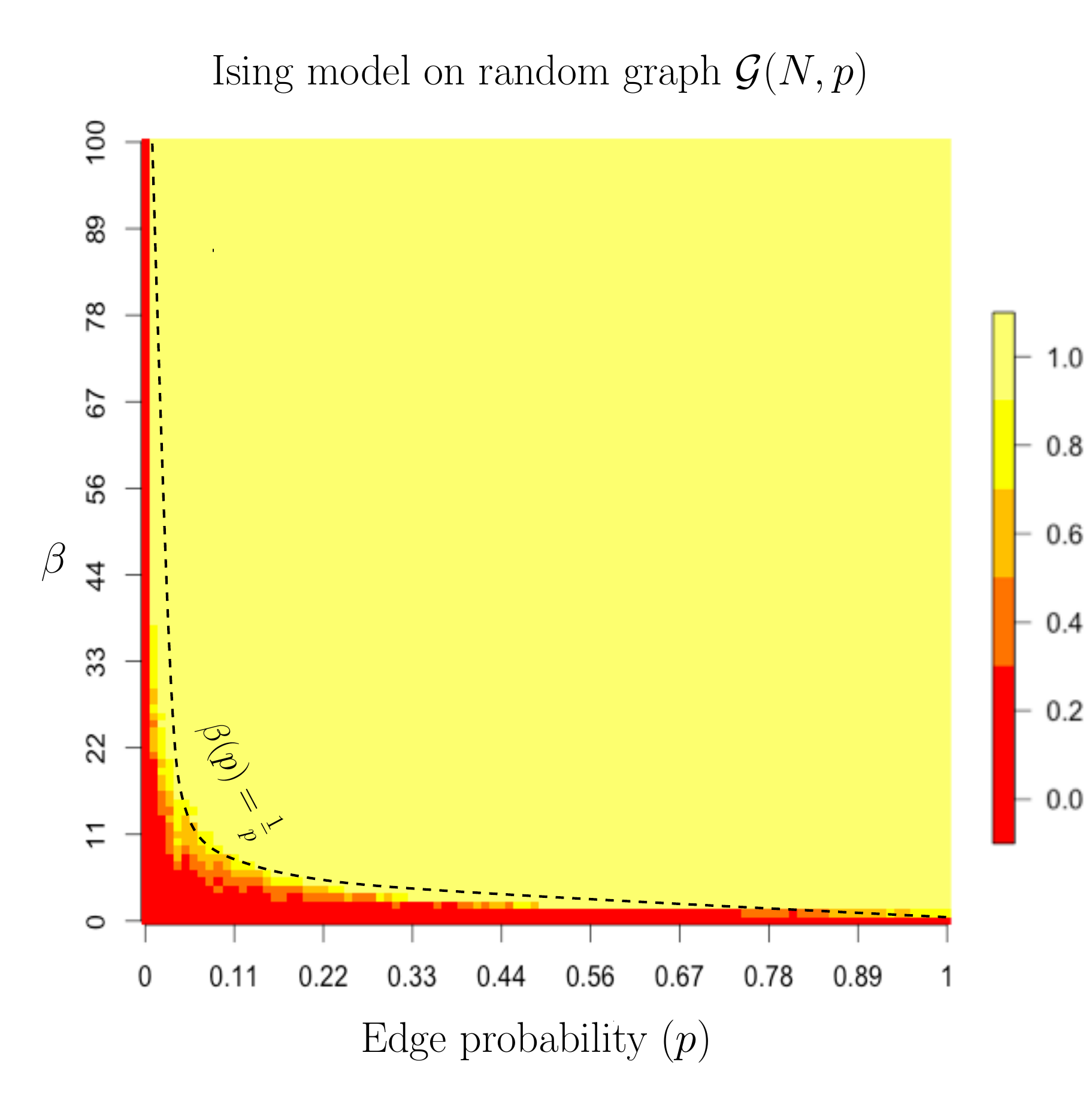}
%\end{minipage}
%%\begin{minipage}[c]{0.49\textwidth}
%%\centering
%%\includegraphics[width=3.0in]
%% {KNPowerL.pdf}
%%\end{minipage}
%
%\end{figure*}
%%%%%%%%%%%%%%%%%%%%%%%%%%%%%%%%%%%%%%%%%%%%%%%%%%%%%%%%%%%%%%%%%%%%%%%%%%%%%%%%%%%%%%%%%%%%%%%%%%%%%%%%%%%

By the Neyman--Pearson lemma, the most-powerful (MP) test for (\ref
{testingbeta12}) is based on the sufficient statistic $H_N(\sigma)$.
By Theorem~\ref{dense_testing}, the test based on $H_N(\sigma)$ is
not consistent (see Figure~\ref{randomgraphpower}). However, the
asymptotic power of the MP-test can be derived from the limiting
distribution of $H_N(\sigma)$, for any $\beta< \frac{1}{\llVert
W\rrVert }$.

%
%f2 #&#
%

%%%%%%%%%%%%%%%%%%%%%%%%%%%%%%%%%%%%%%%%%%%%%%%%%%%%%%%%%%%%%%%%%%%%%%%%%%%%%%%
\begin{figure*}[h]
\centering
\begin{minipage}[c]{1.0\textwidth}
\centering
\includegraphics[width=2.8in,height=2.8in]
    {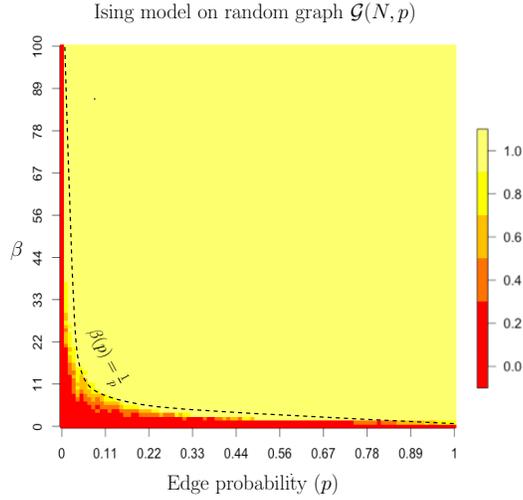}\\
\end{minipage}
%\begin{minipage}[c]{0.49\textwidth}
%\centering
%\includegraphics[width=3.0in]
%    {KNPowerL.pdf}\\
%\end{minipage}
\caption{\footnotesize{The power of the MP-test for the Ising model on an Erd\H os-R\'enyi random graph $\cG(N, p)$ as a function of $p$ and $\beta$, with $N=500$. Every point $(p, \beta)$ in the grid shows the empirical power of the MP-test averaged over 100 repetitions. Note the phase transition curve $\beta(p)=\frac{1}{p}$  above which the MP-test has power 1, as predicted by Theorem~\ref{dense}.}}\vspace{-0.1in}
\label{randomgraphpower}
\end{figure*}
%%%%%%%%%%%%%%%%%%%%%%%%%%%%%%%%%%%%%%%%%%%%%%%%%%%%%%%%%%%%%%%%%%%%%%%%%%%%%%%%%%%%%%%%%%%%%%%%%%%%%%%%%

%th3.4 #&#
%
\begin{thm}\label{dense}
Let $\{G_N\}_{N\geq1}$ be a sequence of simple graphs which converges in cut-metric to $W \in\sW$, with
$\int_{[0, 1]^2}W(x, y)\,\mathrm{d} x\,\mathrm{d} y>0$. If $\sigma
\sim\P
_{\beta}$, then for $\beta<\frac{1}{\llVert W\rrVert }$
%
%e3.7 #&#
%
\begin{equation}
H_N(\sigma)= \frac{1}{N}\sigma'
A(G_N)\sigma\dto\sum_{i=1}^\infty
\lambda_i(W) \biggl(\frac{1}{1-\beta\lambda_i(W)} \xi_i-1 \biggr),
\label{limitdense}
\end{equation}
where $\xi_1,\xi_2,\ldots, $ are i.i.d. $\chi_1^2$ random variables.
\end{thm}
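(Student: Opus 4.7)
My strategy is to derive the limiting distribution by computing the moment generating function (MGF) of $H_N(\sigma)$ via the exponential family identity
\[
\E_\beta\bigl[e^{t H_N(\sigma)}\bigr] \;=\; e^{F_N(\beta + 2t) - F_N(\beta)},
\]
which reduces the distributional question to identifying the pointwise limit of the log-partition function $F_N(\alpha)$ on a real neighborhood of $\beta$. The hypothesis $\beta < 1/\|W\|$, combined with the proof of Theorem~\ref{dense_testing}(a), gives $F_N(\alpha) = O(1)$ on that range, so the limit exists along subsequences, and the main task is to identify it and then appeal to L\'evy's continuity theorem in MGF form.

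To identify $\lim F_N(\alpha)$ I would spectrally decompose $J_N := A(G_N)/N = \sum_{k=1}^N \mu_k^{(N)} u_k^{(N)}(u_k^{(N)})'$ and write $H_N(\sigma) = \sum_k \mu_k^{(N)} X_k^2$ with $X_k := (u_k^{(N)})'\sigma$. Graph-limit theory for $G_N \to W$ in cut-metric yields: for each fixed $k$, $\mu_k^{(N)} \to \lambda_k(W)$; $\sum_k (\mu_k^{(N)})^2 \to \|W\|_2^2 = \sum_k \lambda_k(W)^2$; and the top-$K$ eigenvectors, viewed as step functions on $[0,1]$, converge in $L^2$ to the eigenfunctions of $T_W$, which are bounded since $W \in [0,1]$. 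Their delocalization verifies Lindeberg's condition, so under $\P_0$ (i.i.d.\ Rademacher) a multivariate CLT gives $(X_1^{(N)}, \dots, X_K^{(N)}) \dto (Z_1, \dots, Z_K)$ with $Z_k$ i.i.d.\ $N(0,1)$. Combined with a Chebyshev-type tail bound (the variance of the tail $\sigma'(\sum_{k>K}\mu_k^{(N)} u_k^{(N)}(u_k^{(N)})')\sigma$ equals $2\sum_{k>K}(\mu_k^{(N)})^2$, which vanishes as $K \to \infty$ uniformly in $N$), I would pass to the limit in the MGF of the truncated quadratic form and obtain
\[
F_N(\alpha) \;\longrightarrow\; -\tfrac{1}{2}\sum_{k=1}^\infty \bigl[\log(1 - \alpha\lambda_k(W)) + \alpha\lambda_k(W)\bigr].
\]
The key structural observation is that $\mathrm{tr}(J_N) = 0$ (no self-loops) kills the $O(\alpha)$ term of $F_N(\alpha)$ at every finite $N$, and this centering appears in the limit through the tail: the $\P_0$-mean of $\sum_{k>K}\mu_k^{(N)} X_k^2$ is $-\sum_{k\le K}\mu_k^{(N)}$, whose limit $-\sum_{k\le K}\lambda_k(W)$ combines with the top-$K$ Gaussian contribution $\sum_{k\le K}\lambda_k(W)\xi_k/(1-\beta\lambda_k(W))$ under $\P_\beta$ to produce exactly the ``$-1$'' correction $\lambda_k(W)(\xi_k/(1-\beta\lambda_k(W)) - 1)$. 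Substituting back into the MGF identity, $\E_\beta[e^{t H_N(\sigma)}]$ converges to the MGF of $\sum_k \lambda_k(W)(\xi_k/(1-\beta\lambda_k(W)) - 1)$ with i.i.d.\ $\xi_k \sim \chi^2_1$, proving the claim.

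The main obstacle will be making the truncation argument uniform in $N$: the limit is an infinite spectral sum while each $J_N$ has finite rank, and interchanging the $K \to \infty$ and $N \to \infty$ limits requires uniform exponential-moment control on the tail $\sum_{k>K}\mu_k^{(N)} X_k^2$. The assumption $\beta < 1/\|W\|$ is essential precisely here: it keeps every factor $1-\alpha\lambda_k(W)$ bounded below uniformly in $k$, which makes the infinite product $\prod_k(1-\alpha\lambda_k(W))^{-1/2}$ convergent and bounds the tail contribution to $F_N(\alpha)$ by a quantity of order $\sum_{k>K}\lambda_k(W)^2 \to 0$.
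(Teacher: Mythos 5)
Your overall strategy is the same as the paper's: both arguments run through the exponential-family identity $\E_{\beta}e^{\frac{t}{2}H_N(\sigma)}=e^{F_N(\beta+t)-F_N(\beta)}$, the existence and identification of $\lim_N F_N$ below $1/\llVert W\rrVert$ (the paper's Proposition~\ref{PARTITION}), and the recognition of the limit as the log-MGF of $\sum_i\lambda_i(W)(\frac{\xi_i}{1-\beta\lambda_i(W)}-1)$ (the paper's Lemma~\ref{lem:exists}). The difference is how the limit of $F_N$ is obtained. The paper never touches eigenvectors: it imports the CLT $\frac{1}{N}\sigma'A(G_N)\sigma\dto\sum_i\lambda_i(W)(\xi_i-1)$ under $\P_0$ from \cite{BDM}, Theorem~1.4, and upgrades it to convergence of $F_N(\gamma)=\log\E_0 e^{\frac{\gamma}{2}H_N(\sigma)}$ via uniform integrability, which comes from the entrywise-nonnegativity Gaussian comparison of Lemma~\ref{bound}(a): $F_N(\gamma)\le\sum_i\{-\frac12\log(1-\gamma\lambda_i(W_N)/N)-\frac{\gamma}{2}\lambda_i(W_N)/N\}\le M\gamma^2\sum_i\lambda_i(W_N)^2/N^2$, which stays bounded since $\gamma\lambda_1(W_N)/N\to\gamma\llVert W\rrVert<1$ and $\sum_i\lambda_i(W_N)^2/N^2\to\llVert W\rrVert_2^2$. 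You instead re-derive the $\P_0$-CLT from scratch, and that is where your sketch has two genuine soft spots.

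First, the eigenvector step: cut-metric convergence gives convergence of eigenvalues and of $\llVert W_{G_N}\rrVert_2$, but $L^2$-convergence of the top eigenvectors (as step functions) to eigenfunctions of $T_W$ is not a black-box fact — it holds only after composing with the measure-preserving maps realizing the convergence, and only as convergence of spectral subspaces when eigenvalues are repeated or nearly degenerate; you would need to handle ties (or argue via homomorphism densities/moments as in \cite{BDM}) before Lindeberg can even be invoked. Second, and more seriously, your tail-control mechanism is not valid as stated: saying that $\beta<1/\llVert W\rrVert$ keeps $1-\alpha\lambda_k(W)$ bounded below and hence bounds ``the tail contribution to $F_N(\alpha)$'' by $\sum_{k>K}\lambda_k(W)^2$ implicitly factorizes the exponential moment across eigenvalues, i.e.\ treats the projections $X_k=(u_k^{(N)})'\sigma$ as independent Gaussians. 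For Rademacher spins they are uncorrelated but not independent, and the chi-square MGF computation is only available through the Gaussian comparison of Lemma~\ref{bound}(a), which requires a matrix with nonnegative entries; the tail projection $\sum_{k>K}\mu_k^{(N)}u_k^{(N)}(u_k^{(N)})'$ has entries of both signs, so that lemma does not apply to it. To close this you would need a Hanson--Wright-type sub-Gaussian bound for the tail quadratic form, or, more simply, abandon tail-wise exponential control and prove uniform integrability of the full $e^{\frac{\gamma}{2}H_N(\sigma)}$ exactly as the paper does. With these two repairs your route works, and amounts to a self-contained proof of the ingredient the paper cites.
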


Hereafter, the random variable in the RHS of (\ref{limitdense}) will
be denoted by $Q_{\beta, W}$ and can be used to compute the asymptotic
power for the test based on $H_N(\sigma)$ for the testing problem
(\ref{testingbeta12}), when $0\leq\beta_1<\beta_2<\frac
{1}{\llVert W\rrVert }$. To this end, we need the following definition:

%de3.1 #&#
%
\begin{defn} \label{not}
Let $W\in\sW$ and $\beta< \frac{1}{\llVert W\rrVert }$. Denote by
$F_{\beta,
W}$ the distribution function of the random variable $Q_{\beta, W}$
defined in (\ref{limitdense}). Also, let $q_{1-\alpha, \beta, W}$ be
the $(1-\alpha)$th quantile of $F_{\beta, W}$, that is, $\P_\beta
(Q_{\beta, W} \geq q_{1-\alpha, \beta, W})=\alpha$.
\end{defn}

The following corollary is an immediate consequence of the
Neyman--Pearson lemma and Theorem~\ref{dense}.

%co3.5 #&#
%
\begin{cor}Fix $\alpha\in(0, 1)$ and $0\leq\beta_1<\beta_2<\frac
{1}{\llVert W\rrVert }$. The most powerful level $\alpha$ test for
(\ref
{testingbeta12}) rejects $H_0$ when $H_N(\sigma)>q_{1-\alpha, \beta
_1, W}$, and has limiting power
%
%e3.8 #&#
%
\begin{equation}
\lim_{N\rightarrow\infty}\P_{\beta_2}\bigl(H_N(
\sigma)>q_{1-\alpha,
\beta_1, W}\bigr)=1-Q_{\beta_2, W}(q_{1-\alpha, \beta_1, W}).
\label{pb1}
\end{equation}
\end{cor}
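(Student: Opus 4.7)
The plan is to combine the Neyman--Pearson lemma with the weak convergence established in Theorem~\ref{dense}. First, I would write out the likelihood ratio for the simple-versus-simple problem \eqref{testingbeta12}. From \eqref{pmf},
\[
\frac{\P_{\beta_2}(\sigma=\tau)}{\P_{\beta_1}(\sigma=\tau)}=\exp\biggl\{\frac{1}{2}(\beta_2-\beta_1)H_N(\tau)+F_N(\beta_1)-F_N(\beta_2)\biggr\},
\]
which is a strictly increasing function of $H_N(\tau)$ since $\beta_2>\beta_1$. By the Neyman--Pearson lemma any test of the form ``reject $H_0$ when $H_N(\sigma)>c$'' is the most powerful test at its own level, so it only remains to identify the threshold yielding asymptotic level $\alpha$ and to compute the resulting limiting power.

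Next I would invoke Theorem~\ref{dense}. Under $\P_{\beta_1}$ the statistic $H_N(\sigma)$ converges weakly to $Q_{\beta_1,W}$, and under $\P_{\beta_2}$ it converges weakly to $Q_{\beta_2,W}$. Because $Q_{\beta,W}=\sum_{i\ge 1}\lambda_i(W)\bigl(\xi_i/(1-\beta\lambda_i(W))-1\bigr)$ contains at least one non-degenerate $\chi^2_1$ summand (indeed, every summand is absolutely continuous and they are independent), the distribution function $F_{\beta,W}$ is continuous everywhere, and in particular at $q_{1-\alpha,\beta_1,W}$. Hence by the portmanteau theorem
\[
\lim_{N\rightarrow\infty}\P_{\beta_1}\bigl(H_N(\sigma)>q_{1-\alpha,\beta_1,W}\bigr)=1-F_{\beta_1,W}(q_{1-\alpha,\beta_1,W})=\alpha,
\]
so the proposed test has asymptotic level exactly $\alpha$, and likewise
\[
\lim_{N\rightarrow\infty}\P_{\beta_2}\bigl(H_N(\sigma)>q_{1-\alpha,\beta_1,W}\bigr)=1-F_{\beta_2,W}(q_{1-\alpha,\beta_1,W}),
\]
which is the formula in \eqref{pb1} (reading the symbol $Q_{\beta_2,W}(\cdot)$ in the statement as the associated CDF $F_{\beta_2,W}$ in the sense of Definition~\ref{not}).

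There is essentially no obstacle beyond these two ingredients. The only mildly delicate point is justifying that $F_{\beta_2,W}$ (and $F_{\beta_1,W}$) is continuous at the threshold, which I would dispatch by the one-line observation above about absolute continuity of the limit law; this legitimises applying weak convergence at the fixed quantile rather than only at continuity points one has to hunt for. Everything else is a direct read-off from Theorem~\ref{dense} and the monotonicity of the likelihood ratio in $H_N(\sigma)$.
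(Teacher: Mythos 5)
Your proof is correct and follows exactly the route the paper intends: the Neyman--Pearson lemma (via monotonicity of the likelihood ratio in $H_N(\sigma)$) combined with the weak convergence from Theorem~\ref{dense}, and you rightly read the symbol $Q_{\beta_2,W}(\cdot)$ in the displayed formula as the CDF $F_{\beta_2,W}$ of Definition~\ref{not}. Your added observation that the limit law is absolutely continuous---since $\int W>0$ forces a nonzero eigenvalue $\lambda_i(W)$---so that $q_{1-\alpha,\beta_1,W}$ is a continuity point and the portmanteau theorem applies, makes explicit a detail the paper leaves implicit in calling the corollary ``an immediate consequence.''
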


%%%%%%%%%%%%%%%%%%%%%%%%%%%%%%%%%%%%%%%%%%%%%%%%%%%%%%%%%%%%%%%%%%%%%%%%%%%%%%%%%%
%\begin{figure*}[h]
%\centering
%\begin{minipage}[c]{1.0\textwidth}
%\centering
%\includegraphics[width=2.8in]
% {KNPowerL.pdf}
%\end{minipage}
%%\begin{minipage}[c]{0.49\textwidth}
%%\centering
%%\includegraphics[width=3.0in]
%% {KNPowerL.pdf}
%%\end{minipage}
%
%\end{figure*}
%%%%%%%%%%%%%%%%%%%%%%%%%%%%%%%%%%%%%%%%%%%%%%%%%%%%%%%%%%%%%%%%%%%%%%%%%%%%%%%%%%%%%%%%%%%%%%%%%%%%%%%%%%%

In most of the relevant examples, the limiting graphon $W$ has finitely
many non-zero eigenvalues, and the expression on the RHS of (\ref
{pb1}) can be computed easily in terms of the quantiles of the
chi-squared distribution.

%ex2 #&#
%
\begin{example}Suppose $G_N\sim\cG(N, p)$ be a Erd\H os--R\'enyi
random graph with $0< p \leq1$. Then $G_N$ converges to the constant
function $W_p:\equiv p$ on $[0, 1]^2$, which has only one non-zero
eigenvalue $\lambda_1(W_p)=p$. Therefore, consistent testing is
impossible for $0\leq\beta<\frac{1}{p}$ (see Figure~\ref
{randomgraphpower}). Moreover, for $\beta< 1/p$, (\ref{pb1})
simplifies to
\[
H_N(\sigma) \dto Q_{\beta, W_p}=p \biggl(\frac{1}{1-\beta p} \chi
^2_{1}-1 \biggr).
\]
If $q_{1-\alpha}$ denotes the $(1-\alpha)$th quantile of the $\chi
^2_1$ distribution, then by (\ref{pb1}), the limiting power of the
test with rejection region $\{H_N(\sigma)>c_\alpha:=p(q_{1-\alpha
}-1)\}$ for the testing problem $\beta=0$ versus $\beta=\beta_0<
1/p$ is
%
%e3.9 #&#
%
\begin{equation}
\lim_{N\rightarrow\infty}\P_{\beta_0}\bigl(H_N(
\sigma)>c_{\alpha}\bigr)=\P\bigl(\chi^2_1>(1-
\beta_0 p)q_{1-\alpha}\bigr). \label{kngraphpower}
\end{equation}
The limiting power of the MP-test for the Curie--Weiss model (which
corresponds to taking $p=1$ in \eqref{kngraphpower}) is shown in
Figure~\ref{limitingpower}. Note that it has a phase transition at
$\beta=1$, as stated in Theorem~\ref{dense_testing}.
\end{example}

%f3 #&#
%

%%%%%%%%%%%%%%%%%%%%%%%%%%%%%%%%%%%%%%%%%%%%%%%%%%%%%%%%%%%%%%%%%%%%%%%%%%%%%%%
\begin{figure*}[h]
\centering
\begin{minipage}[c]{1.0\textwidth}
\centering
\includegraphics[width=2.8in]
    {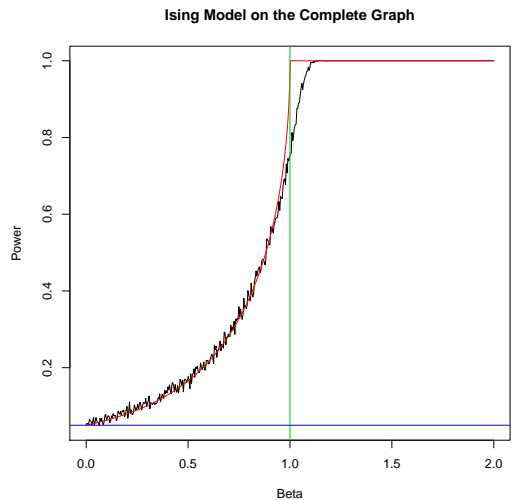}\\
\end{minipage}
%\begin{minipage}[c]{0.49\textwidth}
%\centering
%\includegraphics[width=3.0in]
%    {KNPowerL.pdf}\\
%\end{minipage}
\caption{\footnotesize{The power of the MP-test in the Curie Weiss Model as a function of $\beta$; the black curve is the empirical power for the Curie-Weiss model with $N=500$ and 1000 repetitions at each point along a sequence of values (of length 500) of $\beta \in [0, 2]$. The red curve is the limiting power function (corresponds to taking $p=1$ in (\ref{kngraphpower})) as a function of $\beta\in [0, 2]$. The blue line corresponds to the level $\alpha=0.05$ of the test.}}
\label{limitingpower}\vspace{-0.1in}
\end{figure*}
%%%%%%%%%%%%%%%%%%%%%%%%%%%%%%%%%%%%%%%%%%%%%%%%%%%%%%%%%%%%%%%%%%%%%%%%%%%%%%%%%%%%%%%%%%%%%%%%%%%%%%%%%

%re3.2 #&#
%
\begin{remark} Note that throughout the paper, the term \textit{phase
transition} has been is used to imply a change in the rate of
consistency of the pseudo-likelihood estimate  $\hat{\beta}_N$. Interestingly, in
all our examples (Corollaries \ref{dregular}, \ref
{erdosrenyirandom} and Theorem~\ref{dense_testing}) the change in the
rate of consistency happens exactly at the point of thermodynamic phase
transition, that is, prior to this phase transition point the
log-partition function is $o(N)$, whereas after the phase-transition
point the log-partition function scales linearly with $N$. In fact, in
the setting of Corollary~\ref{dregular}, the limiting log-partition
function is continuous but not differentiable at the phase transition
point $\beta=1$ (see \cite{BM}, Theorem~2.2(b)). Similar statements
about the non-differentiability of the limiting log-partition function
should also hold for the other two examples, but since they are not directly used in our calculations, this direction has
not been pursued.
\end{remark}

%s4 #&#
\section{Analysis of the facebook dataset}
\label{sec:datasets}

Ising models have been widely used to understand correlations among
neighboring vertices in network data with binary node attributes. Here,
we use it to study the effect of gender in Facebook friendship-networks
using data from the Stanford Large Network Dataset (SNAP) collection,
available freely at \url
{http://snap.stanford.edu/data/egonets-Facebook.html}. The nodes are
groups of users from Facebook and there is an edge between two users if
they are friends. The dataset also include several anonymized node
features, such as hometown, gender, birthday, school, and university.
We consider two networks (referred to as FB1 and FB2) with gender as
the binary node feature, encoding, without loss of generality, male by
1 and female by $-1$. The nodes labelled 1 are colored blue and those
labelled $-1$ are colored red. The FB1 network has 221 nodes and 3176
edges. Among the 221 nodes, 170 are labelled 1 and 51 are labelled $-1$.
The FB2 network has 333 nodes and 2519 edges, with 213 nodes labelled 1
and 120 labelled~$-1$.\looseness=1

%%%%%%%%%%%%%%%%%%%%%%%%%%%%%%%%%%%%%%%%%%%%%%%%%%%%%%%%%%%%%%%%%%%%%%%%%%%%%%%%%

%\begin{figure}
%\begin{tabular}{@{}c@{}}
%\begin{tabular*}{\textwidth}{@{\extracolsep{\fill}}lll@{}}
%\hline
% & FB1 & FB2 \\
%\hline
%(Vertices, Edges)& (221, 3176) & (333, 2519) \\
%Average Degree & 28.74 & 15.13\\
%%\hline
%%$\lambda_{\max}^{-1}$ & 0.5771 & 0.4079\\
%MPLE & 1.0518 & 0.8530\\
%$p$-value & 0.0045 & 0.0001\\
%\hline
%\end{tabular*} \\[6pt]
% \figlink{886f04}
%\end{tabular}
%\caption{Facebook friendship-network: The table gives the MPLE of $
%\beta$ for
%Ising models on two Facebook friendship-networks and the corresponding
%$p$-values for
%testing independence. The plot shows the empirical (resampled)
%error-bars for the MPLE in the two networks.}
%\label{fig:fbestimates}
%\end{figure}

%\begin{figure}
%\begin{tabular}{@{}c@{ }c@{}}
%\begin{minipage}{120pt}
%\begin{figure}
%\figlink{886f04}
%\end{figure}
%\end{minipage}
%&
%\begin{minipage}{210pt}
%
%\vspace*{9.5pt}\end{minipage}
%\end{tabular}
%
%\end{figure}

%
%\begin{figure*}[h]
%
%\centering
%\begin{minipage}[l]{0.495\textwidth}
%\scriptsize
%
%\end{minipage}
%\begin{minipage}[r]{0.495\textwidth}
%\includegraphics[width=2.2in]
% {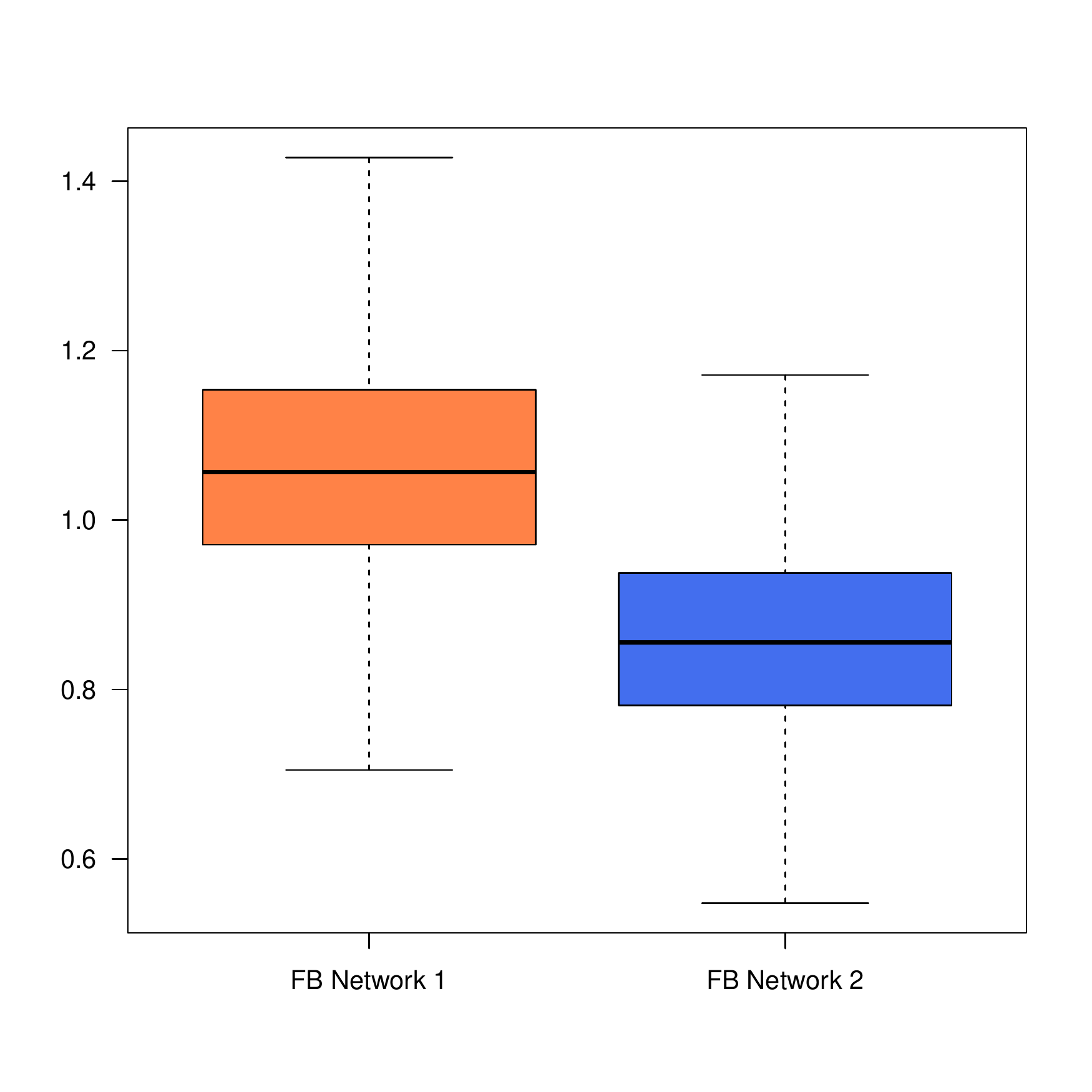}
%\end{minipage}
%
%\end{figure*}
%%%%%%%%%%%%%%%%%%%%%%%%%%%%%%%%%%%%%%%%%%%%%%%%%%%%%%%%%%%%%%%%%%%%%%%%%%%%%%%%%%%%%%%%%%%%%%%%%%%%%%%%%%%%

In order to understand how gender correlates with friendship, we fit
Ising models on the two networks. The MPLE for $\beta$ corresponding
to the two networks are given in the table in Figure~\ref
{fig:fbestimates}. This can be used to test the null hypothesis that
gender does not correlate with friendship. The $p$-values show that the
null hypothesis is rejected at the 5\% level in both cases, suggesting,
as expected, significant correlation in the friendship-network based on
gender. The MPLE in FB1 is larger, which suggests a stronger
gender-based correlation in FB1, which might be due to the larger
male-to-female ratio in FB1 than in FB2.

%%%%%%%%%%%%%%%%%%%%%%%%%%%%%%%%%%%%%%%%%%%%%%%%%%%%%%%%%%%%%%%%%%%%%%%%%%%%%%%
\begin{figure*}[h]
\vspace{-0.25in}
\centering
\begin{minipage}[l]{0.495\textwidth}
\scriptsize
\begin{tabular}{c|c|c}
\hline
 & FB1 &  FB2 \\
\hline
(Vertices, Edges)& (221, 3176) &  (333, 2519) \\
\hline
Average Degree & 28.74 & 15.13\\
%\hline
%$\lambda_{\max}^{-1}$ & 0.5771 & 0.4079\\
\hline
MPLE &  1.0518 & 0.8530\\
\hline
$p$-value & 0.0045  & 0.0001\\
\hline
\end{tabular}
\end{minipage}
\begin{minipage}[r]{0.495\textwidth}
\includegraphics[width=2.2in]
    {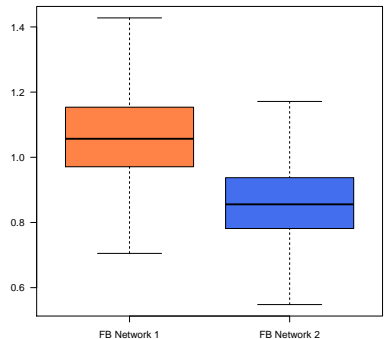}\\ 
\end{minipage}\vspace{-0.2in}
\caption{\footnotesize{Facebook friendship-network: The table gives the MPLE of $\beta$ for Ising models on two Facebook friendship-networks and the corresponding $p$-values for testing independence. The plot shows the empirical (resampled) error-bars for the MPLE in the two networks.}}
\label{fig:fbestimates}\vspace{-0.15in}
\end{figure*}
%%%%%%%%%%%%%%%%%%%%%%%%%%%%%%%%%%%%%%%%%%%%%%%%%%%%%%%%%%%%%%%%%%%%%%%%%%%%%%%%%%%%%%%%%%%%%%%%%%%%%%%%%

Figure~\ref{fig:fbestimates} also shows the error bars for the MPLE
calculated using parametric bootstrap: $10^5$ realizations of the Ising
model were resampled using the original MPLE, which then gives an
estimate of the standard error of the MPLE. Note that the error bar for
FB1 is slightly longer than that for FB2. This might be because the FB1
network, with average degree 28.74, is significantly dense than FB2,
which has average degree 15.13. %This is also supported by Theorem~
\section{Proof of consistency of the MPLE}
\label{pfmain}

This section contains the proof of Theorem~\ref{GEN}. The technical
lemmas required for the proof are listed in Section~\ref
{pftechnicallemmas} and proved later in Appendix \ref
{pflem1}. Using this, we complete the proof of the theorem
in Section~\ref{pfgen}. Corollary~\ref{sc} is proved in Section~\ref
{pfcorchatterjee}.

%\begin{remark}
%It suffices to prove theorem \ref{GEN} for $\beta_0>0$. This is
%because if $\beta_0<0$ then defining $\widetilde{\beta}=-\beta$ we
%have $\widetilde{\beta}_0=-\beta_0>0$. Also setting $
%\widetilde{J}_N=-J_N$ it follows that $\widetilde{J}_N$ has same
%operator norm and Frobenius norm as $J_N$. With this in mind the rest
%of this section assumes $\beta_0>0$. As a consequence of this
%assumption we can replace \eqref{maincond} by the condition
%\begin{align}\label{maincond_positive}
%&0<\lim_{\delta\rightarrow0}\liminf_{N\rightarrow\infty}
%\frac{1}{a_N}F_N(\beta_0-\delta)\le\lim_{\delta\rightarrow0}\limsup_{N
%\rightarrow\infty}\frac{1}{a_N}F_N(\beta_0+\delta)<\infty.
%\end{align}

%\end{remark}

%s5.1 #&#
\subsection{Technical lemmas}
\label{pftechnicallemmas}

%We begin by proving the following Lemma~\ref{lem1}, part (b) of which
%generalizes the conclusion of \cite{Chatterjee}, Lemma~2.3, 2.4, 2.5,
%to give a simpler proof of the fact
%that a lower bound to the order of the sufficient statistic $H_N(
%\sigma)$ is given by the order of the log-partition function, i.e. the
%sequence of random variables $\frac{1}{a_N}H_N(\sigma)$ do not tend to
%$0$ in distribution. Part (c) of the Lemma shows that the lower bound
%is tight, in the sense that the sequence $\frac{1}{a_N}H_N(\sigma)$ is
%$O_P(1)$. Parts (b) and (c) together thus determine the correct order
%of fluctuation of $H_N(\sigma)$ for all Ising models satisfying
%\eqref{maincond}

The proof of Theorem~\ref{GEN} requires a few technical lemmas. We
begin by showing that in Ising models satisfying (\ref{maincond}), the
asymptotic order of the sufficient statistic $H_N(\sigma)$ is the same
as the order of the log-partition function, that is, (a) the sequence
$\frac{1}{a_N}H_N(\sigma)$ does not tend to $0$ in distribution, and
(b) $\frac{1}{a_N}H_N(\sigma)$ is $O_P(1)$. In fact, (b) is not
required in the rest of the proof, however we include it because,
together with (a), it gives the correct order of $H_N(\sigma)$, which
appears to be new and might be of independent interest. The proof of
the lemma is given in Appendix \ref{pflem1}.

%le5.1 #&#
%
\begin{lem}\label{lem1}
Under assumption \eqref{maincond}, the following hold:
\begin{enumerate}[(a)]
\item[(a)]$\lim_{\varepsilon\rightarrow0}\limsup_{N\rightarrow\infty
}\P_{\beta_0}(H_N(\sigma)<\varepsilon a_N)=0$,

\item[(b)]$\lim_{K\rightarrow\infty}\limsup_{N\rightarrow\infty}\P
_{\beta_0}(H_N(\sigma)>K a_N)=0$.
\end{enumerate}
\end{lem}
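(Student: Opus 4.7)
My plan is to prove both parts via Chernoff-type inequalities based on the exponential-tilt identity
\begin{equation*}
\E_{\beta_0} e^{\lambda H_N(\sigma)} = \exp\bigl\{F_N(\beta_0 + 2\lambda) - F_N(\beta_0)\bigr\},
\end{equation*}
which is immediate from (\ref{pmf}) and (\ref{Z}). Markov's inequality then reduces both statements to estimating $F_N(\beta_0 \pm \delta) - F_N(\beta_0)$ against $a_N$, and the crucial ingredient I will need is a lower bound on $F_N(\beta_0)$ itself. The key observation is that $F_N$ is convex on $[0,\infty)$ with $F_N(0)=0$, so $\beta \mapsto F_N(\beta)/\beta$ is non-decreasing on $(0,\infty)$. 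Applying this with the pair $\beta_0 - \delta < \beta_0$ and combining with (\ref{maincond}) yields both
\begin{equation*}
F_N(\beta_0) \ge c_1 a_N \quad \text{and} \quad F_N(\beta_0) - F_N(\beta_0-\delta) \ge c_2 a_N
\end{equation*}
for positive constants $c_1, c_2$ and all $N$ large.

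Given this, part (b) follows by taking $\lambda = \delta/2$: since $F_N(\beta_0+\delta) \le C a_N$ for some $C$ and $F_N(\beta_0) \ge 0$, Markov yields $\P_{\beta_0}(H_N(\sigma) > K a_N) \le \exp\bigl(a_N[C - K\delta/2]\bigr)$, which vanishes once $K > 2C/\delta$. For part (a) I will take $\lambda = -\delta/2$, so that
\begin{equation*}
\P_{\beta_0}\bigl(H_N(\sigma) < \varepsilon a_N\bigr) \le e^{\varepsilon \delta a_N/2} \cdot e^{F_N(\beta_0-\delta) - F_N(\beta_0)} \le \exp\bigl(a_N[\varepsilon\delta/2 - c_2]\bigr),
\end{equation*}
which vanishes as soon as $\varepsilon < 2 c_2/\delta$. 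Sending $N \to \infty$ and then $K \to \infty$ (resp.\ $\varepsilon \to 0$) gives the two claims.

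The main obstacle, and really the only non-mechanical step, is establishing the order-$a_N$ lower bound on the gap $F_N(\beta_0) - F_N(\beta_0-\delta)$: the hypothesis (\ref{maincond}) by itself provides a two-sided control on $F_N(\beta_0 \pm \delta)$ but says nothing a priori about $F_N(\beta_0)$, and it is the convexity of $F_N$ together with the normalization $F_N(0)=0$ that allows one to pull the lower bound at $\beta_0-\delta$ through to a gap bound at $\beta_0$ via the secant-slope comparison. Once that is in hand, the rest of the argument is a routine Chernoff calculation and does not require any new structural information about $J_N$ beyond what is already encoded in the log-partition function.
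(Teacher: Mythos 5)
Your proposal is correct and follows essentially the same route as the paper: exponential tilting of $H_N$ combined with Markov's inequality, with the key gap bound $F_N(\beta_0)-F_N(\beta_0-\delta)\gtrsim a_N$ obtained from convexity of $F_N$ and the normalization $F_N(0)=0$. The paper phrases this via the derivative (showing $F_N'(\beta_0-\delta)\ge F_N(\beta_0-\delta)/(\beta_0-\delta)$ and then $F_N(\beta_0)-F_N(\beta_0-\delta)\ge\delta F_N'(\beta_0-\delta)$), while you use the equivalent secant-slope monotonicity of $F_N(\beta)/\beta$ directly; the one small thing to make explicit, which the paper also glosses over, is that one may assume $\delta<\beta_0$ without loss of generality (by shrinking $\delta$, using that $F_N$ is non-decreasing on $[0,\infty)$) so that $\beta_0-\delta>0$.
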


The next lemma is similar to the lemma in \cite{Chatterjee},
Lemma~1.2, where it was shown that the second moment of the
function $L_\sigma(\beta_0)$ is $O(1/N)$ whenever the log-partition
function scales like $N$. Here, by a finer analysis using part (a) of
Lemma~\ref{lem1} we show that the $\E_{\beta_0} (L_\sigma(\beta
_0)^2)=O(a_N/N^2)$, if the log-partition function has order $a_N$. The
proof of the lemma is given in Appendix \ref{pflem2moment}.

%le5.2 #&#
%
\begin{lem}
\label{2moment}
Let $L_\sigma$ be as defined in \eqref{s}. Then under the assumptions
in Theorem~\ref{GEN}, for $N$ large enough,
\[
\limsup_{N\rightarrow\infty}\frac{N^2}{a_N}\E_{\beta_0}
\bigl(L_\sigma(\beta_0)^2\bigr) <\infty.
\]
%
%where $C$ is a constant which depends on $\beta_0$ and $.%\footnote{
%\blue{Note that the constant $C(\beta_0)$ may also depend on the
%matrix $J_N$. However, the dependence on $J_N$ is supressed, because
%the result holds conditional on $J_N$ fixed.}}
\end{lem}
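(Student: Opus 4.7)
The plan is to use the Glauber exchangeable pair. Pick $I$ uniform on $\{1,\ldots,N\}$, resample $\sigma'_I$ from the conditional law of $\sigma_I$ given $\sigma_{-I}$, and set $\sigma'_j=\sigma_j$ for $j\neq I$. Then $F:=m_I(\sigma)(\sigma_I-\sigma'_I)$ is antisymmetric under the pair $(\sigma,\sigma')$ with $\E_{\beta_0}[F\mid\sigma]=L_\sigma(\beta_0)$, whence $\E_{\beta_0}[L_\sigma(\beta_0)^2]=\tfrac12\E_{\beta_0}[F\,(L_\sigma(\beta_0)-L_{\sigma'}(\beta_0))]$. On the event $\{\sigma'_I\neq\sigma_I\}$ we have $\sigma'=\sigma^{(I)}$, and a first-order Taylor expansion of $g(x):=x\tanh(\beta_0 x)$ gives $N(L_\sigma(\beta_0)-L_{\sigma^{(I)}}(\beta_0))=2\sigma_I\bigl[m_I+\sum_{i\neq I}J_N(i,I)(\sigma_i-g'(\xi_i))\bigr]$ with $|\xi_i-m_i|\le 2J_N(i,I)$. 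Using $\P(\sigma'_I=-\sigma_I\mid\sigma,I)=\tfrac12(1-\sigma_I\tanh(\beta_0 m_I))$ and marginalising over $I$,
\[
\E_{\beta_0}\bigl[L_\sigma(\beta_0)^2\bigr]=\frac{1}{N^2}\sum_{I=1}^N\E_{\beta_0}\Bigl[m_I\bigl(1-\sigma_I\tanh(\beta_0 m_I)\bigr)\Bigl\{m_I+\sum_{i\neq I}J_N(i,I)\bigl(\sigma_i-g'(\xi_i)\bigr)\Bigr\}\Bigr].
\]

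For the leading summand (the $m_I$ inside the braces), conditioning on $\sigma_{-I}$ gives $\E_{\beta_0}[1-\sigma_I\tanh(\beta_0 m_I)\mid\sigma_{-I}]=1-\tanh^2(\beta_0 m_I)$, reducing it to $\tfrac{1}{N^2}\E_{\beta_0}[\sum_I m_I^2(1-\tanh^2(\beta_0 m_I))]$. I would bound this by splitting at $|m_I|\le K$: for small $|m_I|$, monotonicity of $x\mapsto\tanh(\beta_0 x)/x$ yields $m_I^2\le\tfrac{K}{\tanh(\beta_0 K)}m_I\tanh(\beta_0 m_I)$, and $\E_{\beta_0}[\sum_I m_I\tanh(\beta_0 m_I)]=\E_{\beta_0}[H_N(\sigma)]=2F_N'(\beta_0)=O(a_N)$ (using convexity of $F_N$ together with \eqref{maincond}) handles this part; for $|m_I|>K$ the bound $x^2(1-\tanh^2(\beta_0 x))\le 4Ke^{-2\beta_0 K}|x|$, coupled with condition (a) of Theorem~\ref{GEN}, reduces the tail to $o(a_N)$ as $K\to\infty$.

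For the error summand, write $\sigma_i-g'(\xi_i)=a_i+b_i\sigma_I$ with $a_i,b_i$ measurable in $\sigma_{-I}$ (always possible since $\sigma_I\in\{\pm 1\}$). Because $\xi_i$ lies within $J_N(i,I)$ of $m_i^{-I}:=m_i-J_N(i,I)\sigma_I$ and $g'$ is Lipschitz, $|b_i|=O(J_N(i,I))$ and $a_i=\sigma_i-g'(m_i^{-I})+O(J_N(i,I))$. The cancellation $\E_{\beta_0}[\sigma_I(1-\sigma_I\tanh(\beta_0 m_I))\mid\sigma_{-I}]=0$ kills the $b_i\sigma_I$ term, leaving $\sum_I\E_{\beta_0}[m_I(1-\tanh^2(\beta_0 m_I))\sum_{i\neq I}J_N(i,I)a_i]$. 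Further expanding $a_i=X_i-\beta_0 m_i(1-\tanh^2(\beta_0 m_i))+O(J_N(i,I))$, where $X_i:=\sigma_i-\tanh(\beta_0 m_i)$, produces three pieces: (i) an $X_i$-driven piece that is $O(J_N(i,I)^2)$ per term after another linear decomposition of $m_I(1-\tanh^2(\beta_0 m_I))$ in $\sigma_i$, hence $O(a_N)$ summed, by condition (b); (ii) a cross piece $\beta_0\,\E_{\beta_0}[u'J_Nu]$ with $u_k:=m_k(1-\tanh^2(\beta_0 m_k))$, bounded by $\beta_0\|J_N\|\E_{\beta_0}[\sum_k m_k^2(1-\tanh^2(\beta_0 m_k))]=O(a_N)$; and (iii) a residual of order $\sum_{i,I}J_N(i,I)^2=O(a_N)$.

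The main obstacle is precisely the error summand: a direct $L^1$ bound yields only $O(\sqrt{a_N}/N)$, too large. The resolution combines two cancellations: the zero conditional expectation $\E_{\beta_0}[\sigma_I(1-\sigma_I\tanh(\beta_0 m_I))\mid\sigma_{-I}]=0$ that eliminates the $b_i\sigma_I$ piece, and the Lipschitz estimates $|b_i|=O(J_N(i,I))$, $|a_i-(\sigma_i-g'(m_i^{-I}))|=O(J_N(i,I))$ that force all surviving terms to be quadratic in $J_N$. The operator-norm inequality $u'J_Nu\le\|J_N\|\|u\|^2$ then converts the final cross term into the diagonal sum already controlled by \eqref{maincond} and condition (a), yielding $\E_{\beta_0}[L_\sigma(\beta_0)^2]=O(a_N/N^2)$ as required.
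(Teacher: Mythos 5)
Your proof is correct in outline and reaches the right bound, but it takes a genuinely different and more laborious route than the paper. Both start from the same exchangeable-pair identity $\E_{\beta_0}[L_\sigma^2]=\tfrac12\E_{\beta_0}[F(L_\sigma-L_{\sigma'})]$, and both use the observation that $\E_{\beta_0}\sum_i m_i\tanh(\beta_0 m_i)=2F_N'(\beta_0)=O(a_N)$ (via convexity and \eqref{maincond}; this is Lemma~\ref{lem0} in the paper). The divergence is in how the increment $L_\sigma-L_{\sigma^{(I)}}$ is decomposed. You Taylor-expand $r(x)=x\tanh(\beta_0 x)$ to \emph{first} order, leaving a derivative $r'(\xi_i)$ at an intermediate point, and you absorb the $\sigma_i$ coming from the $H_N$-increment into the error term; this produces a factor $\sigma_i-r'(\xi_i)$, which is $O(1)$ per coordinate and whose sum of squares is $O(N)$, not $O(a_N)$, so the raw Cauchy--Schwarz is indeed too weak. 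You then rescue it by two layers of conditional-expectation cancellation (first on $\sigma_I$, then on $\sigma_i$ against $X_i=\sigma_i-\tanh(\beta_0 m_i)$) plus Lipschitz estimates in $J_N(i,I)$, using condition~(b) to control the quadratic residue and the operator-norm inequality $u'J_Nu\le\|J_N\|\|u\|^2$ for the cross piece. The paper avoids this entirely: it Taylor-expands $r$ to \emph{second} order around $m_i(\sigma)$, so the first-order coefficient is $r'(m_i(\sigma))$ exactly (no intermediate point, no stray $\sigma_i$ folded in), and since $r'(m_i)^2\le M_2\,m_i\tanh(\beta_0 m_i)$, the cross term $T_2$ has $\E_{\beta_0}\sum_i r'(m_i)^2=O(a_N)$ and a single Cauchy--Schwarz with the operator norm finishes it; the second-order remainder $T_3$ is handled directly by $\sup|r''|<\infty$ and condition~(b). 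You also treat the leading diagonal term via a split at $|m_I|\le K$ using condition~(a) of Theorem~\ref{GEN}; the paper instead uses the global pointwise inequality $x^2\operatorname{sech}^2(\beta_0 x)\le M_1\,x\tanh(\beta_0 x)$, which removes the need for condition~(a) in this lemma altogether. Two minor cautions about your write-up: (i) the decomposition $\sigma_i-r'(\xi_i)=a_i+b_i\sigma_I$ requires $\xi_i$ to be chosen measurably; using the integral form of the mean value theorem avoids the issue and in fact makes $\sigma_i-\bar r'_i$ directly $\sigma_{-I}$-measurable, so the $b_i$-cancellation is vacuous and only the second cancellation (against $X_i$) is genuinely needed; (ii) your closing summary that the cancellations ``force all surviving terms to be quadratic in $J_N$'' overstates it --- the cross piece $u'J_Nu$ is linear in $J_N$ and is controlled by the operator-norm bound (the same tool the paper uses for $T_2$), not by quadratic decay. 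None of this is a gap, but the paper's grouping is cleaner and needs fewer moving parts.
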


%le5.3 #&#
%
\begin{lem}\label{derivative} Under the assumptions in Theorem~\ref
{GEN},
\[
\lim_{\varepsilon\rightarrow0}\lim_{K\rightarrow\infty}\lim
_{N\rightarrow\infty}\P_{\beta_0} \Biggl(\sum
_{i=1}^N m_i(\sigma)^2
\pmb1\bigl\{\bigl|m_i(\sigma)\bigr|\le K\bigr\}\le\varepsilon a_N
\Biggr)=0.
\]
\end{lem}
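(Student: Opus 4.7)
}
The plan is to exploit the identity
$$\sum_{i=1}^N m_i(\sigma)\tanh(\beta_0 m_i(\sigma))=H_N(\sigma)-N\,L_\sigma(\beta_0),$$
which is immediate from the definition \eqref{s} together with $L_\sigma(0)=H_N(\sigma)/N$, in combination with the elementary pointwise bound
$$0\;\le\; y\tanh(\beta_0 y)\;\le\;\beta_0 y^2 \qquad(y\in\R),$$
which follows from $|\tanh y|\le|y|$ together with the sign-matching $y\tanh(\beta_0 y)=|y|\tanh(\beta_0|y|)\ge 0$. These two ingredients reduce the lemma to showing that the left-hand side of the display is at least of order $a_N$ with high probability, and that the contribution from indices with $|m_i(\sigma)|>K$ is negligible.

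First, I would show that $\sum_i m_i(\sigma)\tanh(\beta_0 m_i(\sigma))$ is bounded below by a constant multiple of $a_N$ with high probability. By Lemma~\ref{lem1}(a), for any $\eta>0$ one can choose $\varepsilon>0$ small enough so that $H_N(\sigma)\ge \varepsilon a_N$ with $\P_{\beta_0}$-probability at least $1-\eta$ for all large $N$. On the other hand, Lemma~\ref{2moment} together with Chebyshev's inequality gives $|N L_\sigma(\beta_0)|=O_P(\sqrt{a_N})=o_P(a_N)$. Plugging these into the identity yields $\sum_i m_i(\sigma)\tanh(\beta_0 m_i(\sigma))\ge (\varepsilon/2) a_N$ with $\P_{\beta_0}$-probability approaching $1$.

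Next, I would discard the tail $\{|m_i(\sigma)|>K\}$. Since $|\tanh|\le 1$, the tail contribution is bounded in absolute value by $\sum_{i:\,|m_i(\sigma)|>K}|m_i(\sigma)|$, whose $\P_{\beta_0}$-expectation is $o(a_N)$ as $K\to\infty$ uniformly in large $N$ by condition (a) of Theorem~\ref{GEN}; Markov's inequality then makes this tail $o_P(a_N)$ in the iterated limit $N\to\infty$ followed by $K\to\infty$. So the truncated sum $\sum_{i:\,|m_i(\sigma)|\le K}m_i(\sigma)\tanh(\beta_0 m_i(\sigma))$ still exceeds $(\varepsilon/4) a_N$ with high probability. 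Applying the inequality $m_i\tanh(\beta_0 m_i)\le \beta_0 m_i^2$ term by term gives
$$\beta_0 \sum_{i=1}^N m_i(\sigma)^2\,\pmb 1\{|m_i(\sigma)|\le K\} \;\ge\; \sum_{i:\,|m_i(\sigma)|\le K}m_i(\sigma)\tanh(\beta_0 m_i(\sigma))\;\ge\;\frac{\varepsilon}{4}\,a_N,$$
which is the desired lower bound (after relabeling $\varepsilon/(4\beta_0)\to\varepsilon$ in the statement).

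The main subtlety is recognizing this particular representation. A naive Cauchy--Schwarz bound of the form $\sum_i m_i(\sigma)^2\,\pmb 1\{|m_i(\sigma)|\le K\}\gtrsim H_N(\sigma)^2/N$ would only yield a lower bound of order $a_N^2/N$, which is strictly weaker than the required $a_N$ whenever $a_N=o(N)$ (as in the sparse Erd\H os--R\'enyi regime of Corollary~\ref{erdosrenyirandom}). Routing the argument through $\sum_i m_i(\sigma)\tanh(\beta_0 m_i(\sigma))$ and using $\tanh(\beta_0 y)\le \beta_0 y$ avoids this Cauchy--Schwarz loss and produces the correct linear-in-$a_N$ lower bound directly.
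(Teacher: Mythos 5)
Your proposal is correct and follows essentially the same route as the paper's proof: the key identity $\sum_i m_i(\sigma)\tanh(\beta_0 m_i(\sigma))=H_N(\sigma)-NL_\sigma(\beta_0)$, the lower bound on $H_N(\sigma)$ from Lemma~\ref{lem1}(a), the Chebyshev bound on $L_\sigma(\beta_0)$ from Lemma~\ref{2moment}, the tail discard via condition~(a) of Theorem~\ref{GEN} with Markov's inequality, and the pointwise bounds $0\le y\tanh(\beta_0 y)\le\beta_0 y^2$ and $|\tanh|\le 1$ are exactly the ingredients the paper uses, combined in the same order. The only cosmetic difference is that the paper tracks explicit $\delta$-confidence levels on a good event $A_N(\delta)$ rather than using $o_P$ notation.
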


The above lemma replaces the application of Paley--Zygmund inequality of
\cite{Chatterjee}, Lemma~2.2, and will be used to complete the proof
of Theorem~\ref{GEN}.

%s5.2 #&#
\subsection{Completing the proof of Theorem \protect\ref{GEN}}
\label{pfgen}

By Chebyshev's inequality and Lemma~\ref{2moment} there exists
$C<\infty$ such that
%
%e5.1 #&#
%
\begin{eqnarray}
\P_{\beta_0}\bigl(\bigl|L_\sigma(\beta_0)\bigr|>K_1
\sqrt{a_N}/N\bigr)\le\frac
{N^2}{a_N}\E_{\beta_0}
L_\sigma(\beta_0)^2 \le\frac{C}{K_1^2}.
\label{chebyshev}
\end{eqnarray}
Now, fix $\delta>0$. Therefore, it is possible to choose
$K_1=K_1(\delta)$ such that the RHS above is less than $\delta$.

Also, by Lemma~\ref{derivative} there exists $\varepsilon
:=\varepsilon(\delta)>0$ and $K_2=K_2(\varepsilon,\delta)<\infty$
such that
%
%e5.2 #&#
%
\begin{eqnarray}
\P_{\beta_0} \Biggl(\sum_{i=1}^N
m_i(\sigma)^2\pmb1\bigl\{\bigl|m_i(\sigma)\bigr|\le
K_2\bigr\}\ge\varepsilon a_N \Biggr)\ge1-\delta,
\label{epsilon}
\end{eqnarray}
for $N$ large enough. Thus, taking $N$ large enough and setting
\[
T_N(\beta_0):= \Biggl\{\sigma\in S_N:
\bigl|L_\sigma(\beta_0)\bigr| \le K_1\frac{\sqrt{a_N}}{N},
\sum_{i=1}^N m_i(
\sigma)^2 \pmb1\bigl\{ \bigl|m_i(\sigma)\bigr|\le K_2
\bigr\}\ge\varepsilon a_N \Biggr\},
\]
we have $\P_{\beta_0}(T_N)\ge1-\delta$. For $\sigma\in T_N$,
%
%e5.3 #&#
%
\begin{eqnarray}\label{diffpl}
\bigl|L_\sigma'(\beta_0)\bigr|&=&\biggl\vert
\frac{\partial}{\partial\beta
}L_\sigma(\beta)\biggr\vert_{\beta=\beta_0}=
\frac{1}{N}\sum_{i=1}^N
m_i(\sigma)^2\operatorname{sech}^2\bigl(
\beta_0 m_i(\sigma)\bigr)
\nonumber
\\
&\ge& \frac{1}{N}\operatorname{sech}^2(\beta_0
K_2)\sum_{i=1}^N
m_i(\sigma)^2 \pmb1\bigl\{\bigl|m_i(\sigma)\bigr|
\le K_2\bigr\}
\\
&\ge& \varepsilon\frac{a_N}{N}\operatorname{sech}^2(
\beta_0 K_2). \nonumber
\end{eqnarray}
Therefore,
\begin{eqnarray*}
K_1\frac{\sqrt{a_N}}{N}&\ge&\bigl|L_{\sigma}(\beta_0)\bigr|=\bigl|L_{\sigma
}(
\beta_0)-L_{\sigma}\bigl(\hat\beta_N(\sigma)
\bigr)\bigr|
\nonumber
\\
&\ge&
\int_{\beta_0\wedge\hat\beta_N(\sigma)}^{\beta_0\vee\hat
\beta_N(\sigma)} L'_\sigma(
\beta)\,\mathrm{d}\beta
\\
&\ge& \frac{\varepsilon a_N}{K_2 N}\bigl|\tanh\bigl(K_2\hat\beta
_N(\sigma
)\bigr)-\tanh(K_2\beta_0)\bigr|.\nonumber
\end{eqnarray*}
Let $R=R(\delta):=\frac{K_2}{K_1\varepsilon}$. This implies that
%
%e5.4 #&#
%
\begin{equation}
\P_{\beta_0} \bigl(\sqrt{a_N}\bigl|\tanh(K_2\hat
\beta_N)-\tanh(K_2\beta_0)\bigr|\geq R \bigr)\leq
\delta, \label{consistent}
\end{equation}
and Theorem~\ref{GEN} follows.

%s5.3 #&#
\subsection{Proof of Corollary \protect\ref{sc}}
\label{pfcorchatterjee}

Note that for $\tau\in S_N$ and any $K>0$,
\[
\sum_{i=1}^N \bigl|m_i(\tau)\bigr|\pmb
1\bigl\{\bigl|m_i(\tau)\bigr|>K\bigr\}\leq\frac
{1}{K}\sum
_{i=1}^N m_i(\tau)^2 =
\frac{1}{K}\tau'J_N^2\tau\leq
\frac{N \llVert J_N\rrVert ^2}{K}.
\]
Therefore, condition (a) in Theorem~\ref{GEN} holds with $a_N=N$.

Moreover,
\[
\sum_{ i, j=1}^N J_N^2(i,
j)=\sum_{i=1}^N \llVert J_N
\vec e_i\rrVert_2^2\le\llVert
J_N\rrVert^2 \sum_{i=1}^N
\llVert\vec e_i\rrVert^2_2= N\llVert
J_N\rrVert^2,
\]
that is condition (b) in Theorem~\ref{GEN} holds with $a_N=N$.

Finally, to check \eqref{maincond} note that $F_N'(\beta)=\frac
{1}{2}\E_\beta\sigma'J_N\sigma\le\frac{M}{2}N$, where
$M:=\llVert J_N\rrVert <\infty$. Therefore,
\[
\lim_{\delta\rightarrow0}\liminf_{N\rightarrow\infty}
\frac
{1}{N}F_N(\beta_0-\delta)\ge\lim
_{\delta\rightarrow0}\liminf_{N\rightarrow\infty} \biggl(
\frac{1}{N}F_N(\beta_0)-\frac{M}{2} \delta
\biggr)>0,
\]
by condition (\ref{ptN}). Also, $\lim_{\delta\rightarrow0}\limsup
_{N\rightarrow\infty}\frac{1}{N}F_N(\beta_0+\delta)\le M\lim
_{\delta\rightarrow0} (\beta_0+\delta)<\infty$.
This verifies (\ref{maincond}) and by an application of Theorem~\ref
{GEN} the result follows.

%%%%%%%%%%%%%%%%

%s6 #&#
\section{Proof of Theorem \protect\ref{genfixed}}
\label{pfgenfixed}

In this section, we give the proof of Theorem~\ref{genfixed}, which
shows that consistent testing and estimation is impossible whenever the
partition function is $O(1)$. This is a consequence of a general result
(see Proposition~\ref{TESTING} below) which shows that distinguishing
two probability measures $\P_N$ versus $\Q_N$ is impossible whenever
the KL divergence between the two measures $\P_N$ and $\Q_N$ remains
asymptotically bounded.

%s6.1 #&#
\subsection{Non-existence of consistent tests}

For every $N\geq1$, let $(\sX_N, \cF_N)$ be a measure space and $\P
_N$ and $\Q_N$ two distributions on this measure space. Let $\mu_N$
be a dominating measure for both $\P_N$ and $\Q_N$, and $p_N$ and
$q_N$ denote the respective densities with respect to this measure.
Also, denote the Kullback--Leibler (KL) divergence between $\Q_N$ and
$\P_N$ by
%
%e6.1 #&#
%
\begin{eqnarray}
\label{kldivg} D(\Q_N\|\P_N)&:=&\E_{\Q_N}L_N(
\pmb X):=\E_{\Q_N}\log\frac
{q_N(\pmb X)}{p_N(\pmb X)}
\nonumber
\\[-8pt]
\\[-8pt]
\nonumber
&=&
\int_{\sX_N}q_N({\pmb x})\log\frac{q_N({\pmb x})}{p_N({\pmb
x})}
\,\mathrm{d}\mu_N.
\end{eqnarray}

Consider the problem of testing $\P_N$ versus $\Q_N$. A sequence of
tests $\phi_N$ is \textit{consistent} for this testing problem if there
exists a sequence of test functions $\{\phi_N\}_{N\geq1}$ such that
$\lim_{N\rightarrow\infty}\E_{\P_N}\phi_N=0$, and $\lim
_{N\rightarrow\infty}\E_{\Q_N}\phi_N=1$. %The testing problem is
%said to be \textit{inconsistent} if there exists no consistent sequence
%of test functions.

%pr6.1 #&#
%
\begin{ppn}\label{TESTING}
Consider the problem of testing $\P_N$ versus $\Q_N$. If
%
%e6.2 #&#
%
\begin{equation}
\limsup_{N\rightarrow\infty} D(\Q_N\|\P_N)<\infty,
\label{divlemma}
\end{equation}
then there does not exist a consistent sequence of tests for this
testing problem.
\end{ppn}

The proof of the proposition is given in Appendix \ref{pftesting}. In
the following, we use it to prove Theorem~\ref{genfixed}.

%s6.2 #&#
\subsection{Completing the proof of Theorem \protect\ref{genfixed}}
Given Proposition~\ref{TESTING}, it remains to verify that
%
%e6.3 #&#
%
\begin{equation}
\label{beta12KL} D(\P_{\beta_1}\|\P_{\beta_2})=F_N(
\beta_2)-F_N(\beta_1)-(\beta_2-
\beta_1)F_N'(\beta_1)< \infty,
\end{equation}
for $0\leq\beta_1< \beta_2\leq\beta_0$ (where $\beta_0$ satisfies
\eqref{maincondfixed}).

By hypothesis \eqref{maincondfixed} there exists $M<\infty$ such that
$F_N(\beta_1)< M$ and $F_N(\beta_2)< M$, for $N$ large enough.
Moreover, by the monotonicity of $F_N'(\cdot)$,
\[
(\beta_2-\beta_1)F_N'(
\beta_1)\leq
\int_{\beta_1}^{\beta_2} F_N'(\theta)
\,\mathrm{d}\theta=F_N(\beta_2)-F_N(
\beta_1)< M,
\]
proving \eqref{beta12KL}.

%s7 #&#
\section{Applications: Proofs of Corollary \protect\ref
{mplepositive}, \protect\ref{dregular} and \protect\ref{erdosrenyirandom}}
\label{pfapplication}

In this section, we prove Corollary~\ref{mplepositive} which will then
be used to derive rates of consistency of the MPLE for Ising models on
different graph ensembles, using Theorems \ref{GEN} and \ref
{genfixed}. To apply these results, we need to determine the correct
order of $F_N(\beta_0)$ in a neighborhood of a point $\beta_0>0$.
However, the exact asymptotics $F_N(\beta_0)$ is known only for
specific choices of the matrix $J_N$ and for specific values of $\beta_0$.

Nevertheless, the correct order of $F_N(\beta_0)$ can be easily
obtained in various examples, using, for instance, the following very
useful lemma, which is of independent interest and may find other applications.

%le7.1 #&#
%
\begin{lem}\label{bound} Consider the family of probability
distributions on $S_N$ given by (\ref{pmf}). Assume that the elements
of the matrix $J_N$ are non-negative, and $\lambda_1(J_N)\leq\lambda
_2(J_N)\leq\cdots\leq\lambda_N(J_N)$ are the eigenvalues of the
matrix $J_N$.
\begin{enumerate}[(a)]
\item[(a)] For $0<\beta< \frac{1}{ \llVert J_N\rrVert }$,
%
%e7.1 #&#
%
\begin{align}
\label{upper_bound} F_N(\beta)\le-\frac{1}{2}\sum
_{i=1}^n\log\bigl(1-\beta\lambda
_i(J_N) \bigr).
\end{align}

\item[(b)] For any $\beta> 0$,
%
%e7.2 #&#
%
\begin{align}
\label{lower_bound}F_N(\beta)\ge\sum_{1\le i< j\le
N}
\log\cosh\bigl(\beta J_N(i, j) \bigr).
\end{align}
\end{enumerate}
\end{lem}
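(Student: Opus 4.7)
My plan is to handle the two parts by separate elementary arguments, both of which crucially exploit the non-negativity of the entries of $J_N$.

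For part (a), the strategy is to compare the Rademacher average $\E_\sigma e^{\beta\sigma'J_N\sigma/2}$ to the Gaussian analog $\E_G e^{\beta G'J_N G/2}$ for $G \sim N(0, I_N)$. A direct diagonalization of $J_N$ together with the chi-squared moment generating function gives
\[
\E_G e^{\frac{\beta}{2} G'J_N G} = \det(I - \beta J_N)^{-1/2} = \prod_{i=1}^N (1 - \beta\lambda_i(J_N))^{-1/2},
\]
which is finite because $\beta < 1/\llVert J_N\rrVert$ ensures each $\beta\lambda_i(J_N) < 1$. Hence (a) reduces to proving $\E_\sigma e^{\frac{\beta}{2} \sigma' J_N \sigma} \le \E_G e^{\frac{\beta}{2} G' J_N G}$, which I would establish by expanding both exponentials in powers of $\beta$ and comparing term by term. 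For each $k$, writing $\E_\xi (\xi' J_N \xi)^k$ as a sum over tuples $(i_1, j_1, \ldots, i_k, j_k)$ and factoring the expectation by independence of coordinates, each summand becomes a product of non-negative factors $J_N(i_\alpha, j_\alpha)$ times $\prod_l \E \xi_l^{c_l}$, where $c_l$ counts occurrences of index $l$. Since $\E \sigma_l^{c_l} = \mathbf{1}\{c_l \text{ even}\}$ while $\E G_l^{c_l} = (c_l - 1)!! \cdot \mathbf{1}\{c_l \text{ even}\} \geq \E \sigma_l^{c_l}$, termwise comparison yields $\E_\sigma (\sigma'J_N\sigma)^k \leq \E_G (G'J_N G)^k$. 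Summing over $k$ with weights $(\beta/2)^k/k!$---the interchange of sum and expectation is justified on the Rademacher side by boundedness of the quadratic form, and on the Gaussian side by the absolute-convergence estimate $\sum_k (\beta\llVert J_N\rrVert/2)^k\E\llVert G\rrVert^{2k}/k! = (1 - \beta\llVert J_N\rrVert)^{-N/2} < \infty$---produces the desired inequality, and taking logarithms gives~(a).

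For part (b), I would use the exact identity $e^{t\sigma_i \sigma_j} = \cosh(t)(1 + \tanh(t)\sigma_i \sigma_j)$, which holds because $\sigma_i\sigma_j \in \{-1, +1\}$. Since $\sigma'J_N\sigma = 2\sum_{i<j} J_N(i,j)\sigma_i\sigma_j$, this produces the factorization
\[
e^{\frac{\beta}{2}\sigma'J_N\sigma} = \prod_{i<j} \cosh(\beta J_N(i,j)) \cdot \prod_{i<j}\big(1 + \tanh(\beta J_N(i,j))\sigma_i\sigma_j\big).
\]
Expanding the second product and taking $\E_\sigma$, the only surviving contributions are indexed by edge subsets $E \subseteq \binom{[N]}{2}$ in which every vertex has even degree (the ``even subgraphs''), each contributing the non-negative quantity $\prod_{(i,j) \in E} \tanh(\beta J_N(i,j))$. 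Since the empty subset alone contributes $1$, the entire average is at least $\prod_{i<j}\cosh(\beta J_N(i,j))$, and (b) follows after taking logarithms.

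The main conceptual hurdle is in part (a): the termwise Rademacher-vs-Gaussian moment comparison only lifts to a useful bound because $J_N(i,j) \geq 0$ makes each summand unambiguously non-negative; with mixed signs the natural comparison would only control absolute values, which does not upgrade to the signed inequality one needs. Once this observation is in place, the remaining steps---the Gaussian integral evaluation, the elementary inequality $(c-1)!! \geq 1$, and the Fubini-style interchange---are routine. Part (b) is by comparison almost immediate once the $\cosh$--$\tanh$ factorization is noticed.
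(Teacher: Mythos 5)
Your proposal is correct. Part (a) is essentially the paper's own argument: the paper also bounds $\E_0 e^{\frac{\beta}{2}\sigma'J_N\sigma}$ by $\E\, e^{\frac{\beta}{2}W'J_NW}$ for i.i.d.\ standard Gaussians via the coordinatewise moment inequality $\E_0\sigma_1^{b_1}\cdots\sigma_s^{b_s}\le \E W_1^{b_1}\cdots W_s^{b_s}$, applied term by term in the power-series expansion (where non-negativity of the entries of $J_N$ makes each coefficient non-negative), and then evaluates the Gaussian integral by diagonalizing $J_N$ and using the chi-squared MGF; you simply spell out the justification of the interchange of sum and expectation more explicitly. Part (b) is where you take a genuinely different route: the paper introduces auxiliary i.i.d.\ Rademacher variables $Y_{ij}$ and proves $\E\prod Y_{ij}^{b_{ij}}\le \E_0\prod(\sigma_i\sigma_j)^{b_{ij}}$, so that the same termwise comparison gives $\E_0 e^{\frac{\beta}{2}\sigma'J_N\sigma}\ge \E\prod_{i<j}e^{\beta J_N(i,j)Y_{ij}}=\prod_{i<j}\cosh(\beta J_N(i,j))$, whereas you use the exact factorization $e^{t\sigma_i\sigma_j}=\cosh(t)\bigl(1+\tanh(t)\sigma_i\sigma_j\bigr)$ and the even-subgraph (high-temperature) expansion, observing that all surviving terms are non-negative because $\tanh(\beta J_N(i,j))\ge 0$, so the empty subgraph alone gives the bound. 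Both arguments are valid and both use $J_N\ge 0$ at the decisive step (through non-negative power-series coefficients in the paper, through $\tanh\ge 0$ in yours); the paper's version has the aesthetic advantage of deriving (a) and (b) from one comparison principle, while yours is the classical Griffiths-style expansion and avoids introducing auxiliary randomness.
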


\begin{proof}
Let $W:=(W_1, W_2, \ldots,W_N)'$ be a vector of i.i.d. $N(0,1)$ random
variables. Note that for any $s\ge1$ and non-negative integers $b_1,
b_2, \ldots, b_s$
\[
\E_0 \sigma_1^{b_1}\sigma_2^{b_2}
\cdots\sigma_s^{b_s}\le\E W_1^{b_1}W_2^{b_2}
\cdots W_s^{b_s}.
\]
Since the matrix $J_N$ has non-negative entries, by expanding the
exponential function in power series every term can be bounded using
the above inequality. This implies that
%
%e7.3 #&#
%
\begin{align}
\label{eq:upper_bound} e^{F_N(\beta)}= \E_{0}e^{\frac{1}{2}\beta
\sigma'J_N \sigma}\le\E
e^{\frac{1}{2}\beta W'J_NW}.
\end{align}
The RHS of \eqref{eq:upper_bound} can be computed exactly as follows:
Let $J_N=\sum_{i=1}^N \lambda_i(J_N)p_i p_i'$, be the spectral
decomposition of $J_N$, where $p_1, p_2, \ldots, p_N$ are the
normalized eigenvectors of $J_N$. Then setting $p_i' W=Z_i$ for $1\leq
i \leq N$, we get
%
%e7.4 #&#
%
\begin{align}
\E e^{\frac{1}{2}\beta W 'J_N W}=\E e^{\frac{1}{2}\beta\sum
_{i=1}^N\lambda_i(J_N)(p_i' W)^2}=\E e^{\frac{1}{2}\beta\sum
_{i=1}^N\lambda_i(J_N)Z_i^2}. \label{expnorm}
\end{align}
Note that $Z:=(Z_1, Z_2, \ldots,F_N)$ is a vector of i.i.d. $N(0,1)$
random variables. Therefore, \eqref{eq:upper_bound} and (\ref
{expnorm}) implies
\[
\E e^{\frac{1}{2}\beta W 'J_N W} \le\prod_{i=1}^N
\bigl(1- \beta\lambda_i(J_N) \bigr)^{-\frac{1}{2}},
\]
using the MGF of the chi-squared distribution (since $\beta\lambda
_i(J_N)<1$, for all $1\leq i \leq N$). The inequality \eqref
{upper_bound} follows by taking $\log$ on both sides.

To prove (b), let $\{Y_{ij},1\le i<j\le N\}$ be i.i.d. with $\P
(Y_{ij}=\pm1)=\frac{1}{2}$. Then for any collection of non-negative
integers $((b_{ij}))_{1\leq i< j \leq N}$,
\[
\E\prod_{1\le i<j\le N} Y_{ij}^{b_{ij}}\le
\E_0\prod_{1\le i<j\le
N}(\sigma_i
\sigma_j)^{b_{ij}}.
\]
Indeed, this follows on noting that both the LHS and RHS are $\{0,1\}
$-valued, and the LHS is $1$ if and only if $b_{ij}$ is even for all
$(i,j)$, which is when the RHS is $1$ as well. This implies,
\begin{align*}
e^{F_N(\beta)}={}&\E_{0}\prod_{1\le i< j\le N}e^{\beta J_N(i,j)\sigma
_i\sigma_j }
\ge \E\prod_{1\le i< j\le N}e^{\beta J_N(i, j)Y_{ij} }
\nonumber
\\
={}&\prod_{1\le i< j\le N}\cosh\bigl(\beta J_N(i,
j) \bigr).
\end{align*}
The inequality \eqref{lower_bound} follows on taking $\log$ on both sides.
\end{proof}

%re7.1 #&#
%
\begin{remark}
Note that the upper bound \eqref{upper_bound} is obtained by replacing
the spin configuration $\sigma=(\sigma_1, \sigma_2, \ldots, \sigma
_N)$ with a vector of i.i.d. $N(0, 1)$ random variables. To get the
lower bound, the collection $\{\sigma_i\sigma_j\}_{1\leq i< j \leq
N}$ is replaced by i.i.d. Rademacher random variables. Surprisingly,
the bounds obtained by these simple comparison techniques often give
the correct asymptotic order of $F_N(\beta)$ in the high temperature
regime $\beta<\frac{1}{\llVert J_N\rrVert }$. To get the order of
$F_N(\beta)$
beyond the phase transition, the standard mean-field approximation can
be used (see Section~\ref{bound} for details).
\end{remark}

%s7.1 #&#
\subsection{Proof of Corollary \protect\ref{mplepositive}}

For all $\beta>0$, by the bound (\ref{lower_bound}) in Lemma~\ref
{bound}, we get
%
%e7.5 #&#
%
\begin{eqnarray}
F_N(\beta) \geq\sum_{1\le i<j\le N} \log\cosh
\bigl(\beta J_N(i, j) \bigr)& \geq& C_1\beta^2
\sum_{1\leq i < j \leq N}J_N^2(i, j),
\label{lbpt}
\end{eqnarray}
where $C_1:=\inf_{|x|\le1}\frac{\log\cosh x}{x^2}>0$. To get the
upper bound, %set $r:=\beta_0 \left\Vert J_N\right\Vert=\beta_0$.
%Since $r<1$, there
%exists a constant $C:=C(r)$ such that $-\log(1-\beta_0\lambda_i(J_N))
%\leq\beta_0\lambda_i(J_N)+C\beta_0^2\lambda_i(J_N)^2$, for $1\leq i
%\leq N$. Using this and
we use (\ref{upper_bound}) for $\beta<\frac{1}{\lambda}$
%
%e7.6 #&#
%
\begin{eqnarray}\label{ubpt}
F_N(\beta) &\leq&-\frac{1}{2} \sum_{i=1}^N
\log\bigl(1-\beta\lambda_i(J_N) \bigr)\leq
\frac{C_2\beta^2 }{2} \sum_{i=1}^N
\lambda_i\bigl(J_N^2\bigr)
\nonumber
\\
&\leq& \frac{C_2\beta^2 }{2} \tr\bigl(J_N^2\bigr)
\\
&=& \frac{C_2\beta^2}{2} \sum_{i,j=1}^NJ_N(i,
j)^2, \nonumber
\end{eqnarray}
where $C_2=C_2(\beta):=\sup_{|x|\le\beta}\frac{-\log
(1-x)-x}{x^2}<\infty$ for any $\beta<\frac{1}{\lambda}$, and we use
the fact that $\sum_{i=1}^N \lambda_i(J_N)=0$. The bounds (\ref
{lbpt}) and (\ref{ubpt}) together implies (\ref{maincond}) with
$a_N=\sum_{i,j=1}^NJ_N(i, j)^2$ for $\beta=\beta_0<\frac{1}{\lambda
}$. Therefore, if $\sum_{i,j=1}^NJ_N(i, j)^2\rightarrow\infty$, part
(a) follows by Theorem~\ref{GEN}.

Finally, if $\limsup_{N\rightarrow\infty}\sum_{i,j=1}^NJ_N(i,
j)^2<\infty$, then $F_N(\beta)=O(1)$ for $\beta<\frac{1}{\lambda}$
and by Theorem~\ref{genfixed} part (b) follows.

%\begin{remark} A direct consequence of Corollary~\ref{mplepositive} is
%that the MPLE $\{\hat{\beta}_N\}_{N\geq1}$ is $\sqrt N$-consistent
%for bounded degree graphs. More formally, let $G_N=([N], E(G_N))$ be a
%graph on $[N]:=\{1, 2, \ldots, N\}$ with maximum degree $\Delta$ such
%that $|E(G_N)|\geq C N$ for some constant $C$. Consider the Ising
%model (\ref{pmf}) with $J_N(i, j)=\frac{1}{\Delta}$ for $(i, j)\in
%E(G_N)$ and zero otherwise. Since $|m_i(\tau)|\leq1$, for all $\tau
%\in S_N$, by taking $K=1$, $\frac{1}{\sqrt N}\sum_{i=1}^N |m_i(
%\sigma)| \pmb1\{|m_i(\sigma)|> K\}=0,$ which implies condition (a) of
%Theorem~\ref{GEN}. Moreover, there exists constants $C_1, C_2>0$ such
%that $C_1\sqrt N\leqJ_N\left\Vert_F \leq C_2\sqrt N$, proving that
%the MPLE
%is $\sqrt N$-consistent for bounded degree graphs. This shows that
%MPLE is $\sqrt N$-consistent for lattice graphs reconstructing
%classical results (see \cite{guyon} and the references therein).
%\end{remark}

%s7.2 #&#
\subsection{Proof of Example \protect\ref{ptblock}}
\label{ptblockpf}

It is well known that in the Curie--Weiss model for $\beta>1$ (see
\cite{graph_limits_II}, Example~3.9)
%
%e7.7 #&#
%
\begin{align}
\label{complete_mean_field} \lim_{N\rightarrow\infty}\frac{1}{N}\E
_0e^{\frac{\beta}{2 N}\sum
_{1\le i\ne j\le N}\sigma_i\sigma_j}:=F(
\beta)\in(0, \infty).
\end{align}

Note that
\begin{align*}
\sigma'J_N\sigma={}&\frac{1}{N}\sum
_{1\le i\ne j\le
\frac{N}{2}}\sigma_i\sigma_j+
\frac{1}{\sqrt{N}}\sum_{\frac
{N}{2}<i\ne j\le\frac{N}{2}+\sqrt{N}}\sigma_i
\sigma_j
\\
={}&{\sigma^{(1)}}'A_{N}\sigma^{(1)}+{
\sigma^{(2)}}'B_{N}\sigma^{(2)},
\end{align*}
where $A_N$ is a $N/2\times N/2$ matrix with $A_N(i, j)=1/N$, for $i
\ne j$, and $B_N$ is a $\sqrt N\times\sqrt N$ matrix with $B_N(i,
j)=1/\sqrt N$, for $i \ne j$, and $\sigma=(\sigma^{(1)},\sigma
^{(2)})$. Therefore,
%
%e7.8 #&#
%
\begin{align}
\label{eg:bound1} e^{F_N(\beta)}=\E_0 e^{\frac{\beta}{2} \sigma
'J_N\sigma}=\E
_0e^{\frac{\beta}{2} {\sigma^{(1)}}'A_{N}\sigma^{(1)}}\E_0e^{\frac
{\beta}{2}{\sigma^{(2)}}'B_{N}\sigma^{(2)}}.
\end{align}

Note that $
J_N\Vert =1$ and by \eqref{upper_bound} $F_N(\beta)=O(1)$
for $\beta<1$. Thus, there exists no sequence of consistent estimators
for $\beta\in(0, 1)$ by Theorem~\ref{genfixed}.

For $1<\beta<2$, by \eqref{complete_mean_field}
\begin{align*}
0<\liminf_{N\rightarrow\infty}\frac{1}{\sqrt{N}}F_N(\beta)\le
\limsup_{N\rightarrow\infty}\frac{1}{\sqrt{N}}F_N(\beta)<\infty,
\end{align*}
since $\sigma^{(2)'}B_{N}\sigma^{(2)}$ is the Hamiltonian of a
Curie--Weiss model on size $\sqrt N$.
Moreover, $|m_i(\tau)|\leq1$, for all $1\leq i\leq N$ and $\tau\in
S_N$; so taking $K=1$, $\frac{1}{\sqrt N}\sum_{i=1}^N |m_i(\sigma)|
\pmb1\{|m_i(\sigma)|> K\}=0$, establishing condition (a) of
Theorem~\ref{GEN}. Therefore, the MPLE $\{\hat{\beta}_N\}_{N\geq1}$ is
$N^{1/4}$-consistent for $\beta\in(1,2)$ by Theorem~\ref{GEN}.

Similarly, for $\beta>2$
\begin{align*}
0<\liminf_{N\rightarrow\infty}\frac{1}{N}F_N(\beta)\le
\limsup_{N\rightarrow\infty}\frac{1}{N}F_N(\beta)<\infty,
\end{align*}
and so the MPLE $\{\hat{\beta}_N\}_{N\geq1}$ is $\sqrt N$-consistent.

%s7.3 #&#
\subsection{Proof of Corollary \protect\ref{dregular}}

Note that when the sufficient statistic is of the form (\ref{adjm}),
$|m_i(\tau)|\leq1$, for all $\tau\in S_N$. Therefore, taking $K=1$,
$\frac{d_N}{N}\sum_{i=1}^N |m_i(\sigma)| \pmb1\{|m_i(\sigma)|> K\}
=0$, which implies condition (a) of Theorem~\ref{GEN}. Moreover, in
this case, $\llVert J_N\rrVert =1$, and $\sum_{i, j=1}^N J_N(i, j)^2=N/d_N$.
Therefore, part (a) follows by Corollary~\ref{mplepositive}.

By Corollary~\ref{sc}, to show part (b) it suffices to verify that
condition (\ref{ptN}) holds for all $\beta_0>1$. This is done using
the mean field approximation of Lemma~\ref{meanfield}. By plugging in
the vector $(m, m, \ldots, m)'$ for the vector
$\mathbf{z}$ in the RHS of \eqref{mfieldb}
%
%e7.9 #&#
%
\begin{equation}
F_N(\beta_0)\ge N\sup_{m\in[-1,1]} \biggl\{
\frac{\beta_0
m^2}{2}-I(m) \biggr\}, \label{znbeta0lb}
\end{equation}
where $I(x):=\frac{1}{2}(1+x)\log(1+x)+\frac{1}{2}(1-x)\log(1-x)$
for $x\in[-1,1]$.
Thus, it suffices to show that $\sup_{m\in[-1,1]}g(m)>0$, where
$g(m):=\frac{\beta_0 m^2}{2}-I(m)$. To this end, note that
$g''(0)=\beta_0-1>0$, that is, $m=0$ is not a local maximum of $g$.
This implies the RHS of (\ref{znbeta0lb}) is positive, thus verifying
condition (\ref{ptN}).

%s7.4 #&#
\subsection{Proof of Corollary \protect\ref{erdosrenyirandom}}

Let $d_i$ be the degree of the vertex $i$ in $G_N$, for $1\leq i \leq
N$. Then $|m_i(\tau)|\leq\frac{d_i}{N p(N)}$, for all $\tau\in
S_N$. In the regime $\frac{\log N}{N}\ll p(N) \leq1$, the maximum
degree $\Delta=\max_{i \in V(G_N)}d_i=Np(N)(1+o(1))$ with high
probability \cite{ks}. Therefore, $|m_i(\tau)|\leq1+o(1)$ for all
$1\leq i \leq N$, and by taking $K\geq2$ it follows that $p(N)\sum
_{i=1}^N |m_i(\sigma)| \pmb1\{|m_i(\sigma)|> K\}=0$, with high
probability. This implies condition (a) of Theorem~\ref{GEN}.
%In this case $p(N)\sum_{i,j=1}^N J_N(i, j)^2=\frac{2}{N^2p(N)}|E(G_N)|
%\pto1$. This verifies condition (b) in Theorem~\ref{GEN}.
%Also, as in (\ref{lbpt})
%\begin{equation}
%p(N) F_N(\beta) \geq C_1\beta^2\cdot p(N) \sum_{1\leq i < j \leq
%N}J_N^2(i, j)\pto\frac{C_1\beta^2}{2}.
%\label{erlb}
%\end{equation}

Moreover, for $\frac{\log N}{N}\ll p(N) \leq1$, $\llVert J_N\rrVert
=1+o(1)$ with
high probability \cite{ks}, and
\[
p(N)\sum_{i,j=1}^N J_N(i,
j)^2=\frac{2}{N^2p(N)}\bigl|E(G_N)\bigr|\pto1,
\]
and part (a) follows from Corollary~\ref{mplepositive}.

%Therefore, as in (\ref{ubpt}), for $\beta<1$
%\begin{eqnarray}
%p(N)F_N(\beta)& \leq& C_2(\beta) \beta^2 \cdot p(N) \sum_{1\leq i \ne
%j \leq N}J_N(i, j)^2 \pto C_2(\beta) \beta^2.
%\label{erub}
%\end{eqnarray}
%The bounds (\ref{erlb}) and (\ref{erub}) imply (\ref{maincond}) with
%$a_N=1/p_N$. Therefore, if $p(N)\ll1$, then by Theorem~\ref{GEN},
%part (a) follows. For $p_N=O(1)$, $F_N(\beta)=O(1)$ for $\beta<1$ (by (
%\ref{erub})), and by Theorem~\ref{genfixed} $\hat\beta_N$ is
%inconsistent.
%

To prove part (b), we use the mean field approximation as in
Corollary~\ref{dregular}. By plugging in the vector $(m, m, \ldots,
m)'$ for
the vector
$\mathbf{z}$ in the RHS of \eqref{mfieldb}, we get
\[
F_N(\beta_0)\ge N\sup_{m\in[-1,1]} \biggl\{
\frac{\beta_0
m^2|E(G_N)|}{N^2p(N)}-I(m) \biggr\}.
\]
Condition \eqref{ptN} follows by arguments similar to those in
Corollary~\ref{dregular} and the fact $\frac
{2|E(G_N)|}{N^2p(N)}\stackrel{\sP}{\rightarrow}1$.

%Combining (\ref{upper}) and (\ref{lower}) it follows that $0<
%\liminf_{N\rightarrow\infty} \frac{1}{N}F_N(\beta_0)\leq\limsup_{N
%\rightarrow\infty} \frac{1}{N} F_N(\beta_0) <\infty$, verifying
%condition (\ref{ptN}).

%s8 #&#
\section{Proofs of Theorems \protect\ref{dense_testing} and~\protect\ref{dense}}
\label{pfdense}

In this section, we show the existence of a untestable/testable
threshold in Ising models on converging sequence of dense graphs, and
compute the distribution and asymptotic power of the most powerful
test, before the phase transition.

%s8.1 #&#
\subsection{Proof of Theorem \protect\ref{dense_testing}}
\label{threshold}

If $G_N$ converges to $W$, then $\frac{1}{N}\llVert A(G_N)\rrVert $
converges to
the operator norm of $\llVert W\rrVert $ (see (\ref{eq:T})). Moreover,
\[
\frac{1}{N^2}\sum_{i=1}^N
\lambda_i\bigl(A(G_N)^2\bigr)\rightarrow
t(C_2, W),
\]
and part (a) follows by Corollary~\ref{mplepositive}.

%To show (a) it suffices to verify $\limsup_{N\rightarrow\infty} F_N(
%\beta)< \infty$, for $\beta<\frac{1}{\left\Vert W\right\Vert}$ (by
%Proposition~\ref{TESTING}). To this end, by (\ref{upper_bound})
%\begin{eqnarray}
%F_N(\beta)\leq-\frac{1}{2} \sum_{i=1}^N \log\left(1-\beta
%\lambda_i(W_N)\right)&\leq&
%C _2(\beta)\beta^2 \cdot\frac{1}{N^2}\sum_{i=1}^N\lambda_i(A(G_N)^2)
%\nonumber\\
%&\leq& C_2(\beta) \beta^2 \cdot t(C_2, W_N)\nonumber\\
%&\rightarrow& C_2(\beta) \beta^2 \cdot t(C_2, W),
%\label{denseupper}
%\end{eqnarray}
%where the last step uses (\ref{lambda2}).

%To prove (b) it suffices to show the stronger conclusion that the MPLE
%$\{\hat\beta_N\}_{N\geq1}$ is $\sqrt N$ consistent for $\beta_1>
%1/\left\Vert W\left\Vert$.
We now show (b). From \cite{graph_limits_II}, Theoem 2.14, when $G_N$ converges to $W$,
then $\lim_{N\rightarrow\infty}\frac{1}{N}F_N(\beta)=\sE(W, \beta
)$, where
%
%e8.1 #&#
%
\begin{equation}
\label{denseopt} \sE(W, \beta):=\sup_{m:[0,1]\mapsto[-1,1]} \biggl
\{
\frac{\beta
}{2}
\int_{[0,1]^2}m(x)m(y)W(x,y)\,\mathrm{d} x\,\mathrm{d} y-
\int_0^1I\bigl(m(x)\bigr)\,\mathrm{d} x \biggr\},
\end{equation}
and $I(x)=\frac{1}{2} \{(1+x)\log(1+x)+(1-x)\log(1-x) \}$
as in Corollary~\ref{dregular}.
By Corollary~\ref{sc}, it enough to show that $\sE(W, \beta)>0$,
for $\beta> \frac{1}{\llVert W\rrVert }$.

To this end, let $v_1(x)$ to be the eigenvector corresponding to the
eigenvalue $\lambda=\llVert W\rrVert $. Then $\vert\lambda
v_1(x)\vert=\vert\int_0^1 W(x,y)v_1(y)\,\mathrm{d} y\vert\le1$,
and $\sup_{x\in
[0,1]}|v_1(x)|<\infty$. Thus, there exists $\delta>0$ such that for
$z\in(-\delta,\delta)$ we have $\sup_{x\in[0,1]}|zv_1(x)|\le1$, and
\begin{align*}
\sE(W, \beta) \ge{}& \sup_{|z|<\delta} \biggl\{\frac{\beta}{2}
z^2
\int_{[0, 1]^2}v_1(x)v_1(y)W(x,y)\,\mathrm{d} x
\,\mathrm{d} y-
\int_0^1I\bigl(z v_1(x)\bigr)
\,\mathrm{d} x \biggr\}
\\
={}&\sup_{|z|<\delta} \biggl\{\frac{\beta z^2\lambda}{2}
\int_0^1 v_1(x)^2\,\mathrm{d}x-
\int_0^1I\bigl(z v_1(x)\bigr)\,\mathrm{d} x \biggr\}.
\end{align*}

Setting $h(z):=\frac{\beta z^2\lambda}{2}\int_0^1 v_1(x)^2\,\mathrm{d}
x-\int_0^1I(z v_1(x))\,\mathrm{d} x$ it suffices to show that $z=0$ is
not a point of local maxima of the function $h$. This follows on noting
that $h''(0)=(\beta\lambda-1) \int_0^1 v_1(x)^2 \,\mathrm{d} x>0$.

%s8.2 #&#
\subsection{Proof of Theorem \protect\ref{dense}}

By Lemma~\ref{lem:exists} (see Appendix \ref{pfpartition}), the
limiting distribution (\ref{limitdense}) is well defined.

The following proposition (proved in Appendix \ref{pfpartition}) gives
the limit of the log-partition function, for a converging sequence of
dense graphs, for $\beta< \frac{1}{\llVert W\rrVert }$.

%pr8.1 #&#
%
\begin{ppn}\label{PARTITION}
Let $\{G_N\}_{N\geq1}$ be a sequence of simple graphs converging in cut-metric to $W \in\sW$, such
that $\int_{[0, 1]^2}W(x, y)^2\,\mathrm{d} x\,\mathrm{d} y>0$. Then
for any
$0< \beta<\frac{1}{\llVert W\rrVert }$
%
%e8.2 #&#
%
\begin{equation}
\label{znlimit} \lim_{N\rightarrow\infty}F_N(\beta)=-
\frac{1}{2} \sum_{i=1}^\infty\bigl\{
\log\bigl(1- \beta\lambda_i(W) \bigr)- \beta\lambda_i(W)
\bigr\}.
\end{equation}
\end{ppn}

The above proposition can be used to complete the proof of Theorem~\ref
{dense_testing} as follows: Fix $\delta>0$ such that $\beta+\delta
<\frac{1}{\llVert W\rrVert }$. Then for any $t\in(-\beta,\delta)$,
%
%e8.3 #&#
%
\begin{eqnarray}
\E_{\beta}\exp\biggl\{\frac{t}{2}\cdot\frac{1}{N}\sigma
'W_N\sigma\biggr\}&=& \exp\bigl\{F_N(
\beta+t)-F_N(\beta)\bigr\}
\nonumber
\\[-8pt]
\\[-8pt]
\nonumber
&\rightarrow& \prod_{i=1}^\infty
\frac{e^{-\frac{1}{2}t\lambda_i(W)
}}{\sqrt{1-\frac{t\lambda_i(W)}{1-\beta\lambda_i(W)}}},
\end{eqnarray}
by Proposition~\ref{PARTITION}.

By Lemma~\ref{lem:exists} the RHS above is the MGF of the random
variable $\frac{Q_{\beta, W}}{2}$ defined in (\ref{limitdense}).

\begin{appendix}

%sA #&#
\section{Proofs of technical lemmas}
\label{pfgenlemmas}

In the appendix we prove the lemmas used in the proof of Theorem~\ref
{GEN}. The rest of the section is organized as follows: Appendix \ref
{pflem1} contains the proof of Lemma~\ref{lem1}. The proofs of Lemmas
\ref{2moment} and \ref{derivative} are given in Appendices~\ref
{pflem2moment} and~\ref{pflemderivative}, respectively.

%sA.1 #&#
\subsection{Proof of Lemma \protect\ref{lem1}}
\label{pflem1}

By \eqref{maincond} there exists $\delta\in(0,\beta_0/2)$ such that
$\liminf_{N\rightarrow\infty}\frac{1}{a_N}F_N(\beta_0-\delta)>0$.
By the monotonicity of $F_N'(\cdot)$,
\[
F_N(\beta_0-\delta)=
\int_0^{\beta_0-\delta}F_N'(t)\,\mathrm{d}t\le(\beta_0-\delta)F_N'(
\beta_0-\delta)\le\beta_0F_N'(
\beta_0-\delta),
\]
it follows that $\liminf_{N\rightarrow\infty}\frac
{1}{a_N}F_N'(\beta_0-\delta)>0$.
Thus, for any $\varepsilon>0$
\begin{align*}
&\P_{\beta_0}\bigl(H_N(\sigma)<\varepsilon a_N
\bigr)=\P_{\beta_0}\bigl(e^{-\frac
{1}{2}\delta H_N(\sigma)}>e^{-\frac{1}{2}\delta\varepsilon a_N}\bigr
) \le
e^{\frac{1}{2}\delta\varepsilon a_N+F_N(\beta_0-\delta)-F_N(\beta_0)},
\end{align*}
which, on taking logarithms, implies that
\begin{align*}
\log\P_{\beta_0}\bigl(H_N(\sigma)<\varepsilon a_N
\bigr)\le\frac{\varepsilon
\delta a_N}{2}-
\int_{\beta_0-\delta}^{\beta_0}F_N'(t)
\,\mathrm{d}t\le\frac{\varepsilon\delta a_N}{2}-F_N'(
\beta_0-\delta)\delta.
\end{align*}
Dividing both sides by $a_N$ and taking limits as $N\rightarrow\infty
$ followed by $\varepsilon\rightarrow0$ we have
\[
\lim_{\varepsilon\rightarrow0}\limsup_{N\rightarrow\infty}\frac
{1}{a_N}
\log\P_{\beta_0}\bigl(H_N(\sigma)<\varepsilon a_N
\bigr)\le-\liminf_{N\rightarrow\infty}\frac{1}{a_N}{F_N'(
\beta_0-\delta)}<0,
\]
thus completing the proof of (a).

To show (b), again invoking \eqref{maincond} there exists $\delta>0$
such that
$\limsup_{N\rightarrow\infty}F_N(\beta_0+2\delta)<\infty$. Since
\[
F_N(\beta_0+2\delta)=
\int_0^{\beta_0+2\delta}F_N'(t)\,\mathrm{d}t\ge\delta F_N'(\beta
_0+\delta),
\]
it follows that $\limsup_{N\rightarrow\infty}\frac
{1}{a_N}F_N'(\beta_0+\delta)<\infty$. Thus, for any $K<\infty$
\begin{align*}
&\P\bigl(H_N(\sigma)>K a_N\bigr)=\P\bigl(e^{\frac{1}{2}\delta
H_N(\sigma
)}>e^{\frac{1}{2}\delta Ka_N}
\bigr)\le e^{-\frac{1}{2}\delta K
a_N+F_N(\beta_0+\delta)-F_N(\beta_0)}.
\end{align*}
Taking logarithm on both sides,
\begin{align*}
\log\P\bigl(H_N(\sigma)>K a_N\bigr)& \le-
\frac{\delta K a_N}{2}+
\int_{\beta
_0}^{\beta_0+\delta}F_N'(t)
\,\mathrm{d}t
\nonumber
\\
& \le-\frac{\delta Ka_N}{2}+F_N'(\beta_0+
\delta),
\end{align*}
from which dividing by $a_N$ and taking limits as $N\rightarrow\infty
$ followed by $K\rightarrow\infty$ gives
\[
\lim_{K\rightarrow\infty}\limsup_{N\rightarrow\infty}\frac
{1}{a_N}\log
\P\bigl(H_N(\sigma)>K a_N\bigr)=-\infty,
\]
thus proving part (b).

%sA.2 #&#
\subsection{Proof of Lemma \protect\ref{2moment}}
\label{pflem2moment}

We begin with a technical estimate which will be needed to bound the
second moment of $L_\sigma(\beta_0)$.

%leA.1 #&#
%
\begin{lem}\label{lem0}
Under assumption \eqref{maincond} and $m_i(\sigma)$ as defined in
(\ref{mi}),
\[
\limsup_{N\rightarrow\infty}\frac{1}{a_N}\E_{\beta_0}\sum
_{i=1}^Nm_i(\sigma)\tanh\bigl(
\beta_0m_i(\sigma)\bigr)<\infty.
\]
\end{lem}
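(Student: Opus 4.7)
The plan is to recognise that $\tanh(\beta_0 m_i(\sigma))$ is precisely the conditional expectation of $\sigma_i$ given the other spins under $\P_{\beta_0}$. This identification turns the sum of interest into the expected Hamiltonian, which equals $2F_N'(\beta_0)$, and convexity of $F_N$ together with assumption \eqref{maincond} then delivers the desired upper bound.

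The first step is the conditional law of $\sigma_i$. Since $J_N$ is symmetric with zero diagonal, the terms in $\tfrac{\beta_0}{2}H_N(\sigma)$ involving $\sigma_i$ collect to $\beta_0 \sigma_i m_i(\sigma)$, and $m_i(\sigma)$ depends only on $\sigma_{-i}$. Hence
\[
\E_{\beta_0}[\sigma_i \mid \sigma_{-i}] = \tanh\bigl(\beta_0 m_i(\sigma)\bigr),
\]
and by the tower property
\[
\E_{\beta_0}\bigl[\sigma_i m_i(\sigma)\bigr] = \E_{\beta_0}\bigl[m_i(\sigma)\tanh\bigl(\beta_0 m_i(\sigma)\bigr)\bigr].
\]
Summing over $i$ and using the identities $\sum_i \sigma_i m_i(\sigma) = H_N(\sigma)$ and $F_N'(\beta) = \tfrac{1}{2}\E_\beta H_N(\sigma)$ yields the key equality
\[
\sum_{i=1}^N \E_{\beta_0}\bigl[m_i(\sigma)\tanh\bigl(\beta_0 m_i(\sigma)\bigr)\bigr] = 2 F_N'(\beta_0).
\]

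It remains to control $F_N'(\beta_0)/a_N$ from above. Since $F_N$ is convex (as a log-MGF) and $F_N(\beta_0) \geq 0$ (by Jensen applied to $\P_0$, using $\E_0 H_N(\sigma) = 0$), the supporting-line inequality at $\beta_0$ gives, for every $\delta > 0$,
\[
F_N'(\beta_0) \leq \frac{F_N(\beta_0+\delta) - F_N(\beta_0)}{\delta} \leq \frac{F_N(\beta_0+\delta)}{\delta}.
\]
Choosing $\delta$ to be the value provided by \eqref{maincond}, the right-hand side is $O(a_N)$, which is exactly what is required. The argument is largely bookkeeping; the only place to exercise care is the factor of $2$ relating the local conditional field to $m_i$, which comes from the symmetric double counting in $H_N$, and the non-negativity $F_N(\beta_0) \geq 0$, which allows one to discard the $-F_N(\beta_0)$ term in the convexity bound.
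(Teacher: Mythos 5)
Your proposal is correct and follows essentially the same route as the paper: identify $\tanh(\beta_0 m_i(\sigma))$ with $\E_{\beta_0}[\sigma_i\mid\sigma_{-i}]$, sum to get $2F_N'(\beta_0)$, and then bound $F_N'(\beta_0)$ by $F_N(\beta_0+\delta)/\delta$ using convexity (the paper phrases this as $\int_0^{\beta_0+\delta}F_N'(t)\,\mathrm dt\ge\delta F_N'(\beta_0)$, which is the same inequality).
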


\begin{proof}
By \eqref{maincond} there exists $\delta>0$ such that
$\limsup_{N\rightarrow\infty}\frac{1}{a_N}F_N(\beta_0+\delta
)<\infty$. Therefore, $F_N(\beta_0+\delta)=\int_0^{\beta_0+\delta
}F_N'(t)\,\mathrm{d}t\ge\delta F_N'(\beta_0)$,
and so %$\limsup_{N\rightarrow\infty}\frac{1}{a_N}F_N'(\beta_0+)<
%\infty$. By monotonicity,
%
%eA.1 #&#
%
\begin{equation}
\frac{1}{a_N}\limsup_{N\rightarrow\infty}Z'_N(
\beta_0)<\infty. \label{derivative_finite}
\end{equation}

Now, observe that $m_i(\sigma)$ does not depend on $\sigma_i$, and
$\E_{\beta_0}(\sigma_i|(\sigma_j)_{j\ne i})=\tanh(\beta
_0m_i(\sigma))$. Since
\begin{align*}
2F_N'(\beta_0)={}&\E_{\beta_0}
H_N(\sigma)=\E_{\beta_0} \Biggl(\sum
_{i=1}^N \sigma_i m_i(
\sigma) \Biggr)
\nonumber
\\
={}&\E_{\beta_0} \Biggl(\sum_{i=1}^N
m_i(\sigma)\tanh\bigl(\beta_0 m_i(\sigma)
\bigr) \Biggr),
\end{align*}
the result follows from \eqref{derivative_finite}.
\end{proof}

The above lemma will be used to complete the proof of Lemma~\ref
{2moment}. To this end, for $1\leq j \leq N$ and $\tau\in S_N$, let
\[
\tau^{(j)}:=(\tau_1, \ldots, \tau_{j-1}, -
\tau_j, \tau_{j+1}, \ldots, \tau_N)
\]
and
%
%eA.2 #&#
%
\begin{equation}
p_j(\tau)=\frac{e^{-\beta_0 \tau_j m_j(\tau)}}{e^{\beta_0 \tau_j
m_j(\tau)}+e^{-\beta_0 \tau_j m_j(\tau)}}. \label{probj}
\end{equation}

From equation (10) of Chatterjee \cite{Chatterjee} it follows that
%
%eA.3 #&#
%
\begin{equation}
\E_\beta\bigl(L_\sigma(\beta_0)^2
\bigr)=\frac{1}{N}\E_\beta\sum_{j=1}^N
\bigl(L_{\sigma}(\beta_0)-L_{\sigma^{(j)}}(
\beta_0) \bigr)m_j(\sigma)\sigma_jp_j(
\sigma).
\end{equation}
Setting $r(x):=x\tanh(\beta_0x)$, note that
\[
L_\sigma(\beta_0)-L_{\sigma^{(j)}}(\beta_0)=
\frac{2 m_j(\sigma
)\sigma_j}{N}+\frac{1}{N}\sum_{i=1}^N
\bigl\{r\bigl(m_i\bigl(\sigma^{(j)}\bigr)\bigr)-r
\bigl(m_i(\sigma)\bigr)\bigr\}.
\]
Now, by a second order Taylor expansion,
%
%eA.4 #&#
%
\begin{align}
\E_\beta\bigl(L_\sigma(\beta_0)^2
\bigr)=\frac{a_N}{N^2} (T_1+T_2+T_3 ),
\end{align}
where
\[
T_1=\frac{2}{a_N}\sum_{j=1}^Nm_j(
\sigma)^2p_j(\sigma), \qquad T_2=-\frac{2}{a_N}
\sum_{i,j=1}^NJ_N(i,j)
r'\bigl(m_i(\sigma)\bigr)m_j(\sigma)
p_j(\sigma)
\]
and
\[
T_3=\frac{2}{a_N}\sum_{i,j=1}^N
r''\bigl(\theta_{ij}(\sigma)
\bigr)J_N(i,j)^2m_j(\sigma)
\sigma_jp_j(\sigma),
\]
for some $\theta_{ij}(\sigma)$ in the interval $[m_i(\sigma^{(j)}),
m_i(\sigma)]$. Therefore, to prove the lemma, it suffices to control
these three terms.

To control $T_1$, note that
\[
\E_{\beta_0}\bigl(p_j(\sigma)|(\sigma_i)_{i\ne j}
\bigr)=\frac{2}{e^{\beta
_0m_j(\sigma)}+e^{-\beta_0m_j(\sigma)}}=\frac{1}{2}\operatorname{sech}^2\bigl(
\beta_0m_j(\sigma)\bigr),
\]
and $x^2\operatorname{sech}^2(\beta_0 x) \leq M_1 x\tanh(\beta_0x)$ for all
$x\in\R$ for some $M_1=M_1(\beta_0)<\infty$, which gives
%
%eA.5 #&#
%
\begin{eqnarray}
\label{T1} \E_{\beta_0}T_1 &=&\frac{1}{a_N}
\E_{\beta_0}\sum_{j=1}^Nm_j(
\sigma)^2 \operatorname{sech}^2\bigl(\beta_0
m_j(\sigma)\bigr)
\nonumber
\\[-8pt]
\\[-8pt]
\nonumber
& \leq&\frac{M_1}{a_N} \E_{\beta
_0}\sum_{j=1}^Nm_j(
\sigma) \tanh\bigl(\beta_0 m_j(\sigma)\bigr),
\end{eqnarray}
which is finite as $N\rightarrow\infty$ by an application of
Lemma~\ref{lem0}.

Now, let us bound $T_2$. By the Cauchy--Schwarz inequality,
\begin{align*}
|T_2|&\le\frac{2}{a_N} \Biggl\{\sum
_{i=1}^Nr'\bigl(m_i(
\sigma)\bigr)^2 \Biggr\}^{1/2} \Biggl\{ \sum
_{i=1}^N \Biggl(\sum_{j=1}^N
J_N(i, j)m_j(\sigma)\sigma_jp_j(
\sigma) \Biggr)^2 \Biggr\}^{1/2}
\\
& \le\frac{2\llVert J_N\rrVert }{a_N} \Biggl\{\sum_{i=1}^Nr'
\bigl(m_i(\sigma)\bigr)^2 \Biggr\}^{1/2} \Biggl
\{ \sum_{j=1}^N m_j(
\sigma)^2p_j(\sigma)^2 \Biggr
\}^{1/2}.
\end{align*}
Taking expectation on both sides above and using Cauchy--Schwarz
inequality again
%
%eA.6 #&#
%
\begin{align}
\label{t2cs} \E_{\beta_0}|T_2|\leq\frac{2\llVert J_N\rrVert
}{a_N} \Biggl\{
\E_{\beta_0}\sum_{i=1}^N
r'\bigl(m_i(\sigma)\bigr)^2 \cdot
\E_{\beta_0}\sum_{j=1}^N
m_j(\sigma)^2p_j(\sigma) \Biggr
\}^{1/2}.
\end{align}
Now, since $r'(x)^2=\{\tanh(\beta_0x)+\beta_0x \operatorname{sech}^2(\beta
_0x)\}^2\leq M_2 x\tanh(\beta_0x)$, for some constant $M_2=M_2(\beta
_0)$, by Lemma~\ref{lem0}
\[
\limsup_{N\rightarrow\infty}\frac{1}{a_N}\E_{\beta_0}\sum
_{i=1}^Nr'\bigl(m_i(
\sigma)\bigr)^2<\infty.
\]
Using this along with \eqref{T1} in (\ref{t2cs}) gives $\limsup
_{N\rightarrow\infty}\E_{\beta_0}|T_2|<\infty$.

It remains to bound $T_3$. Since $M_3=M_3(\beta_0):=\sup_{x\in\R
}|r''(x)|<\infty$, we have
%
%eA.7 #&#
%
\begin{eqnarray}\label{T3bound}
|T_3|&\leq&\frac{2M_3}{a_N}\sum_{i,j=1}^NJ_N(i,j)^2\bigl|m_j(
\sigma)\bigr|p_j(\sigma)
\nonumber
\\
&\leq& \frac{2M_3}{a_N} \Biggl\{\sum_{j=1}^N
\Biggl(\sum_{i=1}^N J_N(i,
j)^2 \Biggr)^2 \Biggr\}^{1/2} \Biggl\{\sum
_{j=1}^N m_j(\sigma
)^2p_j(\sigma) \Biggr\}^{1/2}
\\
&\leq& \frac{2M_3 \llVert J_N\rrVert }{a_N} \Biggl\{\sum_{i, j=1}^N
J_N(i, j)^2 \Biggr\}^{1/2} \Biggl\{\sum
_{j=1}^N m_j(\sigma)^2p_j(
\sigma) \Biggr\}^{1/2}, \nonumber
\end{eqnarray}
where the last step uses $\sum_{i=1}^N J_N(i,j)^2=\llVert J_N
e_j\rrVert ^2\le
\llVert J_N\rrVert ^2$. Finally, taking expectations on both sides in
(\ref
{T3bound}), and using condition (b) on the first term, and (\ref{T1})
on the second term, gives $\limsup_{N\rightarrow\infty}\E_{\beta
_0}|T_3|<\infty$.

\subsection{Proof of Lemma \protect\ref{derivative}}
\label{pflemderivative}
Fixing $\delta>0$ by Lemma~\ref{lem1}(a) there exists $\varepsilon
=\varepsilon(\delta)>0$ such that
%
%eA.8 #&#
%
\begin{equation}
\P_{\beta_0}\bigl(H_N(\sigma)<3\varepsilon\beta_0
a_N\bigr)\le\delta, \label{c00}
\end{equation}
for $N$ large enough. Also, using Lemma~\ref{2moment} and Chebyshev's
inequality, for $K_1=K_1(\delta):=\sqrt{\frac{C(\beta_0)}{\delta
}}$ we have
%
%eA.9 #&#
%
\begin{eqnarray}
\P_{\beta_0}\bigl(\bigl|L_\sigma(\beta_0)\bigr|>K_1
\sqrt{a_N}/N\bigr)\le\frac
{N^2}{a_N}\E_{\beta_0}
L_\sigma(\beta_0)^2 \le\delta. \label{c1}
\end{eqnarray}

Moreover, by condition (a) in Theorem~\ref{GEN} there exists
$K_2=K_2(\delta)<\infty$ such that for all $N$ large enough we have
\[
\E_{\beta_0}\sum_{i=1}^N\bigl|m_j(
\sigma)\bigr|\pmb1\bigl\{\bigl|m_j(\sigma)\bigr|>K_2\bigr\} \le
\varepsilon\delta\beta_0 a_N
\]
and so by Markov's inequality
%
%eA.10 #&#
%
\begin{eqnarray}
\label{c0} &&\P_{\beta_0} \Biggl(\sum_{i=1}^N\bigl|m_j(
\sigma)\bigr|\pmb1\bigl\{\bigl|m_j(\sigma)\bigr| >K_2\bigr\}>
\varepsilon\beta_0 a_N \Biggr)
\nonumber
\\[-8pt]
\\[-8pt]
\nonumber
&&\qquad\quad \le \frac{\E_{\beta_0}\sum_{i=1}^N|m_j(\sigma)|\pmb1\{
|m_j(\sigma)|>K_2\}}{\varepsilon\beta_0 a_N}\le\delta.
\end{eqnarray}

Defining
\begin{align*}
A_{N}(\delta):={}& \Biggl\{\sigma\in S_N: H_N(
\sigma)\ge3\varepsilon\beta_0 a_N, \bigl|L_\sigma(
\beta_0)\bigr|\le K_1\frac{\sqrt{a_N}}{N},
\\
&\sum_{i=1}^N\bigl|m_j(\sigma)\bigr|
\pmb1\bigl\{|m_j(\sigma)|>K_2\bigr\}> \varepsilon
\beta_0 a_N \Biggr\},
\nonumber
\end{align*}
we have $\P_{\beta_0}(A_N(\delta))\ge1-3\delta$, for $N$ large
enough (by combining (\ref{c00}), (\ref{c1}), and (\ref{c0})).

Now, on the set $A_N(\delta)$ using the bounds $\tanh x\leq x$ on
$x\leq K_2$, and $\tanh x\leq1$ on $x> K_2$,
\begin{align*}
\beta_0\sum_{i=1}^N
m_i(\sigma)^2 \pmb1\bigl\{\bigl|m_i(\sigma)\bigr|\le
K_2\bigr\} +\varepsilon\beta_0 a_N \ge{}&\sum
_{i=1}^N m_i(\sigma)\tanh
\bigl(\beta_0 m_i(\sigma)\bigr)
\\
%=&-\sum_{i=1}^N (\sigma_i-\tanh(\beta_0 m_i(\sigma)))m_i(\sigma)+H_N(
%\sigma)\\
={}&H_N(\sigma)-NL_\sigma(
\beta_0)
\nonumber
\\
\ge{}& 3\varepsilon\beta_0 a_N-K_1
\sqrt{a_N}.
\end{align*}
Thus, on the set $A_{N}(\delta)$,
\[
\sum_{i=1}^N m_i(
\sigma)^2 \pmb1\bigl\{\bigl|m_i(\sigma)\bigr|\le K_2
\bigr\}\ge2\varepsilon a_N-\frac{K_1}{\beta_0}\sqrt{a_N}>
\varepsilon a_N
\]
for all $N$ large, completing the proof.

%sA #&#
\section{Proof of Lemma \protect\ref{TESTING}}
\label{pftesting}

For every $N\geq1$, let $(\sX_N, \cF_N)$ be a measure space and $\P
_N$ and $\Q_N$ two distributions on this measure space. Recall the
definition of Kullback--Leibler divergence $D(\Q_N\Vert \P_N)$ from
\eqref
{kldivg}, and consider the problem of testing $\P_N$ versus $\Q_N$
such that condition \eqref{divlemma} holds. Since $D(\Q_N\Vert \P
_N)=\mathbb{E}_{\Q_N}L_N$, by assumption (\ref{divlemma})
%
%eA.1 #&#
%
\begin{equation}
0\leq\mathbb{E}_{\Q_N}L_N=\mathbb{E}_{\Q_N}L_N^+-
\mathbb{E}_{\Q
_N}L_N^-\leq M_1, \label{likelihood}
\end{equation}
for some $M_1<\infty$ and all large $N$. Also, there exists
$M_2<\infty$ such that $\mathbb{E}_{Q_N}L_N^-\leq M_2$, for all~$N$.
To see this, note that
%
%eA.2 #&#
%
\begin{align}\label{ln}
\mathbb{E}_{\Q_N}L_N^{-}={}&-\sum
_{s=1}^\infty\mathbb{E}_{\Q
_N}L_N
\pmb1\{-s\leq L_N< -s+1\}
\nonumber
\\
\leq{}&\sum_{s=1}^\infty s e^{-(s-1)}
\P_N(-s\le L_N<-s+1)
\\
\leq{}&\sum_{s=1}^\infty se^{-(s-1)}:=M_2<
\infty. \nonumber
\end{align}

Hence, by (\ref{likelihood}) and (\ref{ln}), $\mathbb{E}_{\Q
_N}|L_N|=\mathbb{E}_{\Q_N}L_N^++\mathbb{E}_{\Q_N}L_N^-\leq
M_1+2M_2=:M<\infty$. Therefore, by Markov's inequality, for any
$\varepsilon>0$
\[
\Q_N\bigl(|L_N|>M/\varepsilon\bigr)\le\frac{\varepsilon}{M}
\E_{\Q_N} \bigl(|L_N|\bigr)\le\varepsilon.
\]
Now, suppose there exists a sequence of test functions $\phi_N$ such
that $\E_{\P_N}\phi_N\rightarrow0$. Then
\begin{align*}
\E_{\Q_N}\phi_N\le\Q_N\bigl( |L_N|>M/
\varepsilon\bigr)+\E_{\Q_N} \bigl(\phi_N\pmb1\bigl\{|L_N|
\le M/\varepsilon\bigr\} \bigr)\le\varepsilon+e^{M/\varepsilon}\E_{\P_N}
\phi_N.
\end{align*}
Taking limits on both sides gives, $\limsup_{N\rightarrow\infty}\E
_{\Q_N}\phi_N\le\varepsilon$. Since $\varepsilon>0$ is arbitrary\break
$\lim_{N\rightarrow\infty}\E_{\Q_N}\phi_N=0$, that is, $\phi_N$
is not a consistent sequence of test functions.

%sA #&#
\section{The mean-field approximation}

A standard technique to derive a lower bound on the log-partition
function is the mean-field approximation (refer to \cite
{dembo_montanari} for details). Here, we give a short proof for the
sake of completeness.

%leA.1 #&#
%
\begin{lem}\label{meanfield} Consider the family of probability
distributions on $S_N$ given by (\ref{pmf}). Then for any matrix
%
%eA.1 #&#
%
\begin{equation}
F_N(\beta)\ge\sup_{\it{\mathbf{z}}\in[-1,1]^N} \Biggl\{
\frac
{\beta}{2} \it{\mathbf{z}}'J_N\it{\mathbf{z}}-
\sum_{i=1}^NI(z_i) \Biggr\},
\label{mfieldb}
\end{equation}
where $I(x)=\frac{1}{2} [(1+x)\log(1+x)+(1-x)\log(1-x) ]$
for $x\in[-1,1]$.
\end{lem}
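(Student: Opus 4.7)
\medskip

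\noindent\textbf{Proof plan for Lemma~\ref{meanfield}.} The plan is to use the Gibbs variational principle and then restrict the supremum to product measures on $S_N$. First I would establish the general lower bound
\[
F_N(\beta)\ \geq\ \frac{\beta}{2}\,\E_Q\bigl[\sigma' J_N \sigma\bigr]\ -\ D(Q\,\|\,\P_0)
\]
valid for every probability measure $Q$ on $S_N$ absolutely continuous with respect to $\P_0$. This is a one-line consequence of Jensen's inequality: writing
\[
e^{F_N(\beta)} \;=\; \E_0\bigl[e^{\frac{\beta}{2} H_N(\sigma)}\bigr] \;=\; \E_Q\!\left[\frac{d\P_0}{dQ}(\sigma)\,e^{\frac{\beta}{2} H_N(\sigma)}\right],
\]
taking logarithms and pulling the expectation inside with Jensen gives the stated inequality.

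Next I would restrict attention to product measures. For $\mathbf{z}=(z_1,\ldots,z_N)\in[-1,1]^N$, let $Q_{\mathbf{z}}:=\bigotimes_{i=1}^N Q_{z_i}$ where $Q_{z_i}$ is the law on $\{-1,+1\}$ with mean $z_i$, i.e.\ $Q_{z_i}(\sigma_i=\pm 1)=(1\pm z_i)/2$. Under $Q_{\mathbf{z}}$ the coordinates are independent, and since the diagonal of $J_N$ vanishes,
\[
\E_{Q_{\mathbf{z}}}\bigl[\sigma' J_N \sigma\bigr] \;=\; \sum_{i\ne j} J_N(i,j)\,\E_{Q_{z_i}}\sigma_i\cdot \E_{Q_{z_j}}\sigma_j \;=\; \mathbf{z}' J_N \mathbf{z}.
\]
A direct computation of the KL divergence on each coordinate,
\[
D(Q_{z_i}\,\|\,\mathrm{Unif}\{\pm 1\}) \;=\; \tfrac{1+z_i}{2}\log(1+z_i)+\tfrac{1-z_i}{2}\log(1-z_i) \;=\; I(z_i),
\]
combined with the tensorization identity $D(Q_{\mathbf{z}}\,\|\,\P_0)=\sum_{i=1}^N I(z_i)$, yields
\[
F_N(\beta)\ \geq\ \frac{\beta}{2}\,\mathbf{z}' J_N \mathbf{z}\ -\ \sum_{i=1}^N I(z_i).
\]
Taking the supremum over $\mathbf{z}\in[-1,1]^N$ gives \eqref{mfieldb}.

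There is no real obstacle: the only thing to double-check is that the parametrization $\mathbf{z}\mapsto Q_{\mathbf{z}}$ ranges over all product measures on $S_N$ (it does, since a distribution on $\{-1,+1\}$ is determined by its mean), and that the diagonal-zero assumption on $J_N$ is used to replace $\E_{Q_{\mathbf{z}}}[\sigma_i\sigma_j]$ by $z_iz_j$ without correction. Everything else is a direct calculation.
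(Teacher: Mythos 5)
Your proof is correct and follows essentially the same route as the paper: both arguments are the Gibbs variational principle (your Jensen step via change of measure is equivalent to the paper's use of $D(\nu\,\|\,\P_\beta)\ge 0$), followed by restriction to product measures, where the zero diagonal gives $\E_{Q_{\mathbf z}}[\sigma'J_N\sigma]=\mathbf z'J_N\mathbf z$ and the entropy term tensorizes to $\sum_{i=1}^N I(z_i)$. No gaps.
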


\begin{proof}
Let $D(\cdot\Vert \cdot)$ be the Kullback--Leibler divergence
between two
probability measures. By a direction computation, for any probability
mass function $\nu$ on $S_N=[-1, 1]^N$ we have
\begin{align*}
D(\nu\Vert\P_\beta)=F_N(\beta)+N\log2+\E_{\nu}
\log\nu(\sigma)-\frac{1}{2}\E_{\nu}H_N(\sigma).
\end{align*}
Now, since $D(\nu\Vert \P_\beta)\ge0$ we have
\[
F_N(\beta)\ge\frac{\beta}{2}\E_{\nu}H_N(
\sigma)-\E_{\nu}\log\nu(\sigma)-N\log2.
\]
One can obtain a lower bound on $F_N(\be)$ by taking supremum in LHS
over product measures, that is $\nu(\sigma)=\prod_{i=1}^N \nu
_i(\sigma_i)$. Hence, setting $z_i=\E_{\nu_i}\sigma=\nu_i(1)-\nu
_i(-1)\in[-1,1]$, the bound in \eqref{mfieldb} follows.
\end{proof}

%sA #&#
\section{Proof of Proposition \protect\ref{PARTITION}}
\label{pfpartition}

We begin by deriving the MGF of the limiting distribution \eqref
{limitdense}. The proof involves straightforward calculations using the
MGF of the chi-squared distribution, similar to \cite{BDM}, Proposition~7.1.

%leA.1 #&#
%
\begin{lem}\label{lem:exists}
Let $\{a_i\}_{i\ge1},\{b_i\}_{i\ge1}$ be a sequence of real numbers
such that $\sum_{i=1}^\infty a_i^2<\infty$ and $\sum_{i=1}^\infty
(a_i-b_i)=\mu$ for some finite real number $\mu$. Suppose $\xi_1,\xi
_2, \ldots$ be i.i.d. $\chi_1^2$ random variables.
\begin{enumerate}[(a)]
\item[(a)] Then the sum $S:=\frac{1}{2}\sum_{i=1}^\infty(a_i\xi_i-b_i)$
converges almost surely and in $L^1$.

\item[(b)] Moroever, if $M:=\sup_{i\ge1}|a_i| < \infty$, then for
$0<t<\frac{1}{ M}$,
%
%eA.1 #&#
%
\begin{equation}
\E e^{\frac{1}{2}t\sum_{i=1}^\infty(a_i\xi_i-b_i)}=\prod
_{i=1}^\infty
\frac{e^{-\frac{1}{2}t b_i}}{\sqrt{1-ta_i}}. \label{mgf}
\end{equation}
\end{enumerate}
\end{lem}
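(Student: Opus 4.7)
\textbf{Proof proposal for Lemma \ref{lem:exists}.}

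For part (a), the plan is to decompose each summand as $a_i\xi_i-b_i = a_i(\xi_i-1)+(a_i-b_i)$, which separates the random fluctuation from the deterministic correction. The partial sum
\[
S_n = \tfrac{1}{2}\sum_{i=1}^{n}a_i(\xi_i-1) + \tfrac{1}{2}\sum_{i=1}^{n}(a_i-b_i)
\]
then splits into a random piece $T_n := \tfrac{1}{2}\sum_{i=1}^n a_i(\xi_i-1)$ and a deterministic piece whose limit is $\mu/2$ by hypothesis. Since $\xi_i-1$ are i.i.d.\ centered with variance $2$, the sequence $\{T_n\}$ is an $L^2$-bounded martingale with $\operatorname{Var}(T_n) = \tfrac{1}{2}\sum_{i=1}^n a_i^2 \le \tfrac{1}{2}\sum_{i=1}^\infty a_i^2 < \infty$. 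Kolmogorov's two-series theorem (or the $L^2$-martingale convergence theorem) gives almost sure and $L^2$ convergence of $T_n$; together with the trivial convergence of the deterministic piece, this yields a.s.\ and $L^1$ convergence of $S_n$ to $S$.

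For part (b), the starting point is the finite-$n$ identity obtained from independence and the $\chi^2_1$ MGF $\E e^{s\xi_i}=(1-2s)^{-1/2}$ (valid for $s<1/2$): for every $n$ and every $t$ with $0<t<1/M$,
\[
\E e^{\frac{t}{2}\sum_{i=1}^{n}(a_i\xi_i-b_i)} = \prod_{i=1}^{n}\frac{e^{-tb_i/2}}{\sqrt{1-ta_i}}.
\]
It remains to pass to the limit on both sides. The main step is to show that the infinite product on the right converges. Writing its logarithm as
\[
\sum_{i=1}^{n}\Bigl[-\tfrac{t b_i}{2} - \tfrac{1}{2}\log(1-ta_i)\Bigr] = \tfrac{t}{2}\sum_{i=1}^{n}(a_i-b_i) + \tfrac{1}{2}\sum_{i=1}^{n}\bigl[-\log(1-ta_i) - ta_i\bigr],
\]
the first sum tends to $t\mu/2$ by hypothesis, while the second converges absolutely since $\bigl|{-\log(1-x)-x}\bigr|\le C_{c}x^2$ for $|x|\le c<1$ and $|ta_i|\le tM<1$, so each term is $O(a_i^2)$ and $\sum a_i^2<\infty$.

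With the product shown to converge, I would close the argument by uniform integrability. Pick $t'\in(t,1/M)$ and apply the same finite-$n$ MGF identity at $t'$; the reasoning above shows $\sup_n \E e^{\frac{t'}{2}S_n} < \infty$, so the family $\{e^{\frac{t}{2}S_n}\}_n$ is uniformly integrable (since $\E [e^{\frac{t}{2}S_n}]^{1+\varepsilon}$ is bounded for $\varepsilon = t'/t - 1$). Combined with the a.s.\ convergence $S_n\to S$ from part (a), this yields $\E e^{\frac{t}{2}S_n} \to \E e^{\frac{t}{2}S}$, and equating with the limit of the explicit product gives \eqref{mgf}.

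The only delicate step is the convergence of the infinite product: without the assumption $\sum(a_i-b_i)=\mu$ it would fail in general, and both the hypothesis on $\sum(a_i-b_i)$ and the $\ell^2$-summability of $\{a_i\}$ are genuinely needed (the former to control the linear part, the latter to control the $O(a_i^2)$ tail in the Taylor expansion of $\log$). Everything else---the $L^2$-martingale argument in (a) and the UI-based passage to the limit in (b)---is standard once that estimate is in hand.
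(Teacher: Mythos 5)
Your proposal is correct and follows essentially the same route as the paper: an $L^2$-boundedness argument for part (a), and the finite-$n$ MGF identity plus uniform integrability via evaluating the MGF at some $t'\in(t,1/M)$ for part (b), with the same Taylor estimate $-\log(1-x)-x=O(x^2)$ controlling both the infinite product and the UI bound. Your treatment of (a) is in fact slightly cleaner than the paper's: by splitting $a_i\xi_i-b_i=a_i(\xi_i-1)+(a_i-b_i)$ you correctly identify the martingale as $T_n=\tfrac12\sum a_i(\xi_i-1)$, whereas the paper loosely asserts that $S_N$ itself is a martingale, which is only true when $a_i=b_i$ for all $i$; your decomposition fixes this without changing the conclusion.
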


\begin{proof}By defining $S_N:=\frac{1}{2}\sum_{i=1}^N (a_i\xi_i-b_i)$
and $\sF_N:=\sigma(\{\xi_j\}_{j=1}^N)$, it follows that $(S_N, \sF
_N)$ is a martingale, with
\[
\limsup_{N}\E S_N^2=
\frac{1}{4} \Biggl(\mu^2+\sum_{j=1}^\infty
a_j^2 \Biggr)<\infty,
\]
and so $S_N$ converges almost surely and in $L^1$ \cite{durrett}.

To compute the moment generating function of $S$, first note that
$e^{tS_N}\stackrel{\sP}{\rightarrow} e^{tS}$. Thus if the collection
of random variables $\{e^{tS_N}\}$ is uniformly integrable, then we have
\begin{align*}
\E e^{tS}=\lim_{N\rightarrow\infty}\E e^{tS_N}=\lim
_{N\rightarrow
\infty}\prod_{i=1}^N
\frac{e^{-\frac{1}{2}tb_i}}{\sqrt
{1-ta_i}}=\prod_{i=1}^\infty
\frac{e^{-\frac{1}{2}tb_i}}{\sqrt{1-ta_i}},
\end{align*}
thus completing the proof of the lemma. It thus remains to prove
uniform integrability, for which it suffices to show that for some
$\delta>0$ we have
$\limsup_{N\rightarrow\infty}\E e^{(t+\delta)S_N}<\infty$. Since
$t<\frac{1}{M}$ there exists $\delta>0$ such that $t+\delta<\frac
{1}{M}$. For this $\delta$ setting $t':=t+\delta$ we have
%
%eA.2 #&#
%
\begin{align}
\label{eq:bound} \log\E e^{t'S_N}=\frac{1}{2}\sum
_{i=1}^N\bigl\{-t'b_i-\log
\bigl(1-t'a_i\bigr)\bigr\}.
\end{align}
Now setting $C:=\sup_{|x|\le t'M}\frac{-\log(1-x)-x}{x^2}<\infty$
we have
$-\log(1-x)-x\le Cx^2$ for $|x|< t'M$, and so the RHS of \eqref
{eq:bound} can be bounded by
$
\frac{1}{2}\sum_{i=1}^N\{t'(a_i-b_i)+Ct'^2a_i^2\}$,
which converges to $e^{t'\mu+Ct'^2\sum_{i=1}^\infty a_i^2}$.
Therefore, $e^{t S_N}$ is uniformly integrable, thus completing the
proof of the lemma.
\end{proof}

The above lemma can be used to complete the proof of Proposition~\ref
{PARTITION}. To this end, let $W_N:=A(G_N)$. Then, by \cite{BDM}, Theorem~1.4, it follows that 
\[
\frac{1}{N}{\sigma}'W_N{\sigma} \dto
\sum_{i=1}^\infty\lambda_i(W) (
\xi_i-1),
\]
where $\xi_1,\xi_2,\ldots,$ are i.i.d. $\chi_1^2$ random variables. Thus,
%
%eA.3 #&#
%
\begin{align}
\label{uniform_integrable} \exp\biggl\{ \frac{\beta}{2}\cdot\frac
{1}{N}{
\sigma}'W_N{\sigma} \biggr\} \dto\exp\Biggl\{
\frac{\beta}{2} \sum_{i=1}^\infty
\lambda_i(W) (\xi_i-1) \Biggr\}.
\end{align}
If the LHS in \eqref{uniform_integrable} is uniformly integrable, then
%
%eA.4 #&#
%
\begin{eqnarray}
\lim_{N\rightarrow\infty}\E\exp\biggl\{ \frac{\beta}{2}\cdot
\frac{1}{N} {\sigma}'W_N{\sigma} \biggr
\}&=&\E\exp\Biggl\{ \frac{\beta}{2} \sum_{i=1}^\infty
\lambda_i(W) (\xi_i-1) \Biggr\}
\nonumber
\\[-8pt]
\\[-8pt]
\nonumber
&=&\prod_{i=1}^\infty\frac{e^{-\frac{1}{2}\beta\lambda
_i(W)}}{\sqrt{1-\beta\lambda_i(W) }},
\end{eqnarray}
where the last equality uses Lemma~\ref{lem:exists}. The proof of part
(a) then follows on taking log of both sides of the above equality.

It remains to show that the LHS in \eqref{uniform_integrable} is
uniformly integrable, that is,
%
%eA.5 #&#
%
\begin{align}
\limsup_{N\rightarrow\infty}\log\E_{0}\exp\biggl\{
\frac{\beta
+\delta}{2}\cdot\frac{1}{N} \sigma'W_N
\sigma\biggr\}=\lim_{N\rightarrow\infty}F_N(\beta+\delta)<\infty,
\label{unint}
\end{align}
for some $\delta>0$. To this end, note that if $0<\beta<1/\llVert
W\rrVert $,
there exists $\delta>0$ such that $\gamma:=\beta+\delta<1/\llVert
W\rrVert $.
Now, using \eqref{eq:upper_bound} and the fact $\sum_{i=1}^N\lambda
_i(G_N)=0$, we have
%
%eA.6 #&#
%
\begin{align}
\label{uint_2} F_N(\gamma) \le& \sum_{i=1}^N
\biggl\{-\frac{1}{2}\log\biggl(1- \frac{\gamma\lambda_i(W_N)}{N}
\biggr)-
\frac{\gamma}{2} \cdot\frac{\lambda_i(W_N)}{N} \biggr\}.
\end{align}
Since $W_N \Rightarrow W$ in the cut metric, $\lim_{N\rightarrow
\infty}\frac{\gamma\lambda_i(W_N)}{N}=\gamma\llVert W\rrVert
<1$, and so
there exists $\varepsilon>0$ such that for all $N$ large enough $\frac
{\gamma\lambda_i(W_N)}{N}\leq1-\varepsilon$. For $x\le
1-\varepsilon$ there exists $M=M(\varepsilon)$ such that $-\log
(1-x)-x\le Mx^2$. Using this the RHS of \eqref{uint_2} can be bounded
by $\frac{M\gamma^2\sum_{i=1}^N\lambda_i^2(W_N)}{N^2}$ which
converges to $M\gamma^2\llVert W\rrVert _2^2=M\gamma^2$, as
$N\rightarrow\infty
$. This proves (\ref{unint}) and completes the proof of the proposition.
\end{appendix}

\section*{Acknowledgements}
The authors thank Sourav Chatterjee, Persi Diaconis, and
Qingyuan Zhao for helpful discussions. The authors also thank the
anonymous referees for their careful comments which improved the
presentation of the paper.

% imsref loaded by akundreckaite, 2016-09-12 14:03:24
%

%\begin{appendix}
%\section{}
%\end{appendix}

% zodis "Acknowledgments" paliekamas pagal autoriu
%\section*{Acknowledgements}

%\begin{supplement}%[id=suppA]
%\sname{Supplement A}
%\stitle{}
%\slink[doi]{10.3150/00-BEJXXXXSUPP} %[doi,text={...}] - jei reikia
%suskaldyti doi
%\sdatatype{.pdf}
%\sfilename{BEJ000\_supp.pdf}
%\sdescription{}
%\end{supplement}

%\begin{thebibliography}{00}
%\bibitem[\protect\citeauthoryear{}{()}]{r1}
%\bibitem{r1}
%\end{thebibliography}

%\printhistory
\end{document}